\newtheorem{theorem}{Theorem}
\newtheorem{lemma}[theorem]{Lemma}
\newtheorem{remark}[theorem]{Remark}
\newenvironment{proof}[1][Proof]{\noindent\textbf{#1.} }{\ \rule{0.5em}{0.5em}}
\begin{document}

\title{New error estimates of Lagrange-Galerkin methods for the advection
equation}
\author{Rodolfo Bermejo$^{a}$, Jaime Carpio$^{b}$, Laura Saavedra$^{c}$ \\
{\small $a$) Dpto. Matem\'{a}tica Aplicada a la Ingenier\'{\i}a Industral
ETSII. Universidad Polit\'ecnica de Madrid. }\\
[0pt] {\small $b$) Dpto. Ingenier\'ia Energ\'etica ETSII. Universidad
Polit\'ecnica de Madrid. }\\
[0pt] {\small $c$) Dpto. Matem\'{a}tica Aplicada a la Ingenier\'{\i}a
Aeroespacial, ETSIAE. Univversidad Polit\'ecnica de Madrid.}}
\date{}
\maketitle

\begin{abstract}
We study in this paper new developments of the Lagrange-Galerkin method for
the advection equation. In the first part of the article we present a new
improved error estimate of the conventional Lagrange-Galerkin method. In the
second part, we introduce a new local projection stabilized
Lagrange-Galerkin method, whereas in the third part we introduce and analyze
a discontinuity-capturing Lagrange-Galerkin method. Also, attention has been
paid to the influence of the quadrature rules on the stability and accuracy
of the methods via numerical experiments.
\end{abstract}


%


\textit{Keywords} Advection equation, Lagrange-Galerkin, finite elements,
local projection stabilization,\newline
discontinuity capturing 

\textit{Mathematics Subject Classification (2010)} 65M12, 65M25, 65M60, 65M50

\section{Introduction}

We consider the Cauchy problem for the pure advection equation%
\begin{equation}
\left\{
\begin{array}{l}
\dfrac{\partial c}{\partial t}+\mathbf{u\cdot }\nabla c=0,\ \ x\in \mathbb{R}%
^{d},\ t>0,\  \\
\\
c(x,0)=c^{0}(x),%
\end{array}%
\right.  \label{eq0}
\end{equation}%
where $c:\mathbb{R}^{d}\times \lbrack 0,T]\rightarrow \mathbb{R}$, $\mathbf{u%
}:\mathbb{R}^{d}\times \lbrack 0,T]\rightarrow \mathbb{R}^{d}$ is a
vector-valued function and $c^{0}(x)$ is a function of compact support
defined in a domain $D_{0}\subset \mathbb{R}^{d}$. It is well known that the
solution of this problem given by the method of characteristics is of the
form%
\begin{equation*}
c(X\mathbf{(}\cdot \mathbf{,\ }t;t+\tau ),t+\tau )=c(\cdot ,t),
\end{equation*}%
$X\mathbf{(}x\mathbf{,\ }t;t+\tau )$ being the characteristic curves of the
advection equation. In this paper, we present the error analysis of three
versions of the so called LG method applied to solve the Cauchy problem (\ref%
{eq0}), which represent numerical realizations of the theoretical method of
characteristics in the framework of $H^{1}$-conforming finite elements. The
first version, denoted in this paper with the name the conventional LG
method, consists basically on approximating the solution $c(x,t)$ by the $%
L^{2}$-projection onto the finite element space; see, for instance, \cite{Pi}%
, \cite{MPS} and \cite{john}. The conventional LG method can be viewed as a
kind of high order upwind method that introduces artificial diffusion in the
discrete formulation, thus providing good stability properties to the
numerical solution; however, as numerical experiments show, this artificial
diffusion is not high enough to suppress the oscillations that appear at
discontinuities of the exact solution. In order to alleviate this problem at
discontinuities and extend the stability properties, we shall study a local
projection stabilized Lagrange-Galekin (LPS-LG) method and a
discontinuity-capturing Lagrange-Galerkin (DC-LG) method.

In the past, several authors have obtained different estimates for the $%
L^{2} $-norm of the error of the conventional LG method. For example, \cite%
{Pi} calculates an estimate of the form $O(h^{m+1}/\Delta t)$, where $m$
denotes the degree of the polynomials of the $H^{1}$-conforming finite
element spaces, $h$ being the largest diameter of the elements of the
spatial mesh and $\Delta t$ the size of the time step. The problem with this
estimate is that for fixed $h$ the error becomes unbounded when $\Delta
t\rightarrow 0$. \cite{MPS} removes the $\Delta t^{-1}$ dependence from the
error estimate obtaining the new estimate $O(h^{m})$, this estimate allows
to prove convergence of LG method for the advection equation when $\Delta
t\rightarrow 0$ independently of $h$. \cite{john} improves the estimate of
\cite{Pi} calculating a new estimate $O(h^{m+1}/\Delta t^{1/2})$, which for $%
\Delta t=O(h)$ implies that the error of the conventional LG method is of
the same order as both the streamline-diffusion (SD) method formulated in
the framework of space-time finite elements continuous in space and
discontinuous in time and the characteristic streamline-diffusion (CSD)
method, the latter method being a version of the SD method that uses
space-time meshes oriented along the characteristic curves of the advection
equation. Later on, \cite{MS} calculates a new estimate of the form $O(\min
(1,\left\Vert \mathbf{u}\right\Vert _{L^{\infty }(\mathbb{R}^{d})^{d}}{%
\Delta t}/{h})h^{m+1}/{\Delta t})$, where $\left\Vert \mathbf{u}\right\Vert
_{L^{\infty }((0,T);\mathbb{R}^{d})^{d}}$ denotes the supremum norm of the
velocity vector $\mathbf{u}(x,t)$; then, considering that $\left\Vert
\mathbf{u}\right\Vert _{L^{\infty }(\mathbb{R}^{d})^{d}}{\Delta t}/{h}$ is
the CFL number, we can say that for CFL numbers less than one, the error of
LG methods is $O(h^{m})$, whereas for CFL numbers larger than one the error
is $O(h^{m+1}/{\Delta t})$. Numerical examples show that the latter estimate
provides a better description of the error behavior of the conventional LG
method than the other estimates do. In this paper, we revisit the results of
the the above mentioned authors and calculate an improved new error estimate
of the form $O(\min (1,\left\Vert \mathbf{u}\right\Vert _{L^{\infty }(%
\mathbb{R}^{d})^{d}}{\Delta t}^{1/2}/{h})h^{m+1}/{\Delta t}^{1/2})$. Some
numerical examples will support the validity of this estimate.

Local projection stabilized methods have become quite popular for
advection-diffusion-reaction equations, including Navier-Stokes equations,
see \cite{BB}, \cite{BCS}, \cite{BS3}, \cite{GT} and \cite{RST} just to cite
a few, for they are symmetric and introduce artificial diffusivity via a
fluctuation operator acting on the small unresolved scales. We prove that
the LPS-LG method is stable in the $L^{2}$-norm, and our error analysis
shows that the error of the LPS-LG method in the mesh dependent norm (to be
defined below)%
\begin{equation*}
\max_{0\leq t_{n}\leq T}\left\vert \left\vert \left\vert
c^{n}-c_{h}^{n}\right\vert \right\vert \right\vert =O(h^{\beta }+\min
(1,\left\Vert \mathbf{u}\right\Vert _{L^{\infty }(\mathbb{R}^{d})^{d}}{%
\Delta t}^{1/2}/{h})h^{m+1}/{\Delta t}^{1/2}),
\end{equation*}%
where $\beta $ is a coefficient depending on $m$. However, despite the
introduction of the artificial diffusivity, numerical tests show that the
LPS-LG method may exhibit an oscillatory behavior when the solution is not
sufficiently smooth.

The DC-LG method might be viewed as a version of the shock capturing CSD
method \cite{john} in which the mesh alignment along the characteristic
curves and the stream diffusion mechanism of the shock capturing CSD are
removed; in fact, one can consider that the DC-LG method is a reformulation,
in the framework of the conventional LG method, of the residual artificial
viscosity method introduced in \cite{Naz}. We prove that DC-LG method is
stable in the $L^{2}$-norm, regardless the degree of the finite element
spaces, and also in the $L^{\infty }$-norm with linear finite elements,
although numerical examples show $L^{\infty }$-norm stability with quadratic
polynomials in the presence of a strong discontinuity. For solutions
sufficiently smooth, we are able to prove that the error in the $L^{2}$-norm
is of the form $O((h^{m+1}+C_{\varepsilon }h^{\alpha })/(\Delta t^{1/2}))$, $%
\alpha $ and $C_{\varepsilon }$ being positive constants.

The theoretical analysis of LG methods presented in this paper are proven
under the assumption that the integrals $\int_{K}\phi
_{j}(X(x,t_{n};t_{n-1}))\phi _{i}(x)dx$ , which appear in the formulation of
the methods, are calculated exactly; here, $K$ is a generic element of the
mesh, $\phi _{i}$ is the ith global basis function of the finite element
space and $X(x,t_{n},t_{n-1})$ is the foot of the characteristic curve
associated with the point $x$. Noting that the integrand is the product of
two piecewise continuous polynomial functions defined on two different
meshes, it may become very difficult to calculate such integrals exactly, so
one has to resort to quadrature rules; but as \cite{MPS} and \cite{jack}
show, the quadrature rules have to be of high order because otherwise the
numerical solution may become either inaccurate or unstable. Being aware of
this fact, we shall test the validity of our analysis of the LG methods
studied in the paper by performing some benchmark numerical tests, using
symmetric Gaussian rules of different orders to assess the influence of the
order of the quadrature rules on the accuracy and stability.

The paper is organized as follows. We make a short presentation of the
continuous problem in Section 2, and introduce the formulation and numerical
analysis of the conventional LG method for the advection equation in Section
3. Some numerical examples illustrating its performance are also reported in
this section. Section 4 is devoted to the formulation, analysis and
numerical performance of the LPS-LG method. The DC-LG method is introduced
in Section 5, studying its stability and convergence. We also present in
this section several numerical tests. Some concluding remarks are written in
Section 6.

We introduce some notation about the functional spaces used in the paper.
For $s\geq 0$ real and real $1\leq p\leq \infty $, $W^{s,p}(D)$ denotes the
real Sobolev spaces defined on $D$ for scalar real-valued functions. $%
\left\Vert \cdot \right\Vert _{W^{s,p}(D)}$ and $\left\vert \cdot
\right\vert _{W^{s,p}(D)}$ denote the norm and semi-norm, respectively, of $%
W^{s,p}(D)$. When $s=0$, $W^{0,p}(D):=L^{p}(D)$. For $p=2$, the spaces $%
W^{s,2}(D)$ are denoted by $H^{s}(D)$, which are real Hilbert spaces with
inner product $(\cdot ,\cdot )_{s}$. For $s=0$, $H^{0}(D):=L^{2}(D)$, the
inner product in $L^{2}(D)$ is denoted by $(\cdot ,\cdot )$. $H_{0}^{1}(D)$
is the space of functions of $H^{1}(D)$ which vanish on the boundary $%
\partial D$ in the sense of trace. $H^{-1}$ denotes the dual space of $%
H_{0}^{1}(D) $.\ The corresponding spaces of real vector-valued functions, $%
v:D\rightarrow \mathbb{R}^{d}$ are denoted by $W^{s,p}(D)^{d}:=\{v:D%
\rightarrow \mathbb{R}^{d}:\ v_{i}\in W^{s,p}(D),\ 1\leq i\leq d\}$. Let $X$
be a real Banach space $(X,\left\Vert \cdot \right\Vert _{X})$, if $%
v:(0,T)\rightarrow X$ is a strongly measurable function with values in $X$,
we set $\left\Vert v\right\Vert _{L^{p}(0,t;X)}=\left(
\int_{0}^{t}\left\Vert v(\tau )\right\Vert _{X}^{p}d\tau \right) ^{1/p}$ for
$1\leq p<\infty $, and $\left\Vert v\right\Vert _{L^{\infty }(0,t;X)}= %
\displaystyle \text{ess}\sup_{0<\tau \leq t}\left\Vert v(\tau )\right\Vert
_{X}$; when $t=T$, we shall write, unless otherwise stated, $\left\Vert
v\right\Vert _{L^{p}(X)}$. We shall also use the following discrete norms:%
\begin{equation*}
\left\Vert v\right\Vert _{l^{p}(X)}=\left( \Delta t\sum_{i=1}^{N}\left\Vert
v(\tau _{i})\right\Vert _{X}^{p}\right) ^{1/p}\text{, \ }\left\Vert
v\right\Vert _{l^{\infty }(X)}=\max_{1\leq i\leq N}\left\Vert v(\tau
_{i})\right\Vert _{X},
\end{equation*}%
corresponding to the time discrete space $l^{p}(X)\equiv l^{p}(0,T;X)$, $%
1\leq p<\infty $, defined as
\begin{equation*}
l^{p}(0,T;X):=\left\{ v:(0,t_{1},t_{2},\ldots ,t_{N}=T)\rightarrow X:\ \
\left\Vert v\right\Vert _{l^{p}(X)}<\infty \right\} ,
\end{equation*}%
when $p=\infty$
\begin{equation*}
l^{\infty }(0,T;X):=\left\{ v:(0,t_{1},t_{2},\ldots ,t_{N}=T)\rightarrow X:\
\max_{1\leq i\leq N}\left\Vert v(\tau _{i})\right\Vert _{X}<\infty \right\}.
\end{equation*}%
Finally, we shall also use the space of continuous functions such as $%
C^{r}(D )$ that denotes the space of $r$-times continuously differentiable
functions on $D$, when $r=0$ we write $C(D)$ instead of $C^{0}(D)$; the
space $C^{r,1}(\overline{D})$, $r\geq 0$, of functions defined on the
closure of $D$, $r$ -times continuously differentiable and with the $r$th
derivative being Lipschitz continuous; and the space of continuous and
bounded functions in time with values in $X$ denoted by $C([0,T];X)$.

Throughout this paper, $C$ will denote a generic positive constant which is
independent of both the space and time discretization parameters $h$ and $%
\Delta t$ respectively. $C$ will have different values at different places
of appearance. In many places we shall use, without making any explicit
statement, the Cauchy's inequality $ab\leq \frac{\epsilon }{2}a^{2}+\frac{1}{%
2\epsilon }b^{2}\ (a,\ b>0,\ \epsilon >0)$, and the discrete Gronwall
inequality presented in \cite{HR}.

\section{The Cauchy problem for the advection equation}

To introduce the LG method we consider the Cauchy problem for the first
order linear hyperbolic equation%
\begin{equation}
\left\{
\begin{array}{l}
\dfrac{\partial c}{\partial t}+\mathbf{u\cdot }\nabla c=0,\ \ x\in \mathbb{R}%
^{d},\ t>0,\  \\
\\
c(x,0)=c^{0}(x),%
\end{array}%
\right.  \label{eq1}
\end{equation}%
where $c:\mathbb{R}^{d}\times \lbrack 0,T]\rightarrow \mathbb{R}$, $\mathbf{u%
}:\mathbb{R}^{d}\times \lbrack 0,T]\rightarrow \mathbb{R}^{d}$ is a
vector-valued function and $c^{0}(x)$ is a function of compact support
defined in a domain $D_{0}\subset \mathbb{R}^{d}$. Considering the
characteristics curves of the first order differential operator $%
D/Dt:=\partial /\partial t+\mathbf{u\cdot }\nabla $ which are the solution
to the system of ordinary differential equations
\begin{equation}
\left\{
\begin{array}{l}
\dfrac{dX\mathbf{(}x\mathbf{,}s;t)}{dt}=\mathbf{u}(X(x,s;t),t), \\
\\
X\mathbf{(}x,s;s)=x\mathbf{,}%
\end{array}%
\right.  \label{eq2}
\end{equation}%
we can recast problem (\ref{eq1}) as an ordinary differential equation along
the characteristics curves, $X\mathbf{(}x\mathbf{,}s;t)$, of the form%
\begin{equation}
\left\{
\begin{array}{l}
\dfrac{Dc}{Dt}=0\text{ ,\ \ }X\mathbf{(}x\mathbf{,}s;t)\in \mathbb{R}^{d}%
\text{,\ }t>0,\text{\ } \\
\\
c(X\mathbf{(}x\mathbf{,}0;0),0)=c^{0}(x\mathbf{)}.%
\end{array}%
\right.  \label{eq3}
\end{equation}%
Assuming that $\mathbf{u}\in C([0,T],W^{1,\infty }(\mathbb{R}^{d})^{d})$, so
problem (\ref{eq2}) has a unique solution, and $c^{0}(x)$ is sufficiently
smooth, we have that the solution of (\ref{eq3}) is then given by%
\begin{equation}
c(X\mathbf{(\cdot ,\ }t;t+\tau ),t+\tau )=c(\cdot ,t).  \label{eq4}
\end{equation}%
Concerning the solution $t\rightarrow X(x,s;t)$ to (\ref{eq2}), the
following regularity results are in order.

\begin{lemma}
\label{solX copy(1)} \textit{Assume that } $\mathbf{u}\in
C([0,T],W^{k,\infty }(\mathbb{R} ^{d})^{d})$, \ $k\geq 1$. \textit{\ Then
for }$s,\ t\in \lbrack 0,T]$\textit{, there exists a unique solution }$%
t\rightarrow X(x,s;t)\ $\textit{of (\ref{eq2}), such that }$X(x,s;t)\in
W^{1,\infty }(W^{k,\infty }( \mathbb{R} ^{d})^{d})$\textit{. Furthermore,
let the multi-index }$\alpha \in N^{d}$\textit{, then for all }$\alpha ,$%
\textit{\ such that }$1\leq \mid \alpha \mid \leq k,$\textit{\ }$D^{\mathbf{%
\alpha }}X_{i}(x,s;t)\in C([0,T],L^{\infty }(\mathbb{R}^{d}\times \lbrack
0,T]))$, $1\leq i\leq d$.
\end{lemma}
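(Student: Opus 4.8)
The plan is to read (\ref{eq2}) as a classical initial value problem for each fixed $x$ and to obtain all the asserted regularity by combining the Cauchy--Lipschitz theorem with repeated applications of Gronwall's inequality to the variational (linearized) equations satisfied by the spatial derivatives of the flow. \textbf{Step 1 (existence, uniqueness, first-order bounds).} Since $\mathbf{u}\in C([0,T],W^{k,\infty}(\mathbb{R}^d)^d)$ with $k\ge1$, the field $(x,t)\mapsto\mathbf{u}(x,t)$ is continuous in $t$, bounded, and globally Lipschitz in $x$ uniformly in $t$ with constant $L:=\|\mathbf{u}\|_{C([0,T],W^{1,\infty})}$; in particular it has at most linear growth in $x$, so the Cauchy--Lipschitz theorem provides, for every $x$ and every $s$, a unique solution $t\mapsto X(x,s;t)$ of (\ref{eq2}) defined on all of $[0,T]$ with no finite-time blow-up. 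Writing $X(x,s;t)=x+\int_s^t\mathbf{u}(X(x,s;r),r)\,dr$, subtracting the same identity at a second initial point $y$, and using the Lipschitz bound on $\mathbf{u}$ together with Gronwall's inequality gives $|X(x,s;t)-X(y,s;t)|\le e^{LT}|x-y|$ for all $s,t\in[0,T]$, so $X(\cdot,s;t)$ is Lipschitz with constant $e^{LT}$ uniformly in $s,t$; the equation itself gives $|\partial_t X|\le\|\mathbf{u}\|_{L^\infty}$, which takes care of the $W^{1,\infty}$-in-time statement once the higher spatial derivatives are controlled.

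\textbf{Step 2 (higher-order spatial derivatives).} Differentiating the integral equation in $x$, the Jacobian $J(x,s;t):=D_xX(x,s;t)$ formally solves the linear ODE $\dot J=(D\mathbf{u})(X(x,s;t),t)\,J$, $J(x,s;s)=I$; since $\|D\mathbf{u}\|_{L^\infty}\le L$, Gronwall gives $\|J(x,s;t)\|\le e^{LT}$ uniformly in $x,s,t$. One first makes this rigorous by showing that the difference quotients $h^{-1}\bigl(X(x+he_j,s;\cdot)-X(x,s;\cdot)\bigr)$ converge as $h\to0$ to the solution of this linear equation, estimating the remainder with Step 1 and Gronwall; this yields $X(\cdot,s;t)\in C^1$ with the stated bound. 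For $2\le|\alpha|\le k$ one argues by induction on $|\alpha|$: applying $D^{\alpha}$ to (\ref{eq2}) and using the Faà di Bruno formula, $D^{\alpha}X$ satisfies $\tfrac{d}{dt}D^{\alpha}X=(D\mathbf{u})(X,t)\,D^{\alpha}X+R_{\alpha}$, $D^{\alpha}X(x,s;s)=0$, where $R_{\alpha}$ is a finite sum of products of derivatives of $\mathbf{u}$ of order $\le|\alpha|\le k$ (evaluated along the characteristic, hence in $L^{\infty}$ by hypothesis) with spatial derivatives of $X$ of order $<|\alpha|$ (bounded by the induction hypothesis). Thus $R_{\alpha}$ is bounded uniformly in $x,s,t$, and Gronwall once more gives $\|D^{\alpha}X(\cdot,s;t)\|_{L^{\infty}}\le C$ independently of $s,t$, which establishes $X(x,s;t)\in W^{1,\infty}(W^{k,\infty}(\mathbb{R}^d)^d)$.

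\textbf{Step 3 (continuity in time).} For fixed $\alpha$ with $1\le|\alpha|\le k$, the map $t\mapsto D^{\alpha}X(x,s;t)$ solves a linear ODE whose coefficient $(D\mathbf{u})(X(x,s;t),t)$ and inhomogeneity $R_{\alpha}$ are continuous in $t$, because $X(x,s;\cdot)$ is continuous and $\mathbf{u}\in C([0,T],W^{k,\infty})$; hence each $D^{\alpha}X_i(x,s;\cdot)$ is $C^1$ in $t$, and since the bound from Step 2 is uniform we conclude $D^{\alpha}X_i\in C([0,T],L^{\infty}(\mathbb{R}^d\times[0,T]))$, $1\le i\le d$.

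\textbf{Main obstacle.} The routine parts are the existence/uniqueness and the Lipschitz estimates in $x$ and $t$; the real work is Step 2 — identifying the correct linearized equations for the $D^{\alpha}X$, organizing the Faà di Bruno bookkeeping so that the inhomogeneous term $R_{\alpha}$ involves only quantities already under control by the induction, and, before any of the Gronwall estimates can be invoked, rigorously justifying differentiability of the flow in $x$ (i.e. passing to the limit in the difference quotients) under the weaker hypothesis $\mathbf{u}\in W^{k,\infty}$, that is $\mathbf{u}\in C^{k-1,1}$, rather than $\mathbf{u}\in C^{k}$.
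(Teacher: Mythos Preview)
The paper does not actually prove this lemma; it is stated without proof as a standard regularity result for the characteristic flow and then used freely in the subsequent analysis. Your proposed argument --- Cauchy--Lipschitz for global existence and uniqueness, Gronwall for the uniform Lipschitz bound on the flow, and then induction on $|\alpha|$ via the linear variational equations $\tfrac{d}{dt}D^{\alpha}X=(D\mathbf{u})(X,t)\,D^{\alpha}X+R_{\alpha}$ with Fa\`a di Bruno bookkeeping for $R_{\alpha}$ and a Gronwall estimate at each step --- is the classical route to such flow-regularity statements and is correct.

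Your identification of the one genuine subtlety is also accurate: under the hypothesis $\mathbf{u}\in C([0,T],W^{k,\infty})$ the top spatial derivative $D^{k}\mathbf{u}$ is only in $L^{\infty}$, not continuous, so at the final induction step the passage from difference quotients to $D^{k}X$ cannot be done by the naive $C^{1}$ argument. One closes this either by observing that $D^{k-1}\mathbf{u}$ is Lipschitz, hence $D^{k-1}X$ is Lipschitz in $x$ uniformly in $s,t$ (by the same Gronwall argument applied to differences of $D^{k-1}X$), and then invoking Rademacher, or by a direct difference-quotient estimate together with dominated convergence. Either way the proof goes through, and nothing more is needed for the paper's purposes.
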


Next, we consider the mapping $\varphi _{s}^{t}:\mathbb{R}^{d}\rightarrow
\mathbb{R}^{d}$, defined by $\varphi _{s}^{t}(x)=X(x,s;t)$, since $%
X(X(x,s;t),t;s)=x$, then it follows that the mapping $\varphi _{t}^{s}$ is
the inverse of $\varphi _{s}^{t}$. The Jacobian determinant of this
transformation%
\begin{equation}
J(x,s;t)=\det \left( \frac{\partial X_{i}(x,s;t)}{\partial x_{j}}\right) ,\
1\leq i,j\leq d,  \label{eqj1}
\end{equation}%
satisfies the equation
\begin{equation}
\frac{\partial J(x,s;t)}{\partial t}=J(x,s;t)\mathrm{div\ }\mathbf{u(}%
X(x,s;t),t).  \label{eqj2}
\end{equation}%
It is easy to see that if $C_{\mathbf{u}}:=\left\Vert \text{\textrm{div}}\
\mathbf{u}\right\Vert _{L^{\infty }(D\times (0,T))}$, then%
\begin{equation}
\exp (-C_{\mathbf{u}}\left\vert s-t\right\vert )\leq J(x,s;t)\leq \exp (C_{%
\mathbf{u}}\left\vert s-t\right\vert ).  \label{jac}
\end{equation}%
Moreover, for $\left\vert t-s\right\vert $ sufficiently small it follows that%
\begin{equation}
K_{1}\mid x-y\mid \leq \mid X(x,s;t)-X(y,s;t)\mid \leq K_{2}\mid x-y\mid ,
\label{jac2}
\end{equation}%
where $K_{1}=(1-\mid s-t\mid \cdot \mid \nabla \mathbf{u}\mid _{L^{\infty
}(L^{\infty }(D)^{d\times d})})$, and $K_{2}=\exp (\mid s-t\mid \cdot \mid
\nabla \mathbf{u}\mid _{L^{\infty }(L^{\infty }(D)^{d\times d})})$. Here, $%
\mid a-b\mid $\textit{\ }denotes the Euclidean distance between the points%
\textit{\ }$a,\ b\in \mathbb{R}^{d}$. Hereafter, for the sake of simplicity,
we make the assumption $\mathrm{div\ }\mathbf{u}=0$. An important
consequence of this assumption is that $J(x,s;t)=1$ almost everywhere in $%
\mathbb{R}$. However, we must remark that one can easily accommodate the
proofs of \ our results to the general case of $\mathrm{div\ }\mathbf{u}\neq
0$.

\section{The conventional LG method for the advection equation}

In the framework of finite elements, Douglas and Russell (1982) and
Pironneau (1982) proposed the so called conventional LG method as a time
marching algorithm to approximate the solution of (\ref{eq1}).

\subsection{Finite element formulation}

The realization of this method requires the definition of a family of
partitions $D_{h}$ in a domain $D\subset \mathbb{R}^{d}$ sufficiently large,
such that given $T>0$, $D_{0}\subset \subset D$ and for all $t\in [0,T]$ we
can assume that $c(x,t)=0$ on the boundary $\partial D$. The partitions $%
D_{h}$ generated in the closed region $\overline{D}:=D\cup \partial D$ are
quasi-uniform regular and composed of $d$-simplices $K$, the boundaries of
which are denoted by $\partial K$. $h_{K}$ denotes the diameter of $K$ and
the mesh parameter $h:=\max_{K}h_{K}$. Moreover, we shall assume that $%
\mathbf{u(}x,t\mathbf{)}$ is zero on the boundary $\partial D$. To define
the finite element spaces we use the reference simplex $\widehat{K}$ with
vertices $\left\{ \widehat{x}_{i}\right\} _{i=1}^{d+1}$, $\widehat{K}%
:=\left\{ \widehat{x}\in \mathbb{R}^{d}:0\leq \widehat{x}_{i}\leq 1,\
1-\sum_{i=1}^{d}\widehat{x}_{i}\geq 0\right\} $, such that for each $K\in
D_{h}$ there is an invertible affine mapping $F_{K}:\widehat{K}\rightarrow
K, $
\begin{equation*}
F_{K}(\widehat{x})=\mathbf{B}_{K}\widehat{x}+\mathbf{b}_{K},\ \ \mathbf{B}%
_{K}\in \mathcal{L}(\mathbb{R}^{d})\ \text{and}\ \mathbf{b}_{K}\in \mathbb{R}%
^{d}.
\end{equation*}%
The finite element spaces used in the formulation of the LG method are the
following:
\begin{equation*}
W_{h}:=\left\{ v_{h}\in C(\overline{D}):\forall K\in D_{h},\ v_{h}\mid
_{K}\in P_{m}(K)\right\} ,\ \text{and}\ V_{h}=H_{0}^{1}(D)\cap W_{h},
\end{equation*}%
with%
\begin{equation*}
P_{m}(K)=\left\{ p(x):\text{for }x\in K\text{, }p(x)=\widehat{p}\circ
F_{K}^{-1}(x)\text{, }\widehat{p}\in P_{m}(\widehat{K})\right\} ,
\end{equation*}%
where $P_{m}(\widehat{K})$ denotes the set of polynomials of degree $\leq m$
defined in $\widehat{K}$.\ Next, we introduce some auxiliary results
concerning the approximation properties of the finite element spaces. For $%
0<h<h_{0}<1$, there exists a constant $c_{1}$ independent of $h$ such that
for $w\in H^{q+1}(D)\cap H_{0}^{1}(D)$ and $1\leq q\leq m$,
\begin{equation}
\inf_{v_{h}\in V_{h}}\left\{ \left\Vert w-v_{h}\right\Vert +h\left\Vert
\nabla \left( w-v_{h}\right) \right\Vert \right\} \leq
c_{1}h^{q+1}\left\vert w\right\vert _{H^{q+1}(D)}\text{.}  \label{eq4a}
\end{equation}%
Since the partition $D_{h}$ is quasi-uniformly regular, the following
inverse inequality holds: for all $w_{h}\in W_{h}$ and $0\leq k\leq l\leq 1$%
, and $1\leq p\leq q\leq \infty $, there exists a constant $c_{\mathrm{inv}}$
independent of $h$ such that,%
\begin{equation}
\left\Vert w_{h}\right\Vert _{W^{l,q}(D)}\leq c_{\mathrm{inv}%
}h^{d/q-d/p+k-m}\left\Vert w_{h}\right\Vert _{W^{k,p}(D)}.  \label{eq4b}
\end{equation}%
Let $\Pi _{h}:C(D)\rightarrow W_{h}$ be the Lagrange interpolation operator
in $W_{h}$ and let $P_{h}:L^{2}(D)\rightarrow V_{h}$ be the orthogonal $%
L^{2} $-projector defined as%
\begin{equation}
\left( w-P_{h}w,v_{h}\right) =0\text{ \ for all }v_{h}\in V_{h}\text{,}
\label{eq4c}
\end{equation}%
then there are constants $c_{2}$ and $c_{3}$ independent of $h$, such that
for $0\leq \sigma \leq m$, and $1\leq \gamma \leq \infty $,%
\begin{equation}
\left\Vert w-P_{h}w\right\Vert +h\left\Vert \nabla (w-P_{h}w)\right\Vert
\leq c_{2}h^{\sigma +1}\left\vert w\right\vert _{H^{\sigma +1}(D)}
\label{eq4d}
\end{equation}%
and%
\begin{equation}
\left\Vert w-\Pi _{h}w\right\Vert _{L^{\gamma }(D)}+h\left\Vert \nabla
(w-\Pi _{h}w)\right\Vert _{L^{\gamma }(D)}\leq c_{3}h^{\sigma +1}\left\vert
w\right\vert _{W^{\sigma +1,\gamma }(D)},  \label{eq4e}
\end{equation}%
respectively \cite{Ci}. It is worth noting that the estimate (\ref{eq4e})
and the inverse inequality are also valid when the domain $D$ is substituted
by an element $K $. The following properties of the projector $P_{h}$ are
also used in the paper:%
\begin{equation*}
P_{h}v_{h}=v_{h}\ \forall v_{h}\in V_{h},
\end{equation*}%
and (contractiveness)%
\begin{equation*}
\left\Vert P_{h}v\right\Vert \leq \left\Vert v\right\Vert \ \forall v\in
L^{2}(D).
\end{equation*}

Let $\mathcal{P}:=$ $0=t_{0}<t_{1}<\ldots <t_{N}=T$ be a uniform partition
of step length $\Delta t$ for the interval $[0,T]$, the finite element
solution of (\ref{eq1}) at time $t_{n}$, denoted by $c_{h}^{n}\in V_{h}$, is
given by
\begin{equation*}
c_{h}^{n}=\sum_{i=1}^{N_{P}}C_{i}^{n}\phi _{i},
\end{equation*}%
where $C_{i}^{n}:=c_{h}^{n}(x_{i})$, $x_{i}$ being the $i$th mesh-point in $%
D_{h}$, $N_{P}$ denotes the number of mesh-points of the partition $D_{h}$,
and $\{\phi _{i}\}_{i=1}^{N_{P}}$ is the set of global basis functions of $%
V_{h}$. The conventional LG method calculates $c_{h}^{n}\in V_{h}$ as%
\begin{equation}
c_{h}^{n}(x)=P_{h}c_{h}^{n-1}\circ X(x,t_{n};t_{n-1}),  \label{cua40}
\end{equation}%
or equivalently,\ for all $i=1,...,N_{P},$

\begin{equation}
\int_{D}c_{h}^{n}(x)\phi _{i}(x)dx=\int_{D}c_{h}^{n-1}\circ
X(x,t_{n};t_{n-1})\phi _{i}(x)dx,  \label{ecua4}
\end{equation}%
where $X(x,t_{n};t_{n-1})$ is the position at time instant $t_{n-1}$ of the
point that at time instant $t_{n}$ is located at the point $x$.

\textbf{Notations }Let us introduce some shorthand notations in order to
simplify the writing of the formulas that will appear in the article. In the
sequel, we sometimes use $X^{n,n-1}$ or if confusion may arise $%
X^{n,n-1}(x), $ to denote $X(x,t_{n};t_{n-1})$. Also, let $a(x,t)$ be a
generic function defined in $\mathbb{R} ^{d}\times \left[ 0,T\right] $, then
$a^{n}(x)$ will denote the value of $a(x,t)$ at time instant $t_{n},$ that
is, $a^{n}(x)=a(x,t_{n})$, whereas $a^{\ast n-1}(x)$ denotes $a^{n-1}\circ
X^{n,n-1}(x)$.

Hereafter, we assume that $h\in (0,h_{0})$ and $\Delta t\in (0,\Delta t_{0})$
with $h_{0}<1$ and $\Delta t_{0}<1$.

%

\subsection{Analysis of the conventional LG method}

We begin analyzing the $L^{2}$-norm stability of the method.

\begin{lemma}
\label{stability} For all $N\geq 1$,
\begin{equation}
\left\Vert c_{h}^{N}\right\Vert ^{2}+\sum_{n=1}^{N}\left\Vert
c_{h}^{n}-c_{h}^{\ast n-1}\right\Vert ^{2}=\left\Vert c_{h}^{0}\right\Vert
^{2}.  \label{eq6}
\end{equation}
\end{lemma}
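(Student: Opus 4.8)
The plan is to test the discrete equation \eqref{ecua4} against the natural quantity $c_h^n$ itself and exploit the measure-preserving property $J(x,t_n;t_{n-1})=1$ coming from $\mathrm{div}\,\mathbf{u}=0$. Concretely, from \eqref{cua40} we have $c_h^n = P_h(c_h^{n-1}\circ X^{n,n-1}) = P_h c_h^{*\,n-1}$. First I would write $c_h^{*\,n-1} = c_h^n + (c_h^{*\,n-1}-c_h^n)$ and take the $L^2$ inner product of both sides with $c_h^n$. Using the defining orthogonality \eqref{eq4c} of $P_h$ together with $c_h^n\in V_h$, the term $(c_h^{*\,n-1}-P_h c_h^{*\,n-1},\,c_h^n)=0$, so $(c_h^{*\,n-1},c_h^n)=\|c_h^n\|^2$. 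Hence
\begin{equation*}
\|c_h^n\|^2 = (c_h^{*\,n-1},c_h^n) = \tfrac12\|c_h^{*\,n-1}\|^2 + \tfrac12\|c_h^n\|^2 - \tfrac12\|c_h^n - c_h^{*\,n-1}\|^2,
\end{equation*}
by the polarization identity $2(a,b)=\|a\|^2+\|b\|^2-\|a-b\|^2$. This rearranges to the one-step energy identity
\begin{equation*}
\|c_h^n\|^2 + \|c_h^n - c_h^{*\,n-1}\|^2 = \|c_h^{*\,n-1}\|^2.
\end{equation*}

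The second ingredient is that composition with the characteristic flow is an $L^2$-isometry: since $\varphi_{t_n}^{t_{n-1}}$ is a measure-preserving diffeomorphism of $\mathbb{R}^d$ (equivalently of $D$, using that $\mathbf{u}$ vanishes on $\partial D$ so the flow maps $D$ to $D$), the change of variables $y=X(x,t_n;t_{n-1})$ has Jacobian $J\equiv 1$ by \eqref{jac} with $C_{\mathbf u}=0$, whence
\begin{equation*}
\|c_h^{*\,n-1}\|^2 = \int_D |c_h^{n-1}(X(x,t_n;t_{n-1}))|^2\,dx = \int_D |c_h^{n-1}(y)|^2\,dy = \|c_h^{n-1}\|^2.
\end{equation*}
Substituting this into the one-step identity gives $\|c_h^n\|^2 + \|c_h^n - c_h^{*\,n-1}\|^2 = \|c_h^{n-1}\|^2$ for every $n\ge 1$.

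The final step is simply to sum this telescoping identity over $n=1,\dots,N$: the left-hand side contributes $\|c_h^N\|^2 + \sum_{n=1}^N \|c_h^n - c_h^{*\,n-1}\|^2$ plus the intermediate terms $\|c_h^n\|^2$ for $n=1,\dots,N-1$, and the right-hand side contributes $\sum_{n=1}^N\|c_h^{n-1}\|^2 = \|c_h^0\|^2 + \sum_{n=1}^{N-1}\|c_h^n\|^2$; the intermediate norms cancel, yielding \eqref{eq6}. I do not foresee a genuine obstacle here — the only point that deserves care is the isometry step, where one must invoke the incompressibility assumption $\mathrm{div}\,\mathbf{u}=0$ (so $J=1$) and the fact that $c_h^{n-1}$ has support inside $D$ with $\mathbf{u}=0$ on $\partial D$, so that the domain of integration is preserved under the change of variables; everything else is the polarization identity and a telescoping sum.
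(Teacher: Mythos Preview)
Your proof is correct and follows essentially the same approach as the paper: the paper also uses the orthogonality $(c_h^n - c_h^{*\,n-1}, c_h^n)=0$ from \eqref{cua40}, the algebraic identity $2(a-b,a)=\|a\|^2+\|a-b\|^2-\|b\|^2$, the isometry $\|c_h^{*\,n-1}\|=\|c_h^{n-1}\|$ via the change of variables with $J=1$, and then telescopes. The only cosmetic difference is that you phrase the orthogonality as $(c_h^{*\,n-1},c_h^n)=\|c_h^n\|^2$ and apply the polarization identity in the form $2(a,b)=\|a\|^2+\|b\|^2-\|a-b\|^2$, which is an equivalent rearrangement.
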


\begin{proof}
First of all, we show that for any function $f^{\ast
n-1}(x):=f^{n-1}(X(x,t_{n},t_{n-1}))\in L^{2}(D)$%
\begin{equation}
\left\Vert f^{\ast n-1}\right\Vert =\left\Vert f^{n-1}\right\Vert .
\label{eq6.0}
\end{equation}%
To see this is so, we make the change of variable $y=X^{n,n-1}(x)$ and
recall that the Jacobian determinant of this transformation, $%
J(x,t_{n};t_{n-1})=1$ a.e., then
\begin{equation*}
\int_{D}\left\vert f^{\ast n-1}(x)\right\vert ^{2}dx=\int_{D}\left\vert
f^{n-1}(X^{n,n-1}(x))\right\vert ^{2}dx=\int_{D}\left\vert
f^{n-1}(y)\right\vert ^{2}J^{-1}dy=\int_{D}\left\vert f^{n}(y)\right\vert
^{2}dy.
\end{equation*}%
Now, we notice that from (\ref{cua40}) it follows that%
\begin{equation*}
\left( c_{h}^{n}-c_{h}^{\ast n-1},c_{h}^{n}\right) =0,
\end{equation*}%
then using the elementary relation $2(a-b,a)=a^{2}+(a-b)^{2}-b^{2},\ a,b\in
\mathbb{R}$, we obtain that
\begin{equation*}
2\left( c_{h}^{n}-c_{h}^{\ast n-1},c_{h}^{n}\right) =\left\Vert
c_{h}^{n}\right\Vert ^{2}+\left\Vert c_{h}^{n}-c_{h}^{\ast n-1}\right\Vert
^{2}-\left\Vert c_{h}^{\ast n-1}\right\Vert ^{2},
\end{equation*}%
and by virtue of (\ref{eq6.0})%
\begin{equation*}
\left\Vert c_{h}^{n}\right\Vert ^{2}+\left\Vert c_{h}^{n}-c_{h}^{\ast
n-1}\right\Vert ^{2}-\left\Vert c_{h}^{n-1}\right\Vert ^{2}=0.
\end{equation*}%
Summing this expression from $n=1$ up to $n=N$ it follows (\ref{eq6}).
\end{proof}

\begin{remark}
Following \cite{john}, we can interpret the term $\sum_{n=1}^{N}\left\Vert
c_{h}^{n}-c_{h}^{\ast n-1}\right\Vert ^{2}$ as a measure of the numerical
dissipation of conventional LG methods. It is shown there that
\begin{equation*}
\sum_{n=1}^{N}\left\Vert c_{h}^{n}-c_{h}^{\ast n-1}\right\Vert ^{2}\leq C%
\frac{h^{4}}{\Delta t}\sum_{n=1}^{N}\Delta t\left\Vert \Delta
_{h}^{n-1}c_{h}^{\ast n-1}\right\Vert^{2} ,
\end{equation*}%
where $\Delta _{h}^{n-1}:H^{1}(D)\rightarrow W_{h}^{\ast n-1}:=\left\{
v_{h}^{\ast n-1}(x)=v_{h}^{n-1}(X^{n,n-1}(x)):v_{h}^{n-1}(x)\in
W_{h}\right\} $ denotes the discrete Laplacian operator. When $\Delta t=h$
this amounts to adding an artificial diffusion term to the continuous
advection equation of the form $Ch^{3}\Delta c$; so, for sufficiently smooth
solutions such an artificial diffusion is not excessive, in particular if
one compares with the usual upwind method that adds an artificial diffusion
therm of the form $-Ch\Delta c$, but it may be insufficient to eliminate the
oscillations when the exact solution is not smooth. To deal with the case of
non smooth solutions we introduce the DC-LG method.
\end{remark}

The remainder of this section is devoted to the analysis of the
convergence.\ We have the following result.

\begin{theorem}
\label{teorema1} Let $c\in L^{\infty }(H^{m+1}(D)\cap H_{0}^{1}(D))$. Then
there exists a constant $C$ independent of $\Delta t,\ h,$ and $n$, such that%
\begin{equation}
\left\Vert c-c_{h}\right\Vert _{l^{\infty }\left( L^{2}(D)\right) }\leq
\left\Vert e^{0}\right\Vert +C\min \left( 1,\displaystyle\frac{\Delta
t^{1/2}\left\Vert \mathbf{u}\right\Vert _{L^{\infty }(W^{1,\infty }(D)^{d}))}%
}{h}\right) \displaystyle\frac{h^{m+1}}{\Delta t^{1/2}}\left\vert
c\right\vert _{L^{\infty }(H^{m+1}(D))}.  \label{eq8}
\end{equation}
\end{theorem}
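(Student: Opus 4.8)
The plan is to split the error $e^n := c^n - c_h^n$ via the $L^2$-projection $P_h$ into an interpolation-type part $\rho^n := c^n - P_h c^n$ and a discrete part $\theta^n := P_h c^n - c_h^n \in V_h$. The first part is controlled directly by the projection estimate \eqref{eq4d}, which gives $\|\rho^n\| \le c_2 h^{m+1}|c^n|_{H^{m+1}(D)}$, a term already of the desired form (indeed better, since it has no $\Delta t^{-1/2}$). So the heart of the matter is to bound $\theta^n$. For this I would derive an error equation for $\theta^n$ by testing the scheme \eqref{ecua4} against $v_h \in V_h$ and subtracting the identity satisfied by the exact solution, which along characteristics reads $(c^n, v_h) = (c^{*n-1}, v_h)$ (using $\operatorname{div}\mathbf{u}=0$, i.e.\ $J\equiv 1$, and \eqref{eq4}). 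This yields, for all $v_h\in V_h$,
\begin{equation*}
(\theta^n - \theta^{*n-1}, v_h) = (\rho^{*n-1} - \rho^n, v_h) + (\theta^{*n-1} - P_h\theta^{*n-1}, v_h),
\end{equation*}
where the last term comes from the fact that $\theta^{*n-1}$ need not lie in $V_h$ (the composition with $X^{n,n-1}$ leaves $V_h$), so writing $c_h^n = P_h c_h^{*n-1}$ forces a projection. Actually since $\theta^n\in V_h$ one can also write $(\theta^n,v_h)=(P_h\theta^{*n-1},v_h)=(\theta^{*n-1},v_h)$, so the identity simplifies to $(\theta^n - \theta^{*n-1}, v_h) = (\rho^{*n-1} - \rho^n, v_h)$ for $v_h \in V_h$; note $\rho^n$ is orthogonal to $V_h$ so $(\rho^n,v_h)=0$, leaving $(\theta^n-\theta^{*n-1},v_h)=(\rho^{*n-1},v_h)$.

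Next I would take $v_h = \theta^n$ and use the polarization identity $2(a-b,a) = \|a\|^2 + \|a-b\|^2 - \|b\|^2$ together with $\|\theta^{*n-1}\| = \|\theta^{n-1}\|$ (the Jacobian-one change of variables, as in Lemma \ref{stability}) to obtain
\begin{equation*}
\|\theta^n\|^2 + \|\theta^n - \theta^{*n-1}\|^2 - \|\theta^{n-1}\|^2 = 2(\rho^{*n-1}, \theta^n).
\end{equation*}
The crucial point — and the main obstacle — is how to estimate the right-hand side to get the sharp factor $\min(1, \|\mathbf{u}\|\Delta t^{1/2}/h)\, h^{m+1}/\Delta t^{1/2}$ rather than the cruder $h^{m+1}/\Delta t$. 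The trick (following the spirit of \cite{MS,john}) is to rewrite $(\rho^{*n-1},\theta^n) = (\rho^{*n-1}, \theta^n - \theta^{*n-1}) + (\rho^{*n-1}, \theta^{*n-1})$, where again $(\rho^{*n-1},\theta^{*n-1})=(\rho^{n-1},\theta^{n-1})\cdot$(Jacobian)$= (\rho^{n-1},\theta^{n-1})$, but since $\theta^{n-1}\in V_h$ and $\rho^{n-1}\perp V_h$ this vanishes. Hence $(\rho^{*n-1},\theta^n) = (\rho^{*n-1} - \rho^{n-1}, \theta^n - \theta^{*n-1}) + (\rho^{n-1}, \theta^n - \theta^{*n-1})$; the second inner product is again zero, so
\begin{equation*}
2(\rho^{*n-1},\theta^n) = 2(\rho^{*n-1} - \rho^{n-1}, \theta^n - \theta^{*n-1}) \le \tfrac{1}{2}\|\theta^n - \theta^{*n-1}\|^2 + 2\|\rho^{*n-1} - \rho^{n-1}\|^2.
\end{equation*}
The first term is absorbed by the left-hand side; the gain is that we must now estimate $\|\rho^{*n-1} - \rho^{n-1}\|$, a difference of $\rho^{n-1}$ along the characteristic displacement, which is $O(h^{m+1})$ but also $O(\Delta t \|\mathbf{u}\|_{W^{1,\infty}}\,\|\nabla \rho^{n-1}\|)$-ish — more precisely one shows $\|\rho^{*n-1}-\rho^{n-1}\| \le C \min(\, \|\rho^{n-1}\|,\ \Delta t\|\mathbf{u}\|_{L^\infty}\|\nabla\rho^{n-1}\|\,)$ by either the trivial bound or a mean-value/Taylor estimate along the flow map combined with \eqref{jac2}. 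Using \eqref{eq4d} for both $\|\rho^{n-1}\|\le c_2 h^{m+1}|c^{n-1}|_{H^{m+1}}$ and $\|\nabla\rho^{n-1}\|\le c_2 h^{m}|c^{n-1}|_{H^{m+1}}$ gives
\begin{equation*}
\|\rho^{*n-1}-\rho^{n-1}\| \le C\min\!\left(1,\ \frac{\Delta t\,\|\mathbf{u}\|_{L^\infty(W^{1,\infty}(D)^d)}}{h}\right) h^{m+1}\,|c^{n-1}|_{H^{m+1}(D)}.
\end{equation*}

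Finally I would sum the recursion $\|\theta^n\|^2 + \tfrac12\|\theta^n-\theta^{*n-1}\|^2 \le \|\theta^{n-1}\|^2 + 2\|\rho^{*n-1}-\rho^{n-1}\|^2$ from $n=1$ to $N$, obtaining
\begin{equation*}
\|\theta^N\|^2 \le \|\theta^0\|^2 + 2\sum_{n=1}^N \|\rho^{*n-1}-\rho^{n-1}\|^2 \le \|\theta^0\|^2 + C N \left[\min\!\left(1,\tfrac{\Delta t\|\mathbf{u}\|}{h}\right)\right]^2 h^{2m+2} |c|_{L^\infty(H^{m+1}(D))}^2.
\end{equation*}
Since $N = T/\Delta t$, the factor $N$ produces exactly the $1/\Delta t$ inside the square, i.e.\ $\|\theta^N\| \le \|\theta^0\| + C\,\min(1,\Delta t\|\mathbf{u}\|/h)\, h^{m+1}\Delta t^{-1/2}\,|c|_{L^\infty(H^{m+1}(D))}$. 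Observing that $\min(1, \Delta t\|\mathbf{u}\|/h) \le \min(1, \Delta t^{1/2}\|\mathbf{u}\|/h)$ when $\Delta t \le 1$ — or, more carefully, distributing one power of $\Delta t^{1/2}$ from the $N$-sum into the min and keeping the other as the explicit $\Delta t^{-1/2}$ — recovers the stated form $\min(1, \Delta t^{1/2}\|\mathbf{u}\|/h)\,h^{m+1}/\Delta t^{1/2}$. Bounding $\|\theta^0\| \le \|e^0\| + \|\rho^0\|$ and combining with the triangle inequality $\|e^n\| \le \|\rho^n\| + \|\theta^n\|$, where $\|\rho^n\|\le c_2 h^{m+1}|c|_{L^\infty(H^{m+1})}$ is dominated by the main term, yields \eqref{eq8}. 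No discrete Gronwall is actually needed here because the advection equation has no zeroth-order term; the delicate accounting is entirely in squeezing the extra half-power of $\Delta t$ out of the telescoped sum, which is where I expect to spend the most care (in particular making the ``$\min$'' bookkeeping rigorous and handling the $X^{n,n-1}$ change of variables in the term $\|\rho^{*n-1}-\rho^{n-1}\|$ via \eqref{jac2} and Lemma \ref{solX copy(1)}).
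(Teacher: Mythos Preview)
Your decomposition and the initial manipulations are correct up to and including the identity $2(\theta^n-\theta^{*n-1},\theta^n)=2(\rho^{*n-1},\theta^n)$, but there is a genuine gap at the step where you claim $(\rho^{n-1},\theta^n-\theta^{*n-1})=0$. While $(\rho^{n-1},\theta^n)=0$ because $\theta^n\in V_h$ and $\rho^{n-1}\perp V_h$, the companion term $(\rho^{n-1},\theta^{*n-1})$ does \emph{not} vanish: the function $\theta^{*n-1}=\theta^{n-1}\circ X^{n,n-1}$ is not in $V_h$ (composition with the flow map destroys the piecewise-polynomial structure), so the orthogonality $\rho^{n-1}\perp V_h$ does not apply to it. This is precisely the obstruction that prevents a single clean recursion, with the forcing absorbed entirely by $\|\theta^n-\theta^{*n-1}\|^2$, from delivering both regimes of the $\min$ at once.

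The paper circumvents this by running two separate arguments and combining them a posteriori. First, working with $e^n$ itself and using $(e^n-e^{*n-1},\theta_h^n)=0$, it bounds $(e^n-e^{*n-1},\rho^n)\le\tfrac14\|e^n-e^{*n-1}\|^2+\|\rho^n\|^2$; telescoping yields the Johnson-type estimate $O(h^{m+1}/\Delta t^{1/2})$. Second, starting from $(\theta_h^n-\theta_h^{*n-1},\theta_h^n)=(\rho^{*n-1}-\rho^{n-1},\theta_h^n)$, it applies Young's inequality with weight $\Delta t$, namely $2|(\rho^{*n-1}-\rho^{n-1},\theta_h^n)|\le \tfrac{2}{\Delta t}\|\rho^{*n-1}-\rho^{n-1}\|^2+\tfrac{\Delta t}{2}\|\theta_h^n\|^2$, and then the discrete Gronwall inequality to absorb $\tfrac{\Delta t}{2}\sum_n\|\theta_h^n\|^2$; this produces the Morton--S\"uli-type estimate $O((\Delta t^{1/2}\|\mathbf u\|/h)\cdot h^{m+1}/\Delta t^{1/2})$. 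So, contrary to your closing remark, Gronwall \emph{is} used in the paper's second branch. Your route can in fact be repaired along the same lines: by change of variables and $(\rho^{n-1},\theta^{n-1})=0$ one has $(\rho^{n-1},\theta^{*n-1})=(\rho^{n-1}\circ X^{n-1,n}-\rho^{n-1},\theta^{n-1})$, and $\|\rho^{n-1}\circ X^{n-1,n}-\rho^{n-1}\|\le C\Delta t\,\|\mathbf u\|_{L^\infty}\,h^{m}|c|_{H^{m+1}}$; but bounding this against $\theta^{n-1}$ again produces a term $\Delta t\,\|\theta^{n-1}\|^2$ that forces a Gronwall step.
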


\begin{proof}
The error $e^{n}:=c^{n}-c_{h}^{n}$ can be expressed as%
\begin{equation}
e^{n}=\left( c^{n}-P_{h}c^{n}\right) +\left( P_{h}c^{n}-c_{h}^{n}\right)
\equiv \rho ^{n}+\theta _{h}^{n},  \label{eq9}
\end{equation}%
where $\theta _{h}^{n}\in V_{h}$. Noting that $P_{h}\rho ^{n}=0$, then it
follows that%
\begin{equation*}
\left\Vert e^{n}\right\Vert ^{2}=(\rho ^{n}+\theta _{h}^{n},\rho ^{n}+\theta
_{h}^{n})=\left\Vert \rho ^{n}\right\Vert ^{2}+\left\Vert \theta
_{h}^{n}\right\Vert ^{2}.
\end{equation*}%
By virtue of (\ref{eq4d}), $\rho ^{n}$ satisfies the bound%
\begin{equation}
\left\Vert \rho ^{n}\right\Vert _{H^{l}(D)}\leq c_{2}h^{m+1-l}\left\vert
c^{n}\right\vert _{H^{m+1}(D)}\text{\ \ }(0\leq l\leq m+1).  \label{eq9.1}
\end{equation}%
To estimate $e^{n}$ we make use of (\ref{eq4}), which implies that $%
c^{n}=c^{\ast n-1}$, so that $\forall v_{h}\in V_{h}$%
\begin{equation*}
(c^{n}-c^{\ast n-1},v_{h})=0,\
\end{equation*}%
so, subtracting (\ref{cua40}) from this equation it results the following
error equation%
\begin{equation}
\left( e^{n}-{e}^{\ast n-1},v_{h}\right) =0,  \label{eq10}
\end{equation}%
where ${e}^{\ast
n-1}(x)=c^{n-1}(X^{n,n-1}(x))-c_{h}^{n-1}(X^{n,n-1}(x))=\rho ^{n-1}\left(
X^{n,n-1}(x)\right) +\theta _{h}^{n-1}\left( X^{n,n-1}(x)\right) $. Now, we
calculate an error estimate from (\ref{eq10}). First, we notice that by
virtue of (\ref{eq6.0}) $\left\Vert e^{\ast n-1}\right\Vert =\left\Vert
e^{n-1}\right\Vert $, so this property together with the elementary relation
$2(a-b)a=a^{2}+(a-b)^{2}-b^{2}$, permits us to write%
\begin{equation}
\left\Vert e^{n}\right\Vert ^{2}+\left\Vert e^{n}-e^{\ast n-1}\right\Vert
^{2}-\left\Vert e^{n-1}\right\Vert ^{2}=2\left( e^{n}-e^{\ast
n-1},e^{n}\right) .  \label{err1}
\end{equation}%
Now, one needs to estimate the term $\left( e^{n}-e^{\ast n-1},e^{n}\right) $%
. For this purpose, we apply the argument of \cite{john}, use (\ref{eq9})
and set%
\begin{equation*}
(e^{n}-e^{\ast n-1},e^{n})=\left( e^{n}-e^{\ast n-1},\rho ^{n}\right)
+\left( e^{n}-e^{\ast n-1},\theta _{h}^{n}\right) ,
\end{equation*}%
but by virtue of (\ref{eq10}), $\left( e^{n}-e^{\ast n-1},\theta
_{h}^{n}\right) =0$, then the only term we have to estimate is $\left(
e^{n}-e^{\ast n-1},\rho ^{n}\right) $. Thus, by the Cauchy-Schwarz inequality%
\begin{equation}
\left( e^{n}-e^{\ast n-1},\rho ^{n}\right) \leq \frac{1}{4}\left\Vert
e^{n}-e^{\ast n-1}\right\Vert ^{2}+\left\Vert \rho ^{n}\right\Vert ^{2};
\label{er1}
\end{equation}%
hence, one can write that%
\begin{equation*}
2\left( e^{n}-e^{\ast n-1},e^{n}\right) \leq \displaystyle\frac{1}{2}%
\left\Vert e^{n}-e^{\ast n-1}\right\Vert ^{2}+2\left\Vert \rho
^{n}\right\Vert ^{2}.
\end{equation*}%
Using this bound on the right hand side of (\ref{err1}) it follows that%
\begin{equation*}
\left\Vert e^{n}\right\Vert ^{2}+\frac{1}{2}\left\Vert e^{n}-e^{\ast
n-1}\right\Vert ^{2}-\left\Vert e^{n-1}\right\Vert ^{2}\leq 2\left\Vert \rho
^{n}\right\Vert ^{2}.
\end{equation*}%
From this expression, summing from $n=1$ up to $n=N$ one readily obtains that%
\begin{equation*}
\left\Vert e^{N}\right\Vert ^{2}+\frac{1}{2}\sum_{n=1}^{N}\left\Vert
e^{n}-e^{\ast n-1}\right\Vert ^{2}\leq \frac{2T}{\Delta t}\left\Vert \rho
\right\Vert _{l^{\infty }(L^{2}(D))}^{2}+\left\Vert e^{0}\right\Vert ^{2}.
\end{equation*}%
Then using (\ref{eq9.1}) yields%
\begin{equation}
\left\Vert c-c_{h}\right\Vert _{l^{\infty }\left( L^{2}(D)\right) }\leq C%
\displaystyle\left( \frac{h^{m+1}}{\Delta t^{1/2}}\left\vert c\right\vert
_{L^{\infty }(H^{m+1}(D))}\right) +\left\Vert e^{0}\right\Vert.  \label{10a3}
\end{equation}%
For $\Delta t=O(h)$, the error is $O(h^{m+1/2})$, which is of the same order
as the streamline-diffusion method \cite{JNP} for the advection equation.
However, this estimate does not allow the convergence of the method when $%
\Delta t\rightarrow 0$ independently of $h$. To overcome this trouble, we
apply the procedure of \cite{MS} to obtain an error estimate valid for all $%
\Delta t$. So, substituting $e^{n}=\rho ^{n}+\theta _{h}^{n}$,$\ $and$\
e^{\ast n-1}=\rho ^{\ast n-1}+\theta _{h}^{\ast n-1}$ in (\ref{eq10}) and
rearranging terms yields

\begin{equation}
\left( \theta _{h}^{n}-\theta _{h}^{\ast n-1},v_{h}\right) =-(\rho ^{n}-\rho
^{n-1},v_{h})-(\rho ^{n-1}-\rho ^{\ast n-1},v_{h}).  \label{11}
\end{equation}%
Letting $v_{h}=\theta _{h}^{n}$ we bound each term of this equality as
follows. First, we notice that $\left\Vert \theta _{h}^{\ast n-1}\right\Vert
^{2}=\left\Vert \theta _{h}^{n-1}\right\Vert ^{2}$ and consequently%
\begin{equation*}
\left\Vert \theta _{h}^{n}\right\Vert ^{2}+\left\Vert \theta _{h}^{n}-\theta
_{h}^{\ast n-1}\right\Vert ^{2}-\left\Vert \theta _{h}^{n-1}\right\Vert
^{2}=2\left( \theta _{h}^{n}-\theta _{h}^{\ast n-1},\theta _{h}^{n}\right) .
\end{equation*}%
Second, since for each $n$, $P_{h}\rho ^{n}=0$, then it follows that $\
(\rho ^{n}-\rho ^{n-1},\theta _{h}^{n})=P_{h}(\rho ^{n}-\rho ^{n-1})=0$. It
remains to bound the term $2(\rho ^{n-1}-\rho ^{\ast n-1},\theta _{h}^{n})$.
To do so, we notice that%
\begin{equation*}
\rho ^{n-1}-\rho ^{\ast n-1}=\int_{t_{n-1}}^{t_{n}}\frac{d\rho
(X(x,t_{n};t),t_{n-1})}{dt}dt,
\end{equation*}%
since $\displaystyle \frac{d\rho (X(x,t_{n};t),t_{n-1})}{dt}=\mathbf{u}%
(X(x,t_{n};t),t)\cdot \nabla _{X}\rho (X(x,t_{n};t),t_{n-1})$, then by the
Cauchy-Schwarz inequality we get%
\begin{equation*}
\left\vert \rho ^{n-1}-\rho ^{\ast n-1}\right\vert ^{2}\leq \Delta
t\int_{t_{n-1}}^{t_{n}}\left\vert \mathbf{u}(X(x,t_{n};t),t)\cdot \nabla
_{X}\rho (X(x,t_{n};t),t_{n-1})\right\vert ^{2}dt,
\end{equation*}%
so, letting $y=X(x,t_{n};t)$ and denoting by $J^{t,n}$ the Jacobian
determinant $J(x,t;t_{n}):=\left( \frac{\partial X (x,t;t_{n})}{\partial x}%
\right) =1$, it follows that%
\begin{equation*}
\begin{array}{r}
\left\Vert \rho ^{n-1}-\rho ^{\ast n-1}\right\Vert ^{2}\leq \Delta t%
\displaystyle\int_{D}\int_{t_{n-1}}^{t_{n}}\left\vert \mathbf{u}(y,t)\cdot
\nabla \rho ^{n-1}(y)\right\vert ^{2}\left( J^{t,n-1}\right) ^{-1}dtdy\  \\
\\
\leq \Delta t\left\Vert \mathbf{u}\right\Vert _{L^{\infty }(L^{\infty
}(D)^{d})}^{2}\displaystyle\int_{t_{n-1}}^{t_{n}}\int_{D}\left\vert \nabla
\rho ^{n-1}(y)\right\vert ^{2}dydt\text{ } \\
\\
\leq \Delta t^{2}\left\Vert \mathbf{u}\right\Vert _{L^{\infty }(L^{\infty
}(D)^{d})}^{2}\left\Vert \nabla \rho ^{n-1}\right\Vert ^{2}.%
\end{array}%
\end{equation*}%
Now, using the estimate (\ref{eq9.1}) we can write that%
\begin{equation}
\left\Vert \rho ^{n-1}-\rho ^{\ast n-1}\right\Vert ^{2}\leq \Delta
t^{2}C\left( \frac{\Delta t^{1/2}\left\Vert \mathbf{u}\right\Vert
_{L^{\infty }(L^{\infty }(D)^{d})}}{h}\right) ^{2}\left( \frac{h^{m+1}}{%
\Delta t^{1/2}}\right) ^{2}\left\vert c\right\vert _{L^{\infty
}(H^{m+1}(D))}^{2}.  \label{roro*}
\end{equation}%
Then%
\begin{equation*}
\begin{array}{l}
2(\rho ^{n-1}-\rho ^{\ast n-1},\theta _{h}^{n})\leq \displaystyle\frac{2}{%
\Delta t}\left\Vert \rho ^{n-1}-\rho ^{\ast n-1}\right\Vert ^{2}+%
\displaystyle\frac{\Delta t}{2}\left\Vert \theta _{h}^{n}\right\Vert ^{2} \\
\\
\leq \Delta tC\displaystyle\left( \frac{\Delta t^{1/2}\left\Vert \mathbf{u}%
\right\Vert _{L^{\infty }(L^{,\infty }(D)^{d})}}{h}\right) ^{2}\left( \frac{%
h^{m+1}}{\Delta t^{1/2}}\right) ^{2}\left\vert c\right\vert _{L^{\infty
}(0,T;H^{m+1}(D))}^{2}+\displaystyle\frac{\Delta t}{2}\left\Vert \theta
_{h}^{n}\right\Vert ^{2}.%
\end{array}%
\end{equation*}%
Collecting all these bounds we have that%
\begin{equation*}
\left\Vert \theta _{h}^{n}\right\Vert ^{2}+\left\Vert \theta _{h}^{n}-\theta
_{h}^{\ast n-1}\right\Vert ^{2}-\left\Vert \theta _{h}^{n-1}\right\Vert
^{2}\leq \Delta tC\displaystyle\left( \frac{\Delta t^{1/2}\left\Vert \mathbf{%
u}\right\Vert _{L^{\infty }(L^{\infty }(D)^{d})}}{h}\right) ^{2}\left( \frac{%
h^{m+1}}{\Delta t^{1/2}}\right) ^{2}\left\vert c\right\vert _{L^{\infty
}(H^{m+1}(D))}^{2}+\frac{\Delta t}{2}\left\Vert \theta _{h}^{n}\right\Vert
^{2}.
\end{equation*}%
Now, summing from $n=1$ up to $n=N$ yields%
\begin{equation*}
\left\Vert \theta _{h}^{N}\right\Vert ^{2}+\sum_{n=1}^{N}\left\Vert \theta
_{h}^{n-1}-\theta _{h}^{\ast n-1}\right\Vert ^{2}\leq \left\Vert \theta
_{h}^{0}\right\Vert ^{2}+R^{2}+\frac{\Delta t}{2}\sum_{n=1}^{N}\left\Vert
\theta _{h}^{n}\right\Vert ^{2},
\end{equation*}%
where%
\begin{equation*}
R^{2}=\displaystyle C\left( \frac{\Delta t^{1/2}\left\Vert \mathbf{u}%
\right\Vert _{L^{\infty }(L^{\infty }(D)^{d})}}{h}\right) ^{2}\left( \frac{%
h^{m+1}}{\Delta t^{1/2}}\right) ^{2}\left\vert c\right\vert _{L^{\infty
}(H^{m+1}(D))}^{2}.
\end{equation*}%
Since $\left\Vert e^{N}\right\Vert ^{2}=\left\Vert \rho ^{N}\right\Vert
^{2}+\left\Vert \theta _{h}^{N}\right\Vert ^{2}$ and $\sum_{n=1}^{N}\left%
\Vert \theta _{h}^{n}\right\Vert ^{2}+\left\Vert \rho ^{N}\right\Vert
^{2}\leq \sum_{n=1}^{N}\left\Vert e^{n}\right\Vert ^{2}$, then it follows
that
\begin{equation*}
\left\Vert e^{N}\right\Vert ^{2}+\sum_{n=1}^{N}\left\Vert \theta
_{h}^{n-1}-\theta _{h}^{\ast n-1}\right\Vert ^{2}\leq \left\Vert
e^{0}\right\Vert ^{2}+R^{2}+\Delta t\sum_{n=1}^{N}\left\Vert
e^{n}\right\Vert ^{2}.
\end{equation*}%
Applying Gronwall inequality yields%
\begin{equation}
\left\Vert c-c_{h}\right\Vert _{l^{\infty }\left( L^{2}(D)\right) }\leq
C_{1}\left( \frac{\Delta t^{1/2}\left\Vert \mathbf{u}\right\Vert _{L^{\infty
}(L^{\infty }(D)^{d})}}{h}\right) \left( \frac{h^{m+1}}{\Delta t^{1/2}}%
\right) \left\vert c\right\vert _{L^{\infty }(0,T;H^{m+1}(D))}+\left\Vert
e^{0}\right\Vert .  \label{12}
\end{equation}%
Since (\ref{10a3}) is valid, then combining it with (\ref{12}) yields the
result (\ref{eq8}).
\end{proof}

\subsection{Numerical test with the conventional LG method}

We study the behavior of the conventional LG method considering the rotating
hump problem \cite{jack}. The domain $D:=(-1,1)\times (-1,1)$, the velocity
field is $\mathbf{u}=2\pi (-x_{2},x_{1})$, and the initial condition%
\begin{equation}
c^{0}(x)=\left\{
\begin{array}{l}
\cos ^{3}(\frac{3}{2}\pi r),\ \ r\leq 1/3, \\
\\
0\ \ \ \ \ \ \ \ \ \ \ \ \ \ \text{otherwise,}%
\end{array}%
\right.  \label{ic1}
\end{equation}%
where $r^{2}=(x_{1}-0.5)^{2}+x_{2}^{2}$. Notice that the function $\cos
^{p}(3\pi r/2)\in H^{p}(D),p\geq 1$, then $p=3$ allows enough smoothness for
the optimal estimate of the error when $m=1$ and $2$. We show in Figure 1
the isolines of the $L^{2}$-projected initial condition in a mesh with mesh
parameter $h=0.05$ and the cross section of the exact initial condition $%
c^{0}(x)$ at $x_{2}=0$.
\begin{figure}[th!]
\begin{center}
\scalebox{0.45}{\includegraphics{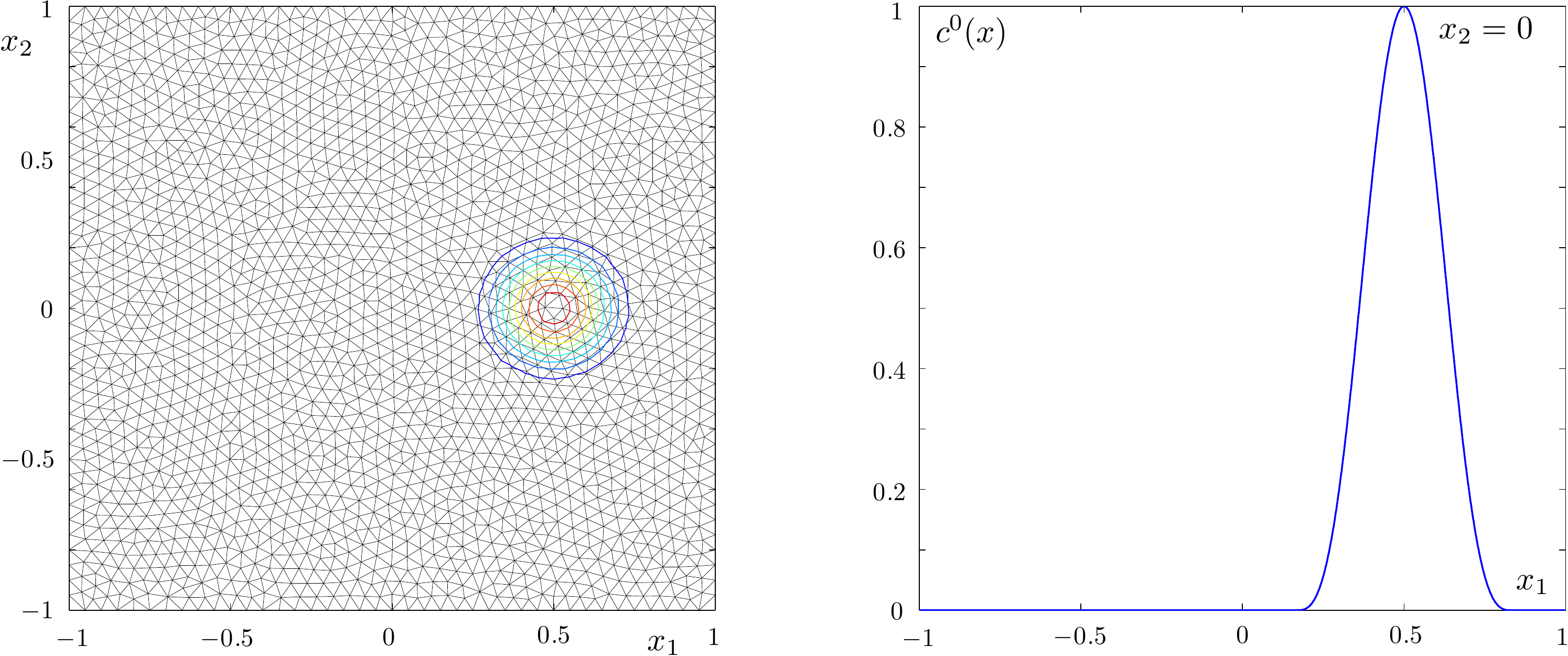}}
\end{center}
\caption{Initial condition of the rotating hump problem in a mesh with size $%
h=0.05$}
\label{figure:1}
\end{figure}
The purpose of this test is to see how the error behavior fits Theorem \ref%
{teorema1}. To do so, we shall mainly focus on the error as a function of
the parameter $\Delta t$. Since this theorem is valid under the assumption
that the integrals,
\begin{equation}
\int_{D}c_{h}^{n-1}\circ X^{n,n-1}(x)\phi _{i}(x)dx,  \label{gal_proj}
\end{equation}%
are calculated exactly, then we carry our goal out by using symmetric Gauss
quadrature rules of different orders of accuracy to evaluate such integrals;
in doing so, we assess the influence of the order of the quadrature rule on
the accuracy and stability of the numerical solution.

Figure 2 shows the $L^{2}$-norm of the error as a function of the time step $%
\Delta t$ in two meshes with $h=0.05$ and $h=0.025$ respectively. The errors
are calculated after one revolution, $T=1$, of the hump using quadrature
rules for the Galerkin projection (\ref{gal_proj}) of $7$, $16$, $25$, and $%
42$ points which are exact for polynomials of degree $5$, $8$, $10$, and $14$
respectively, see \cite{du}. Broken lines correspond to the error function
of linear polynomials ($m=1$), and full lines to the error function of
quadratic polynomials ($m=2$). By inspection, we notice the following items:
(a) for quadrature rules of high order, i.e., quadrature rules of 16, 25,
and 42 points, there is a value $\Delta t_{s}$, such that for $\Delta
t>\Delta t_{s}$, the error grows with a rate tending toward $O(\Delta
t^{-1/2})$ as $\Delta t$ decreases; the error tendency of the most accurate
rule of 42 points is closer to $O(\Delta t^{-1/2})$ than the error tendency
of the other two rules. On the other hand, for $0<\Delta t\leq \Delta t_{s}$%
, the error remains almost constant and independent of $\Delta t$. (b) For
quadrature rules that are not sufficiently accurate, i.e., the quadrature
rule of 7 points, there is a value $\Delta t_{ins}\gg \Delta t_{s}$ at which
the error starts growing very fast as $\Delta t$ decreases until it reaches
a maximum or eventually the numerical solution may become extremely large at
$\Delta t^{\ast }$. For $\Delta t_{s}<\Delta t<\Delta t^{\ast }$ the error
decreases and when $0<\Delta t\leq \Delta t_{s}$ the error remains constant.
This strange behavior of the solution for the quadrature rule of 7 points,
which illustrates the dependence of the stability of the LG method upon the
order of the quadrature rule, is a well known feature reported by many
authors, see for instance \cite{MPS}; in our tests, we note that the
instability with quadratic polynomials sends the numerical solution to
infinity in an interval of values of $\Delta t$, whereas for linear
polynomials the numerical solution, though useless, remains bounded.

\begin{figure}[th!]
\begin{center}
\scalebox{0.5}{\includegraphics{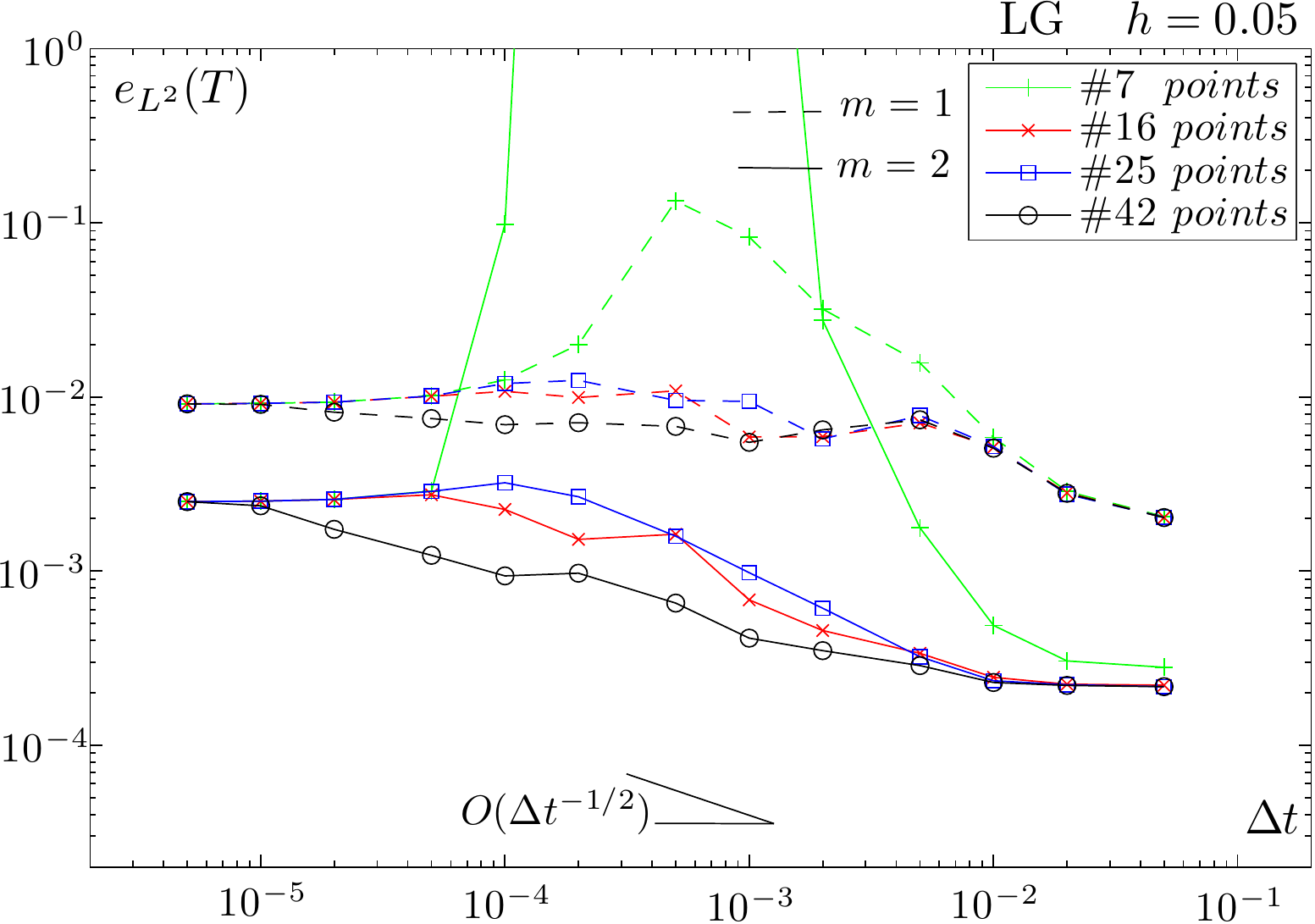}}\ \ \ %
\scalebox{0.5}{\includegraphics{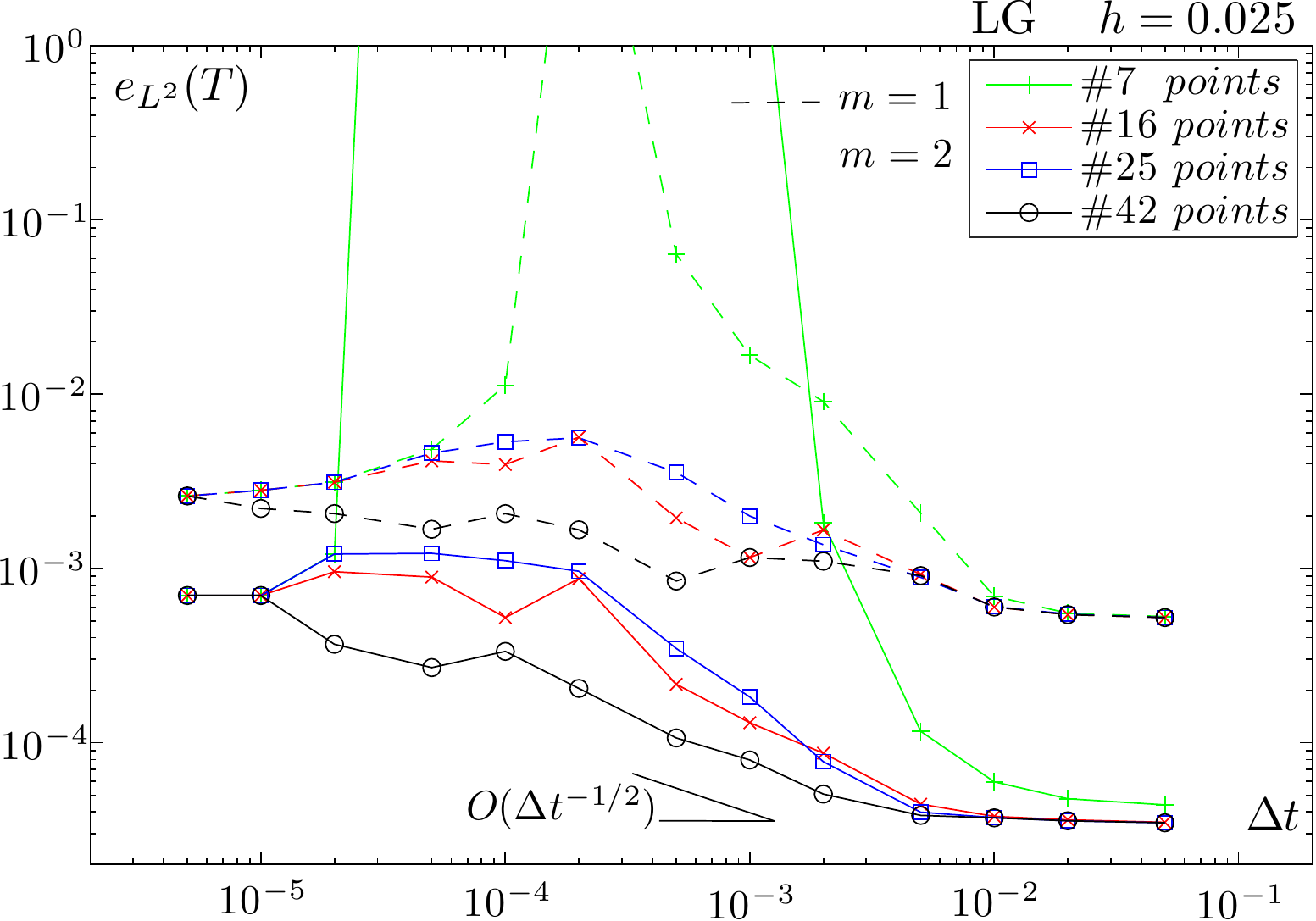}}
\end{center}
\caption{$L^2$-norm of the error of the conventional LG method in the
rotating hump problem, for linear $m=1$ and quadratic $m=2$ finite elements
in two different meshes ($h=0.05$ and $h=0.025$).}
\label{figure:2}
\end{figure}
Other relevant results displayed in Figure 2 are the following: (c) provided
that the integrals (\ref{gal_proj}) are evaluated with enough accuracy, the
numerical solutions are stable either for large or very small values of $%
\Delta t$, and as Theorem \ref{teorema1} says, the error is $O(h^{m+1}/{%
\Delta t^{1/2}})$ in the first case and $O(h^{m})$ in the second one, with
the particularity that in both cases the error does not depend very much
upon the order of the quadrature rule used to calculate (\ref{gal_proj}) as
long as such a rule is exact for polynomials of degree $>2(m+1)$. (d) The
error does not grow monotonically, though we notice that the higher the
order of the quadrature rule the smoother the growth of the error; however,
we can not explain why the rule of order $8$ ($16$ points) gives for some
values of $\Delta t$ smaller errors than the rule of order $10$ ($25$
points).

\section{The LPS-LG method}

To formulate the local projection stabilized Lagrange-Galerkin (LPS-LG)
method we introduce additional concepts. Besides the partition $D_{h}$, we
consider another quasi-uniform regular partition $\mathcal{M}_{h}$ on $D$
the elements of which are termed macro-elements. Each macro-element $M$ is
decomposed into one or more elements $K$ of the partition $D_{h}$ (the case $%
\mathcal{M}_{h}=D_{h}$ is allowed giving place to the so-called one-level
LPS approach). We assume that there exist positive constants $\gamma _{1}$
and $\gamma _{2}$ such that for all $K\subset D_{h}$ and $M\subset \mathcal{M%
}_{h}$, $\gamma _{1}h_{M}\leq h_{K}\leq \gamma _{2}h_{M}$. Next, we consider
a discontinuous finite element space $G_{h}$ associated with $\mathcal{M}%
_{h} $ and set $G_{h}(M):=\left\{ q_{h}\mid _{M}:q_{h}\in G_{h}\right\} $.
For each $M$, we use the local $L^{2}$-projector $\pi
_{M}:L^{2}(M)\rightarrow G_{h}(M)$ to define the fluctuation operator $%
\kappa _{M}:=id-\pi _{M}$, where $id:=L^{2}(M)\rightarrow L^{2}(M)$ is the
identity operator. In addition to the approximation properties (\ref{eq4a})-(%
\ref{eq4e}), we make the following assumptions.

\textbf{Assumption LPS1} \textit{Let }$s\in (0,\ldots ,m-1)$ \textit{be the
degree of the polynomials of the space }$G_{h}$, \textit{the fluctuation
operator} $\kappa _{M}$ \textit{satisfies the approximation property}%
\begin{equation}
\left\Vert \kappa _{M}w\right\Vert _{L^{2}(M)}\leq Ch_{M}^{l}\left\Vert
w\right\Vert _{H^{l}(M)},\ \forall w\in H^{l}(M),\ 0\leq l\leq s+1.
\label{p3}
\end{equation}%
Let $P_{s}(M)$\textit{\ }be the set of polynomials of degree at most $s$
defined in $M$, then a sufficient condition for the assumption \textbf{LPS1}
to hold is $P_{s}(M)\subset G_{h}(M)$. We set $W_{h}(M):=\left\{ w_{h}\mid
_{M}:w_{h}\in W_{h},\ w_{h}=0\text{ \textrm{on\ }}D\backslash M\right\} $.

\textbf{Assumption LPS2 }\textit{There is an interpolation operator }$%
j_{h}:H^{1}(D)\rightarrow W_{h}$,\textit{\ such that for all } $(w,q_{h})\in
H^{1}(D)\times G_{h}$,%
\begin{equation}
\left( w-j_{h}w,q_{h}\right) =0,  \label{p4}
\end{equation}%
\textit{and for all} $w\in H^{l}(D)$, \textit{with }$1\leq l\leq m+1$
\textit{and } $M\in \mathcal{M}_{h}$,
\begin{equation}
\left\Vert w-j_{h}w\right\Vert _{L^{2}(M)}+h_{M}\left\Vert \nabla
(w-j_{h}w)\right\Vert _{L^{2}(M)}\leq Ch_{M}^{l}\left\Vert w\right\Vert
_{H^{l}(\Lambda (M))},  \label{p5}
\end{equation}%
\textit{where }$\Lambda (M)$\textit{\ denotes a neighborhood of} $M$.

The existence of $j_{h}$ has been proven in Part III Chapter 3 of \cite{RST}
for spaces $G_{h}$ and $W_{h}$ that satisfy the following inf-sup condition :%
\begin{equation*}
\inf_{q_{h}\in G_{h}(M)}\sup_{w_{h}\in W_{h}(M)}\frac{\left(
q_{h},w_{h}\right) _{M}}{\left\Vert q_{h}\right\Vert _{L^{2}(M)}\left\Vert
w_{h}\right\Vert _{L^{2}(M)}}\geq \beta >0,
\end{equation*}%
where $\beta $ is a constant independent of $h$. For simplicial meshes the
spaces $(W_{h},G_{h})$ are the following (see, \cite{RST} for details):

let
\begin{equation*}
\begin{array}{c}
P_{m,h}^{disc}:=\{v_{h}\in L^{2}(D):v\mid _{K}=\widehat{v}\circ
F_{K}^{-1}\in \widehat{P}_{m}(\widehat{K})\ \ \forall K\in D_{h}\}\ \
\mathrm{and} \\
\\
P_{m,2h}^{disc}:=\{v_{h}\in L^{2}(D):v\mid _{M}=\widehat{v}\circ
F_{M}^{-1}\in \widehat{P}_{m}(\widehat{M})\ \ \forall M\in \mathcal{M}_{h}\}%
\text{,}%
\end{array}%
\end{equation*}%
where $F_{M}:\widehat{M}\rightarrow M\in \mathcal{M}_{h}$ is the bijective
transformation and $\widehat{M}$ is the reference element for the partition $%
\mathcal{M}_{h}$. The continuous finite element space $P_{m,h}$ is defined
as $P_{m,h}:=P_{m,h}^{disc}\cap H^{1}(D)$. For the one-level approach:%
\begin{equation}
W_{h}=P_{m,h}^{+}:=P_{m,h}+\mathrm{span}_{K\in M_{h}}\{\Phi _{K}\cdot
P_{m-1,h}(K)\},\ \mathrm{and}\ G_{h}=P_{m-1,h}^{disc},  \label{p6}
\end{equation}%
here, $\Phi _{K}$ denotes the mapped bubble function that vanishes on the
boundary $\partial K$ of the element. For the two-level approach (the
elements $K\in T_{h}$ are obtained from the elements $M\in \mathcal{M}_{h}$
by means of a refinement criterium, see for instance \cite{BB} and \cite{GT}%
):%
\begin{equation}
W_{h}=P_{m,h}\text{,}\ \mathrm{and}\text{ }G_{h}=P_{m-1,2h}^{disc}\text{.}
\end{equation}
Figure \ref{figure:lps1} illustrates these approaches for $d=2$ and
simplicial meshes. 
\begin{figure}[th!]
\begin{center}
\scalebox{0.5}{\includegraphics{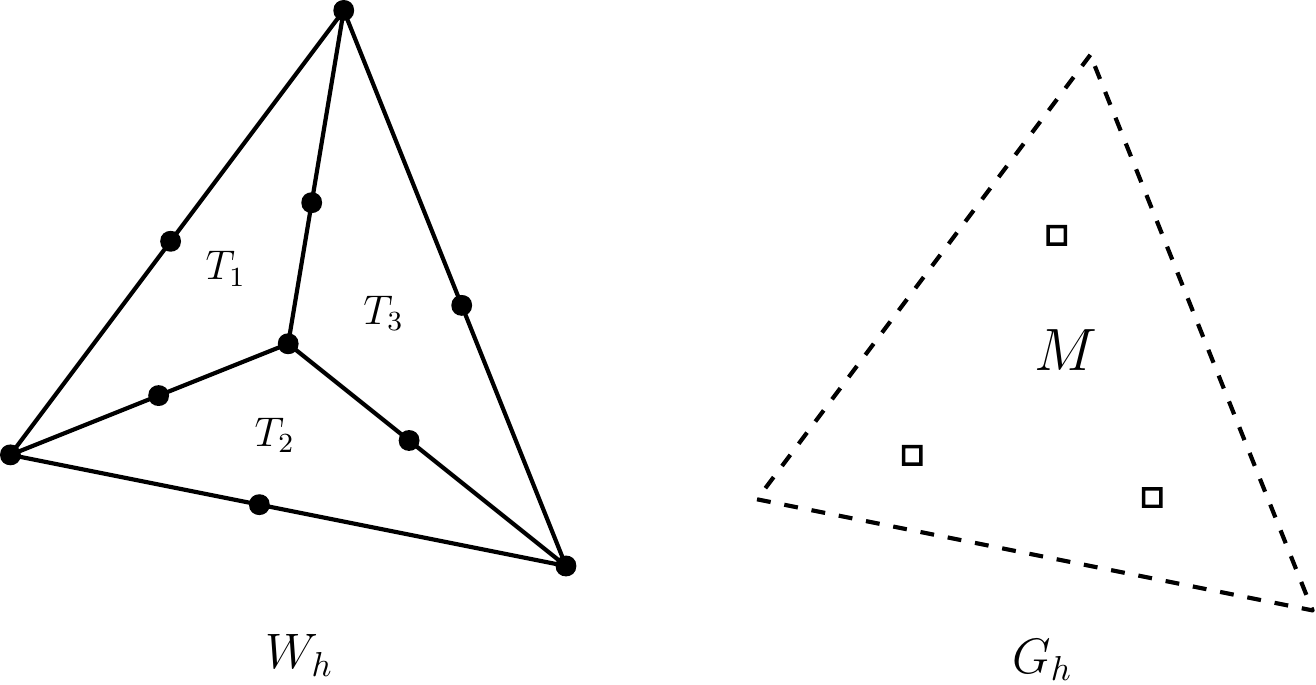}}\ \ \ \scalebox{0.5}{%
\includegraphics{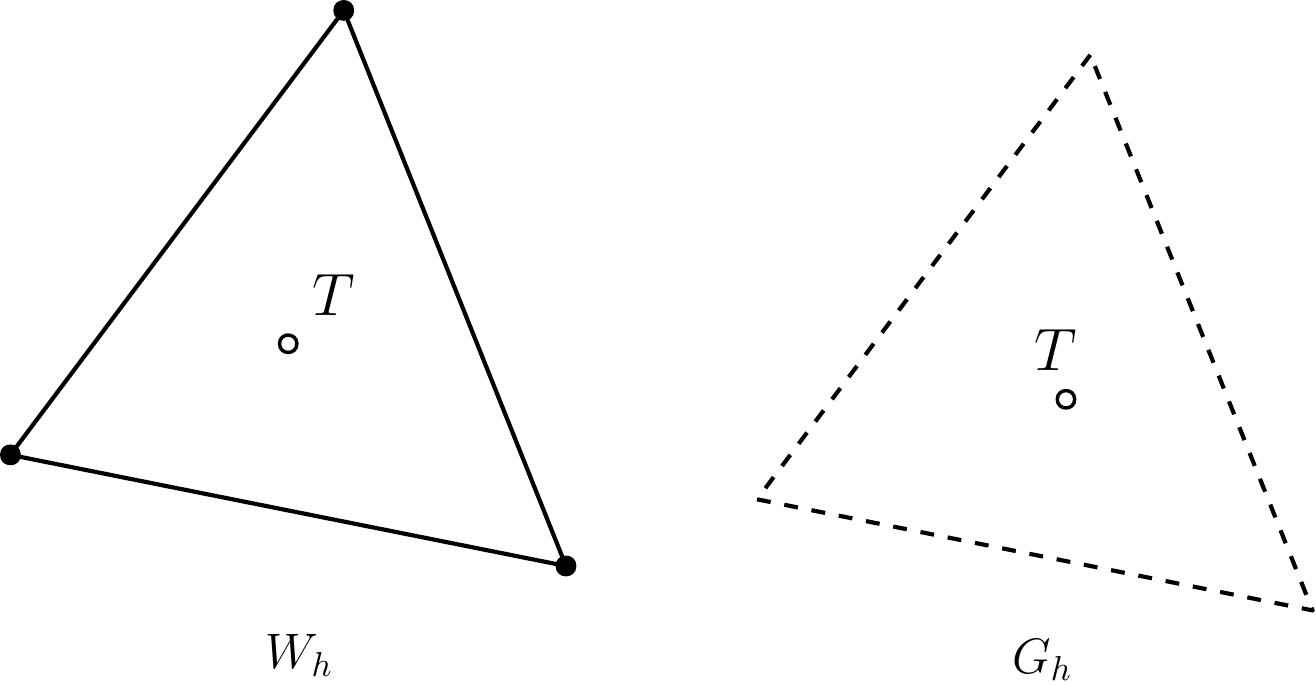}}
\end{center}
\caption{Approximation and projection spaces. Left panel, the two-level
approach with $m=2$; right panel, the one-level approach with $m=1$.}
\label{figure:lps1}
\end{figure}

The LPS Lagrange-Galerkin method calculates $c_{h}^{n}\in V_{h}$ as solution
of the equation%
\begin{equation}
(c_{h}^{n}-c_{h}^{\ast n-1},v_{h})+\Delta tS_{h}(c_{h}^{n},v_{h})=0\ \
\forall v_{h}\in V_{h},  \label{lps1}
\end{equation}%
where $S_{h}(c_{h}^{n},v_{h})$ is the stabilization term given by the
expression
\begin{equation}
S_{h}(c_{h}^{n},v_{h})=\sum_{M}\tau _{M}(\kappa _{M}\nabla c_{h}^{n},\kappa
_{M}\nabla v_{h})_{M},  \label{lps2}
\end{equation}%
here, $(\kappa _{M}\nabla c_{h}^{n},\kappa _{M}\nabla
v_{h})_{M}:=\int_{M}\kappa _{M}\nabla c_{h}^{n}\cdot\kappa _{M}\nabla
v_{h}dx $ and $\mathbf{\tau }_{M}$ are element-wise constant coefficients
that depend on the diameter $h_{M}$ of the macro-elements, their optimal
values are determined by the error analysis.

\begin{remark}
For $v_{h}=c_{h}^{n}$ the term $S_{h}(c_{h}^{n},v_{h})$ can be written as a
diffusion term of the form%
\begin{equation*}
S_{h}(c_{h}^{n},c_{h}^{n})=\sum_{M}\tau _{M}\left\Vert \mathbf{\kappa }%
_{M}\nabla c_{h}^{n}\right\Vert _{\mathbf{L}^{2}(M)}^{2}:=\nu _{\text{add}%
}(c_{h}^{n})\left\Vert \nabla c_{h}^{n}\right\Vert ^{2},
\end{equation*}%
where%
\begin{equation*}
\nu _{\text{add}}(c_{h}^{n}):=\left\{
\begin{array}{l}
\dfrac{\sum_{M}\tau _{M}\left\Vert \mathbf{\kappa }_{M}\nabla
c_{h}^{n}\right\Vert _{\mathbf{L}^{2}(M)}^{2}}{\left\Vert \nabla
c_{h}^{n}\right\Vert ^{2}}\ \mathrm{when}\text{ }\left\Vert \nabla
c_{h}^{n}\right\Vert \neq 0\text{,} \\
\\
0\text{\ \ \ \ otherwise.}%
\end{array}%
\right. .
\end{equation*}
\end{remark}

\subsection{Analysis of the LPS-LG method}

We prove the stability of the LPS-LG method in the mesh dependent norm%
\begin{equation}
\left\vert \left\vert \left\vert v^{n}\right\vert \right\vert \right\vert
=\left( \left\vert \left\vert v^{n}\right\vert \right\vert ^{2}+\Delta
t\sum_{j=1}^{n}S_{h}(v^{j},v^{j})\right) ^{1/2},  \label{lps5}
\end{equation}%
where $v^{j}\in H_{0}^{1}(D)$ $(j=1,..,n)$, $n$ being a positive integer. We
have the following result.

\begin{lemma}
\label{stablps} For all $N\geq 1$ it holds
\begin{equation}
\left\vert \left\vert \left\vert c_{h}^{N}\right\vert \right\vert
\right\vert ^{2}+\sum_{i=1}^{N}\left\Vert c_{h}^{i}-c_{h}^{\ast
i-1}\right\Vert ^{2}\leq \left\Vert c_{h}^{o}\right\Vert ^{2}.  \label{lp6}
\end{equation}
\end{lemma}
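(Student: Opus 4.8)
The plan is to mimic exactly the stability argument of Lemma~\ref{stability}, with the only new ingredient being the nonnegativity of the stabilization bilinear form. First I would test the LPS-LG equation~(\ref{lps1}) against $v_h=c_h^n\in V_h$ itself, obtaining
\begin{equation*}
(c_h^n-c_h^{\ast n-1},c_h^n)+\Delta t\,S_h(c_h^n,c_h^n)=0.
\end{equation*}
Then I would apply the elementary identity $2(a-b,a)=\|a\|^2+\|a-b\|^2-\|b\|^2$ to the first term and invoke~(\ref{eq6.0}) (valid since $\mathrm{div}\,\mathbf{u}=0$ makes the Jacobian equal to $1$ a.e.) to replace $\|c_h^{\ast n-1}\|$ by $\|c_h^{n-1}\|$. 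This yields
\begin{equation*}
\|c_h^n\|^2+\|c_h^n-c_h^{\ast n-1}\|^2-\|c_h^{n-1}\|^2+2\Delta t\,S_h(c_h^n,c_h^n)=0.
\end{equation*}

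Next I would sum this identity from $n=1$ to $n=N$, so that the $\|c_h^{n-1}\|^2$ terms telescope, leaving
\begin{equation*}
\|c_h^N\|^2+\sum_{n=1}^N\|c_h^n-c_h^{\ast n-1}\|^2+2\Delta t\sum_{n=1}^N S_h(c_h^n,c_h^n)=\|c_h^0\|^2.
\end{equation*}
Finally I would recognize from the definition~(\ref{lps5}) of the mesh-dependent norm that $\||| c_h^N |||^2=\|c_h^N\|^2+\Delta t\sum_{j=1}^N S_h(c_h^j,c_h^j)$, so one copy of $\Delta t\sum_{n=1}^N S_h(c_h^n,c_h^n)$ is absorbed into $\||| c_h^N |||^2$ and the remaining copy, being a sum over $M$ of terms $\tau_M\|\kappa_M\nabla c_h^n\|_{\mathbf{L}^2(M)}^2\ge 0$ with $\tau_M>0$, is nonnegative and can simply be dropped, producing the stated inequality~(\ref{lp6}).

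There is essentially no genuine obstacle here; the argument is a verbatim adaptation of Lemma~\ref{stability}. The only point requiring a word of justification is that $S_h(v,v)\ge 0$ for every $v\in V_h$, which is immediate from~(\ref{lps2}) once one notes the coefficients $\tau_M$ are positive. One should also remark, as the energy identity above shows, that in fact equality holds with an extra dissipative term $\Delta t\sum_{n=1}^N S_h(c_h^n,c_h^n)$ on the left, so~(\ref{lp6}) is not sharp; the inequality form is stated because that is what is needed downstream. A minor bookkeeping check is that the factor $2\Delta t$ coming from the identity is exactly what is required to both feed the $\Delta t\sum S_h$ hidden inside $||| \cdot |||$ and leave a nonnegative leftover, so no constants are lost.
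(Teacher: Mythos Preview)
Your proposal is correct and follows essentially the same approach as the paper's own proof: test \eqref{lps1} with $v_h=c_h^n$, use the identity $2(a-b,a)=\|a\|^2+\|a-b\|^2-\|b\|^2$ together with \eqref{eq6.0}, sum and telescope, then absorb one copy of $\Delta t\sum S_h$ into the triple norm and drop the other by nonnegativity. The only addition in your write-up is the explicit remark that $S_h(v,v)\ge0$ and the bookkeeping on the factor $2$, which the paper leaves implicit.
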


\begin{proof}
Let $v_{h}=c_{h}^{n}$ in (\ref{lps1}), then it follows that%
\begin{equation*}
\left\Vert c_{h}^{n}\right\Vert ^{2}+\left\Vert c_{h}^{n}-c_{h}^{\ast
n-1}\right\Vert ^{2}-\left\Vert c_{h}^{\ast n-1}\right\Vert ^{2}+2\Delta
tS_{h}(c_{h}^{n},c_{h}^{n})=0.
\end{equation*}%
Noting that by virtue of (\ref{eq6.0}), $\left\Vert c_{h}^{\ast
n-1}\right\Vert ^{2}=\left\Vert c_{h}^{n-1}\right\Vert ^{2}$, then summing
from $n=1$ up to $n=N\geq 1$ yields%
\begin{equation*}
\left\Vert c_{h}^{N}\right\Vert ^{2}+\sum_{n=1}^{N}\left\Vert
c_{h}^{n}-c_{h}^{\ast n-1}\right\Vert ^{2}+2\Delta t\sum_{n=1}^{N}
S_{h}(c_{h}^{n},c_{h}^{n})=\left\Vert c_{h}^{0}\right\Vert ^{2}.
\end{equation*}
Hence, (\ref{lp6}) follows.
\end{proof}

Next, we perform the error analysis. To do so, we again decompose the error
function $e^{n}:=c^{n}-c_{h}^{n}$ as%
\begin{equation}
e^{n}=(c^{n}-P_{h}c^{n})+(P_{h}c^{n}-c_{h}^{n})\equiv \rho ^{n}+\theta
_{h}^{n}.  \label{lps7}
\end{equation}

First, we calculate an estimate for $\left\vert \left\vert \left\vert \rho
^{L}\right\vert \right\vert \right\vert $.

\begin{lemma}
\label{rholps} Let $c\in L^{\infty }(H^{m+1}(D)\cap H_{0}^{1}(D))$. Then,
for all $0\leq L\leq N$. it follows that there exists a constant $C$
independent of $h$, $\Delta t$ and $L$ such that%
\begin{equation}
\left\vert \left\vert \left\vert \rho ^{L}\right\vert \right\vert
\right\vert \leq C( h+\tau _{\max }^{1/2}) h^{m}\left\Vert c\right\Vert
_{L^{\infty }(H^{m+1}(D))},  \label{lps8}
\end{equation}%
where $\tau _{\max }=\max_{M\in \mathcal{M}_{h}}(\tau _{M})$.
\end{lemma}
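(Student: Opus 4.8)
The plan is to recall the definition of the triple-bar norm~(\ref{lps5}) applied to $\rho^{L}$, namely
\begin{equation*}
\left\vert \left\vert \left\vert \rho^{L}\right\vert \right\vert \right\vert^{2}=\left\Vert \rho^{L}\right\Vert^{2}+\Delta t\sum_{j=1}^{L}S_{h}(\rho^{j},\rho^{j}),
\end{equation*}
and bound the two pieces separately. For the first piece, the standard $L^{2}$-projection estimate~(\ref{eq9.1}) (the $l=0$ case) gives $\left\Vert \rho^{j}\right\Vert\leq c_{2}h^{m+1}\left\vert c^{j}\right\vert_{H^{m+1}(D)}\leq Ch^{m+1}\left\Vert c\right\Vert_{L^{\infty}(H^{m+1}(D))}$, which is already of the required order $h\cdot h^{m}$ (in fact one power of $h$ better), and in particular $\left\Vert \rho^{L}\right\Vert$ is controlled.

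The main work is the stabilization sum. Expanding~(\ref{lps2}) and using that $\kappa_{M}$ is an $L^{2}(M)$-bounded operator (indeed $\left\Vert \kappa_{M}w\right\Vert_{L^{2}(M)}\leq\left\Vert w\right\Vert_{L^{2}(M)}$ since $\pi_{M}$ is an orthogonal projector), we get
\begin{equation*}
S_{h}(\rho^{j},\rho^{j})=\sum_{M}\tau_{M}\left\Vert \kappa_{M}\nabla\rho^{j}\right\Vert_{\mathbf{L}^{2}(M)}^{2}\leq\tau_{\max}\sum_{M}\left\Vert \nabla\rho^{j}\right\Vert_{\mathbf{L}^{2}(M)}^{2}=\tau_{\max}\left\Vert \nabla\rho^{j}\right\Vert^{2}.
\end{equation*}
Then the gradient estimate~(\ref{eq9.1}) with $l=1$ yields $\left\Vert \nabla\rho^{j}\right\Vert\leq c_{2}h^{m}\left\vert c^{j}\right\vert_{H^{m+1}(D)}\leq Ch^{m}\left\Vert c\right\Vert_{L^{\infty}(H^{m+1}(D))}$. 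Hence
\begin{equation*}
\Delta t\sum_{j=1}^{L}S_{h}(\rho^{j},\rho^{j})\leq\tau_{\max}\,\Delta t\sum_{j=1}^{L}\left\Vert \nabla\rho^{j}\right\Vert^{2}\leq\tau_{\max}\,L\,\Delta t\,C h^{2m}\left\Vert c\right\Vert_{L^{\infty}(H^{m+1}(D))}^{2}\leq C\,T\,\tau_{\max}h^{2m}\left\Vert c\right\Vert_{L^{\infty}(H^{m+1}(D))}^{2},
\end{equation*}
since $L\,\Delta t\leq N\Delta t=T$. Combining the two bounds,
\begin{equation*}
\left\vert \left\vert \left\vert \rho^{L}\right\vert \right\vert \right\vert^{2}\leq C h^{2m+2}\left\Vert c\right\Vert_{L^{\infty}(H^{m+1}(D))}^{2}+C\tau_{\max}h^{2m}\left\Vert c\right\Vert_{L^{\infty}(H^{m+1}(D))}^{2}\leq C(h^{2}+\tau_{\max})h^{2m}\left\Vert c\right\Vert_{L^{\infty}(H^{m+1}(D))}^{2},
\end{equation*}
and taking square roots, together with $\sqrt{a+b}\leq\sqrt a+\sqrt b$, gives~(\ref{lps8}).

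I do not expect any genuine obstacle here; the only point requiring a little care is making sure the constant $C$ is independent of $L$, which is why it is essential to use $L\Delta t\leq T$ rather than keeping an $L$- or $N$-dependent factor, and to note the quasi-uniformity $\gamma_{1}h_{M}\leq h_{K}\leq\gamma_{2}h_{M}$ only implicitly (it is not even needed for this particular estimate, since we bound $\tau_M$ by $\tau_{\max}$ directly). One could alternatively phrase the gradient bound via~(\ref{eq4d}) applied macro-element-wise and sum, but summing $\left\Vert \nabla\rho^{j}\right\Vert_{\mathbf{L}^{2}(M)}^{2}$ over $M$ to recover the global norm is cleaner.
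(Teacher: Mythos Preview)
Your proof is correct and follows essentially the same approach as the paper: split the triple-bar norm into the $L^2$ piece and the stabilization sum, bound $\|\rho^L\|$ via the projection estimate~(\ref{eq4d}), and control $S_h(\rho^j,\rho^j)$ by pulling out $\tau_{\max}$, using the $L^2(M)$-boundedness of $\kappa_M$, and applying the gradient estimate $\|\nabla\rho^j\|\leq Ch^m|c^j|_{H^{m+1}}$. The only (cosmetic) differences are that the paper uses the cruder bound $\|\kappa_M w\|\leq 2\|w\|$ from the triangle inequality (yielding a harmless factor~$4$), and that you make the step $L\Delta t\leq T$ explicit whereas the paper suppresses it.
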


\begin{proof}
We recall that%
\begin{equation*}
\left\vert \left\vert \left\vert \rho ^{L}\right\vert \right\vert
\right\vert ^{2}=\left\Vert \rho ^{L}\right\Vert ^{2}+\Delta
t\sum_{n=0}^{L}S_{h}(\rho ^{n},\rho ^{n}).
\end{equation*}%
So, by virtue of (\ref{eq4d}) it follows that for all $L$ there is a
constant independent of $h$, such that%
\begin{equation*}
\left\Vert \rho ^{L}\right\Vert \leq Ch^{m+1}\left\vert c\right\vert
_{L^{\infty }(H^{m+1}(D))}.
\end{equation*}%
Next, we estimate the term $S_{h}(\rho ^{n},\rho ^{n})$. Making use of the
triangle inequality, the contractiveness property of the local $L^{2}$%
-projector $\pi _{M}$, and (\ref{eq4d}) we obtain that%
\begin{equation}
\begin{array}{r}
S_{h}(\rho ^{n},\rho ^{n})=\sum_{M}\tau _{M}\left\Vert \kappa _{M}\nabla
\rho ^{n}\right\Vert _{L^{2}(M)}^{2}\leq 4\sum_{M}\tau _{M}\left\Vert \nabla
\rho ^{n}\right\Vert _{L^{2}(M)}^{2} \\
\\
\leq C\tau _{\max }h^{2m}\left\Vert c\right\Vert _{L^{\infty }\left(
H^{m+1}(D)\right) }^{2}.%
\end{array}
\label{lps81}
\end{equation}%
Hence, collecting these two estimates the result (\ref{lps8}) follows.
\end{proof}

We are ready to establish the convergence of the LPS-LG method.

\begin{theorem}
\label{teorlps} Under the assumptions of Theorem \ref{teorema1}, there
exists a constant $C$ independent of $h$, $\Delta t$ and $L$, such that for
all $L $, $0\leq L\leq N$,
\begin{equation}
\max_{0\leq L\leq N}\left\vert \left\vert \left\vert e^{L}\right\vert
\right\vert \right\vert \leq \left\vert \left\vert \left\vert
e^{0}\right\vert \right\vert \right\vert +C_{3}\left( \tau _{\max
}^{1/2}(h^{m}+h^{s+1})+h^{m+1}+\min \left( 1,\frac{\Delta t^{1/2}\left\Vert
\mathbf{u}\right\Vert _{L^{\infty }(L^{2}(D))}}{h}\right) \left( \frac{%
h^{m+1}}{\Delta t^{1/2}}\right) \right) .  \label{lps82}
\end{equation}
\end{theorem}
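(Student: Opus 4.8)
The plan is to decompose the error as in (\ref{lps7}), $e^{n}=\rho^{n}+\theta_{h}^{n}$ with $\theta_{h}^{n}\in V_{h}$, to invoke Lemma \ref{rholps} for $\left\vert\left\vert\left\vert\rho^{L}\right\vert\right\vert\right\vert$, and to concentrate the work on $\left\vert\left\vert\left\vert\theta_{h}^{L}\right\vert\right\vert\right\vert$. First I would write down the error equation: since $c^{n}=c^{\ast n-1}$ by (\ref{eq4}), the exact solution satisfies $(c^{n}-c^{\ast n-1},v_{h})=0$ with no stabilization contribution, so subtracting (\ref{lps1}) gives $(e^{n}-e^{\ast n-1},v_{h})=\Delta tS_{h}(c_{h}^{n},v_{h})$ for all $v_{h}\in V_{h}$; substituting $e^{n}=\rho^{n}+\theta_{h}^{n}$, $e^{\ast n-1}=\rho^{\ast n-1}+\theta_{h}^{\ast n-1}$ and $c_{h}^{n}=P_{h}c^{n}-\theta_{h}^{n}$ this becomes
\[
(\theta_{h}^{n}-\theta_{h}^{\ast n-1},v_{h})+\Delta tS_{h}(\theta_{h}^{n},v_{h})=-(\rho^{n}-\rho^{\ast n-1},v_{h})+\Delta tS_{h}(P_{h}c^{n},v_{h}),\qquad v_{h}\in V_{h}.
\]

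Then I would take $v_{h}=\theta_{h}^{n}$. Using $2(a-b,a)=\left\Vert a\right\Vert^{2}+\left\Vert a-b\right\Vert^{2}-\left\Vert b\right\Vert^{2}$ together with $\left\Vert\theta_{h}^{\ast n-1}\right\Vert=\left\Vert\theta_{h}^{n-1}\right\Vert$ (from (\ref{eq6.0})), the left-hand side produces $\left\Vert\theta_{h}^{n}\right\Vert^{2}+\left\Vert\theta_{h}^{n}-\theta_{h}^{\ast n-1}\right\Vert^{2}-\left\Vert\theta_{h}^{n-1}\right\Vert^{2}+2\Delta tS_{h}(\theta_{h}^{n},\theta_{h}^{n})$. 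On the right-hand side, $(\rho^{n}-\rho^{n-1},\theta_{h}^{n})=0$ because $\rho^{k}\perp V_{h}$, so only $-(\rho^{n-1}-\rho^{\ast n-1},\theta_{h}^{n})$ remains from the consistency term, and this is bounded exactly as in the passage (\ref{roro*})--(\ref{12}) of Theorem \ref{teorema1}: by Cauchy's inequality it is at most $\frac{2}{\Delta t}\left\Vert\rho^{n-1}-\rho^{\ast n-1}\right\Vert^{2}+\frac{\Delta t}{2}\left\Vert\theta_{h}^{n}\right\Vert^{2}$ with $\left\Vert\rho^{n-1}-\rho^{\ast n-1}\right\Vert^{2}\leq C\Delta t^{2}\left\Vert\mathbf{u}\right\Vert_{L^{\infty}(L^{\infty}(D)^{d})}^{2}h^{2m}\left\vert c\right\vert_{L^{\infty}(H^{m+1}(D))}^{2}$. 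For the stabilization term, the Cauchy--Schwarz inequality for the positive semidefinite form $S_{h}$ gives $2\Delta tS_{h}(P_{h}c^{n},\theta_{h}^{n})\leq\Delta tS_{h}(P_{h}c^{n},P_{h}c^{n})+\Delta tS_{h}(\theta_{h}^{n},\theta_{h}^{n})$, and the last summand is absorbed by the $2\Delta tS_{h}(\theta_{h}^{n},\theta_{h}^{n})$ coming from the left. It then remains to bound $S_{h}(P_{h}c^{n},P_{h}c^{n})$: writing $\kappa_{M}\nabla P_{h}c^{n}=\kappa_{M}\nabla c^{n}-\kappa_{M}\nabla\rho^{n}$, the $\rho^{n}$-part is estimated as in (\ref{lps81}) by $C\tau_{\max}h^{2m}\left\Vert c\right\Vert_{L^{\infty}(H^{m+1}(D))}^{2}$, while for the other part Assumption \textbf{LPS1} applied with $l=s+1$ (admissible since $s+1\leq m$ and $c^{n}\in H^{s+2}(D)$) gives $\sum_{M}\tau_{M}\left\Vert\kappa_{M}\nabla c^{n}\right\Vert_{L^{2}(M)}^{2}\leq C\tau_{\max}h^{2(s+1)}\left\Vert c\right\Vert_{L^{\infty}(H^{m+1}(D))}^{2}$. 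Summing $n=1,\dots,L$, using $\Delta tL\leq T$ and the discrete Gronwall inequality to dispose of the $\frac{\Delta t}{2}\sum\left\Vert\theta_{h}^{n}\right\Vert^{2}$ term, I obtain
\[
\left\vert\left\vert\left\vert\theta_{h}^{L}\right\vert\right\vert\right\vert^{2}\leq C\Big(\left\vert\left\vert\left\vert e^{0}\right\vert\right\vert\right\vert^{2}+\left\Vert\mathbf{u}\right\Vert^{2}h^{2m}\left\vert c\right\vert^{2}+\tau_{\max}(h^{2m}+h^{2(s+1)})\left\Vert c\right\Vert^{2}\Big),
\]
and since $\left\Vert\mathbf{u}\right\Vert h^{m}=\frac{\Delta t^{1/2}\left\Vert\mathbf{u}\right\Vert}{h}\cdot\frac{h^{m+1}}{\Delta t^{1/2}}$, adding Lemma \ref{rholps} yields (\ref{lps82}) but with $\frac{\Delta t^{1/2}\left\Vert\mathbf{u}\right\Vert}{h}\cdot\frac{h^{m+1}}{\Delta t^{1/2}}$ in place of the minimum.

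To turn this into the stated minimum I would, in parallel, derive an $O(h^{m+1}/\Delta t^{1/2})$ bound in the triple norm, following the idea behind (\ref{10a3}). Starting from $\left\Vert e^{n}\right\Vert^{2}+\left\Vert e^{n}-e^{\ast n-1}\right\Vert^{2}-\left\Vert e^{n-1}\right\Vert^{2}=2(e^{n}-e^{\ast n-1},\rho^{n})+2\Delta tS_{h}(c_{h}^{n},\theta_{h}^{n})$ (the $\theta_{h}^{n}$-component of $e^{n}$ again handled through the error equation), I bound $2(e^{n}-e^{\ast n-1},\rho^{n})\leq\frac{1}{2}\left\Vert e^{n}-e^{\ast n-1}\right\Vert^{2}+2\left\Vert\rho^{n}\right\Vert^{2}$ and, using $c_{h}^{n}=P_{h}c^{n}-\theta_{h}^{n}$ and Cauchy--Schwarz for $S_{h}$, $2\Delta tS_{h}(c_{h}^{n},\theta_{h}^{n})\leq-\Delta tS_{h}(c_{h}^{n},c_{h}^{n})+\Delta tS_{h}(P_{h}c^{n},P_{h}c^{n})$. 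Summing telescopes to
\[
\left\Vert e^{L}\right\Vert^{2}+\Delta t\sum_{n=1}^{L}S_{h}(c_{h}^{n},c_{h}^{n})\leq\left\Vert e^{0}\right\Vert^{2}+\frac{2T}{\Delta t}\left\Vert\rho\right\Vert_{l^{\infty}(L^{2}(D))}^{2}+\Delta t\sum_{n=1}^{L}S_{h}(P_{h}c^{n},P_{h}c^{n}),
\]
and then, using $S_{h}(\theta_{h}^{n},\theta_{h}^{n})^{1/2}\leq S_{h}(P_{h}c^{n},P_{h}c^{n})^{1/2}+S_{h}(c_{h}^{n},c_{h}^{n})^{1/2}$ and $S_{h}(e^{n},e^{n})^{1/2}\leq S_{h}(\rho^{n},\rho^{n})^{1/2}+S_{h}(\theta_{h}^{n},\theta_{h}^{n})^{1/2}$, every piece of $\Delta t\sum S_{h}(e^{n},e^{n})$ is already controlled, which gives $\left\vert\left\vert\left\vert e^{L}\right\vert\right\vert\right\vert^{2}\leq C(\left\vert\left\vert\left\vert e^{0}\right\vert\right\vert\right\vert^{2}+h^{2(m+1)}\left\vert c\right\vert^{2}/\Delta t+\tau_{\max}(h^{2m}+h^{2(s+1)})\left\Vert c\right\Vert^{2})$. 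Taking the smaller of the two bounds on the $\Delta t$-dependent term yields $\min(1,\Delta t^{1/2}\left\Vert\mathbf{u}\right\Vert/h)\,h^{m+1}/\Delta t^{1/2}$, and the maximum over $0\leq L\leq N$ completes the argument; the initial error is carried through with coefficient one exactly as in Theorem \ref{teorema1}.

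The step I expect to be the main obstacle is the treatment of the consistency error of the stabilization, i.e. bounding $S_{h}(P_{h}c^{n},P_{h}c^{n})$ — equivalently the cross term $\Delta tS_{h}(c_{h}^{n},\theta_{h}^{n})$ — in terms of $\left\Vert c\right\Vert_{H^{m+1}(D)}$. This is where Assumption \textbf{LPS1} with the sharp exponent $l=s+1$ is needed and where the $h^{s+1}$ (hence the exponent $\beta$ mentioned in the Introduction) comes in, and one must make sure that the extra $\Delta tS_{h}(\theta_{h}^{n},\theta_{h}^{n})$ produced by Cauchy--Schwarz is genuinely absorbed by the dissipation term on the left. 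A second, more routine delicate point is to keep, in the two parallel estimates, the two different bounds for $\left\Vert\rho^{n-1}-\rho^{\ast n-1}\right\Vert$ — the velocity-weighted one $\leq C\Delta t\left\Vert\mathbf{u}\right\Vert\left\Vert\nabla\rho^{n-1}\right\Vert$ and the trivial one $\leq 2\left\Vert\rho^{n-1}\right\Vert$ — so that the final minimum emerges.
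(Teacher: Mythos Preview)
Your proposal is correct and follows essentially the same strategy as the paper: derive two parallel estimates in the triple norm --- one giving $h^{m+1}/\Delta t^{1/2}$ via the pairing $(e^{n}-e^{\ast n-1},\rho^{n})$ absorbed by the jump term, the other giving $(\Delta t^{1/2}\Vert\mathbf{u}\Vert/h)\,h^{m+1}/\Delta t^{1/2}$ via the $\theta_{h}$-equation together with $(\rho^{n}-\rho^{n-1},\theta_{h}^{n})=0$ and the estimate (\ref{roro*}) for $\Vert\rho^{n-1}-\rho^{\ast n-1}\Vert$ --- and take the minimum. The only differences are cosmetic: you derive the velocity-weighted bound first and the $\Delta t^{-1/2}$ bound second (the paper does the opposite), and in the latter you organize the stabilization through $S_{h}(P_{h}c^{n},\cdot)$ and the identity $2S_{h}(c_{h}^{n},\theta_{h}^{n})\leq -S_{h}(c_{h}^{n},c_{h}^{n})+S_{h}(P_{h}c^{n},P_{h}c^{n})$, reconstructing $\Delta t\sum S_{h}(e^{n},e^{n})$ afterward, whereas the paper writes $c_{h}^{n}=c^{n}-e^{n}$ and obtains $S_{h}(e^{n},e^{n})$ on the left directly. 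One small inaccuracy in your closing remark: in the $h^{m+1}/\Delta t^{1/2}$ branch the term $\rho^{n-1}-\rho^{\ast n-1}$ never appears (so the ``trivial bound $\leq 2\Vert\rho^{n-1}\Vert$'' is not used); that branch relies solely on $\Vert\rho^{n}\Vert\leq Ch^{m+1}$.
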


\begin{proof}
From (\ref{eq4}) with $t=t_{n}$ and $\tau =-\Delta t$ and (\ref{lps1}) we
obtain the error equation
\begin{equation}
(e^{n}-e^{\ast n-1},v_{h})-\Delta tS_{h}(c_{h}^{n},v_{h})=0.  \label{lpse0}
\end{equation}%
Noting that $c_{h}^{n}=c^{n}-e^{n}$, we recast this equation as%
\begin{equation*}
(e^{n}-e^{\ast n-1},v_{h})+\Delta tS_{h}(e^{n},v_{h})=\Delta
tS_{h}(c^{n},v_{h}).
\end{equation*}%
Next, setting $v_{h}=\theta _{h}^{n}=e^{n}-\rho ^{n}$ and observing that $%
S_{h}(a+b,c)=S_{h}(a,c)+S_{h}(b,c)$ this equation becomes%
\begin{equation}
\begin{array}{l}
(e^{n}-e^{\ast n-1},e^{n})+\Delta tS_{h}(e^{n},e^{n})=(e^{n}-e^{\ast
n-1},\rho ^{n}) \\
\\
+\Delta tS_{h}(e^{n},\rho ^{n})+\Delta tS_{h}(e^{n},c^{n})-\Delta
tS_{h}(c^{n},\rho ^{n})\equiv \displaystyle\sum_{i=1}^{4}T_{i}%
\end{array}
\label{lpse1}
\end{equation}%
We estimate the terms $T_{i}$ of (\ref{lpse1}). Applying Cauchy-Schwarz
inequality we have that%
\begin{equation*}
\left\vert T_{1}\right\vert =\left\vert (e^{n}-e^{\ast n-1},\rho
^{n})\right\vert \leq \frac{1}{4}\left\Vert e^{n}-e^{\ast n-1}\right\Vert
^{2}+\left\Vert \rho ^{n}\right\Vert ^{2}.
\end{equation*}%
To estimate $T_{2}$ we apply again Cauchy-Schwarz inequality and obtain%
\begin{equation*}
\begin{array}{r}
\left\vert T_{2}\right\vert =\Delta t\left\vert S_{h}(e^{n},\rho
^{n})\right\vert \leq \Delta t\left( S_{h}(e^{n},e^{n})\right) ^{1/2}\left(
S_{h}(\rho ^{n},\rho ^{n})\right) ^{1/2} \\
\\
\leq (\delta /2)\Delta tS_{h}(e^{n},e^{n})+(2\delta )^{-1}\Delta tS_{h}(\rho
^{n},\rho ^{n}),\ \ 0<\delta <1.%
\end{array}%
\end{equation*}%
Similarly, we have that%
\begin{equation*}
\left\vert T_{3}\right\vert \leq (\delta /2)\Delta
tS_{h}(e^{n},e^{n})+(2\delta )^{-1}\Delta tS_{h}(c^{n},c^{n}),
\end{equation*}%
and%
\begin{equation*}
\left\vert T_{4}\right\vert \leq \frac{\Delta t}{2}S_{h}(c^{n},c^{n})+\frac{%
\Delta t}{2}S_{h}(\rho ^{n},\rho ^{n}).
\end{equation*}%
Substituting these estimates in (\ref{lpse1}) with $\delta =1/2$ and noting
that%
\begin{equation*}
2(e^{n}-e^{\ast n-1},e^{n})=\left\Vert e^{n}\right\Vert ^{2}+\left\Vert
e^{n}-e^{\ast n-1}\right\Vert ^{2}-\left\Vert e^{n-1}\right\Vert ^{2},
\end{equation*}%
we obtain that%
\begin{equation*}
\begin{array}{r}
\left\Vert e^{n}\right\Vert ^{2}+\frac{1}{2}\left\Vert e^{n}-e^{\ast
n-1}\right\Vert ^{2}-\left\Vert e^{n-1}\right\Vert ^{2}+\Delta
tS_{h}(e^{n},e^{n}) \\
\\
\leq 2\left\Vert \rho ^{n}\right\Vert ^{2}+3\Delta t\left(
S_{h}(c^{n},c^{n})+S_{h}(\rho ^{n},\rho ^{n})\right) .%
\end{array}%
\end{equation*}%
Summing both terms of this inequality from $n=1$ up to $n=N$ yields%
\begin{equation*}
\begin{array}{r}
\left\Vert e^{N}\right\Vert ^{2}+\frac{1}{2}\sum_{n=1}^{N}\left\Vert
e^{n}-e^{\ast n-1}\right\Vert ^{2}-\left\Vert e^{0}\right\Vert ^{2}+\Delta
t\sum_{n=1}^{N}S_{h}(e^{n},e^{n}) \\
\\
\leq \sum_{n=1}^{N}\left\Vert \rho ^{n}\right\Vert ^{2}+5\Delta
t\sum_{n=1}^{N}\left( S_{h}(c^{n},c^{n})+S_{h}(\rho ^{n},\rho ^{n})\right) .%
\end{array}%
\end{equation*}%
Since for any non negative integer $n$,%
\begin{equation*}
\left\Vert \rho ^{n}\right\Vert ^{2}\leq Ch^{2(m+1)}\left\vert c\right\vert
_{L^{\infty }(H^{m+1}(D))}^{2},
\end{equation*}%
by virtue of assumption \textbf{LPS1}

\begin{equation}
S_{h}(c^{n},c^{n})\leq C\tau _{\max }h^{2(s+1)}\left\vert c\right\vert
_{L^{\infty }(H^{s+2}(D))}^{2},  \label{lpse1.01}
\end{equation}%
and observing that $\left\Vert \kappa _{M}\nabla \rho ^{n}\right\Vert \leq
2\left\Vert \nabla \rho ^{n}\right\Vert $ because $\pi _{M}$ is contractive,
then (see (\ref{lps81}))%
\begin{equation}
S_{h}(\rho ^{n},\rho ^{n})\leq C\tau _{\max }h^{2m}\left\vert c\right\vert
_{L^{\infty }(H^{m+1}(D))}^{2},  \label{lpse1.02}
\end{equation}%
we have that%
\begin{equation*}
\begin{array}{r}
\left\Vert e^{N}\right\Vert ^{2}+\frac{1}{2}\sum_{n=1}^{N}\left\Vert
e^{n}-e^{\ast n-1}\right\Vert ^{2}-\left\Vert e^{0}\right\Vert ^{2}+\Delta
t\sum_{n=1}^{N}S_{h}(e^{n},e^{n}) \\
\\
\leq C\left( \tau _{\max }(h^{2m}+h^{2(s+1)})+\frac{h^{2(m+1)}}{\Delta t}%
\right) ,%
\end{array}%
\end{equation*}%
or equivalently%
\begin{equation}
\left\vert \left\vert \left\vert e^{N}\right\vert \right\vert \right\vert
^{2}+\frac{1}{2}\sum_{n=1}^{N}\left\Vert e^{n}-e^{\ast n-1}\right\Vert
^{2}\leq \left\vert \left\Vert e^{0}\right\Vert \right\vert ^{2}+C\left(
\tau _{\max }(h^{2m}+h^{2\left( s+1\right) })+\frac{h^{2\left( m+1\right) }}{%
\Delta t}\right) .  \label{lpse2.00}
\end{equation}%
Hence, it follows that%
\begin{equation}
\max_{0\leq L\leq N}\left\vert \left\vert \left\vert e^{L}\right\vert
\right\vert \right\vert \leq \left\vert \left\vert \left\vert
e^{0}\right\vert \right\vert \right\vert +C\left( \tau _{\max }^{1/2}\left(
h^{m}+h^{s+1}\right) +\frac{h^{m+1}}{\Delta t^{1/2}}\right) .  \label{lpse2}
\end{equation}%
This estimate of the error depends on $\Delta t^{-1/2}$ so that, for any
fixed $h$, is invalid when $\Delta t\rightarrow 0$ because in this case the
method does not converge. So, in order to get rid of the factor $\Delta
t^{-1/2}$ we consider the following approach. Starting with the error
equation (\ref{lpse0}) and setting%
\begin{equation*}
v_{h}=\theta _{h}^{n},\ e^{n}=\rho ^{n}+\theta _{h}^{n},\ e^{\ast n-1}=\rho
^{\ast n-1}+\theta _{h}^{\ast n-1}\ \mathrm{and\ }c_{h}^{n}=c^{n}-(\rho
^{n}+\theta _{h}^{n}),
\end{equation*}%
we get%
\begin{equation*}
\begin{array}{r}
\left( \theta _{h}^{n}-\theta _{h}^{\ast n-1},\theta _{h}^{n}\right) +\Delta
tS_{h}(\theta _{h}^{n},\theta _{h}^{n})=-(\rho ^{n}-\rho ^{\ast n-1},\theta
_{h}^{n}) \\
\\
+\Delta t\left( S_{h}(\rho ^{n},\theta _{h}^{n})-S_{h}(c^{n},\theta
_{h}^{n})\right) .%
\end{array}%
\end{equation*}%
Now, noticing that $\rho ^{n}-\rho ^{\ast n-1}=\rho ^{n}-\rho ^{n-1}-\left(
\rho ^{\ast n-1}-\rho ^{n-1}\right) $ and $\left( \rho ^{n}-\rho
^{n-1},\theta _{h}^{n}\right) =0$, we can write the above equation as%
\begin{equation}
\begin{array}{l}
\frac{1}{2}\left\Vert \theta _{h}^{n}\right\Vert ^{2}+\frac{1}{4}\left\Vert
\theta _{h}^{n}-\theta _{h}^{\ast n-1}\right\Vert ^{2}-\frac{1}{2}\left\Vert
\theta _{h}^{n-1}\right\Vert ^{2}+\Delta tS_{h}(\theta _{h}^{n},\theta
_{h}^{n}) \\
\\
\leq -(\rho ^{n-1}-\rho ^{\ast n-1},\theta _{h}^{n})+\Delta t\left(
S_{h}(\rho ^{n},\theta _{h}^{n})-S_{h}(c^{n},\theta _{h}^{n})\right) \equiv
\sum_{i=1}^{3}S_{i}.%
\end{array}
\label{lpse3}
\end{equation}%
We bound the terms $S_{i}$ on the right hand side of (\ref{lpse3}). Thus, by
the Cauchy-Schwarz inequality we have that%
\begin{equation*}
\left\vert S_{1}\right\vert \leq \frac{1}{\Delta t}\left\Vert \rho
^{n-1}-\rho ^{\ast n-1}\right\Vert ^{2}+\frac{\Delta t}{4}\left\Vert \theta
_{h}^{n}\right\Vert ^{2};
\end{equation*}%
since, see (\ref{roro*}),
\begin{equation*}
\frac{1}{\Delta t}\left\Vert \rho ^{n-1}-\rho ^{\ast n-1}\right\Vert
^{2}\leq \Delta tC\left( \frac{\Delta t^{1/2}\left\Vert \mathbf{u}%
\right\Vert _{L^{\infty }(L^{\theta }(D))}}{h}\right) ^{2}\left( \frac{%
h^{m+1}}{\Delta t^{1/2}}\right) ^{2},
\end{equation*}%
then%
\begin{equation}
\left\vert S_{1}\right\vert \leq \Delta tC\left( \frac{\Delta
t^{1/2}\left\Vert \mathbf{u}\right\Vert _{L^{\infty }(L^{\theta }(D))}}{h}%
\right) ^{2}\left( \frac{h^{m+1}}{\Delta t^{1/2}}\right) ^{2}+\frac{\Delta t%
}{4}\left\Vert \theta _{h}^{n}\right\Vert ^{2}.  \label{lpse4}
\end{equation}%
To bound the terms $S_{2}$ and $S_{3}$ we use the same technique as for the
terms $T_{1}$ and $T_{2}$ above and obtain%
\begin{equation*}
\left\vert S_{2}\right\vert =\Delta t\left\vert S_{h}(\rho ^{n},\theta
_{h}^{n})\right\vert \leq (\delta /2)\Delta tS_{h}(\theta _{h}^{n},\theta
_{h}^{n})+(2\delta )^{-1}\Delta tS_{h}(\rho ^{n},\rho ^{n}),
\end{equation*}%
and%
\begin{equation*}
\left\vert S_{3}\right\vert =\Delta t\left\vert S_{h}(c^{n},\theta
_{h}^{n})\right\vert \leq (\delta /2)\Delta tS_{h}(\theta _{h}^{n},\theta
_{h}^{n})+(2\delta )^{-1}\Delta tS_{h}(c^{n},c^{n}).
\end{equation*}%
Setting $\delta =1/2$ and substituting these bounds in (\ref{lpse3}) yields%
\begin{equation*}
\begin{array}{l}
\left\Vert \theta _{h}^{n}\right\Vert ^{2}+\frac{1}{2}\left\Vert \theta
_{h}^{n}-\theta _{h}^{\ast n-1}\right\Vert ^{2}-\left\Vert \theta
_{h}^{n-1}\right\Vert ^{2}+\Delta tS_{h}(\theta _{h}^{n},\theta _{h}^{n}) \\
\\
\leq \Delta tC\displaystyle\left( \frac{\Delta t^{1/2}\left\Vert \mathbf{u}%
\right\Vert _{L^{\infty }(L^{\theta }(D))}}{h}\right) ^{2}\left( \frac{h^{m}%
}{\Delta t^{1/2}}\right) ^{2}+5\Delta t\left( S_{h}(\rho ^{n},\rho
^{n})\right)  \\
\\
+5\Delta t\left( S_{h}(c^{n},c^{n})\right) +\displaystyle\frac{\Delta t}{2}%
\left\Vert \theta _{h}^{n}\right\Vert ^{2}.%
\end{array}%
\end{equation*}%
Or equivalently, using (\ref{lpse1.01}) and (\ref{lpse1.02}),%
\begin{equation*}
\begin{array}{l}
\left\Vert \theta _{h}^{n}\right\Vert ^{2}+\frac{1}{2}\left\Vert \theta
_{h}^{n}-\theta _{h}^{\ast n-1}\right\Vert ^{2}-\left\Vert \theta
_{h}^{n-1}\right\Vert ^{2}+\Delta tS_{h}(\theta _{h}^{n},\theta _{h}^{n}) \\
\\
\leq \Delta tC\left( \tau _{\max }(h^{2m}+h^{2\left( s+1\right) })+\left(
\frac{\Delta t^{1/2}\left\Vert \mathbf{u}\right\Vert _{L^{\infty }(L^{\theta
}(D))}}{h}\right) ^{2}\left( \frac{h^{m}}{\Delta t^{1/2}}\right) ^{2}\right)
+\displaystyle\frac{\Delta t}{2}\left\Vert \theta _{h}^{n}\right\Vert ^{2}.%
\end{array}%
\end{equation*}%
Summing both sides of this inequality from $n=1$ up to $n=N$ and applying
Gronwall inequality we obtain that%
\begin{equation}
\begin{array}{c}
\left\vert \left\vert \left\vert \theta _{h}^{N}\right\vert \right\vert
\right\vert ^{2}+\frac{1}{2}\sum_{n=1}^{N}\left\Vert \theta _{h}^{n}-\theta
_{h}^{\ast n-1}\right\Vert ^{2}\leq \left\Vert \theta _{h}^{0}\right\Vert
^{2} \\
\\
+C\left( \tau _{\max }(h^{2m}+h^{2\left( s+1\right) })\displaystyle\left(
\frac{\Delta t^{1/2}\left\Vert \mathbf{u}\right\Vert _{L^{\infty }(L^{\theta
}(D))}}{h}\right) ^{2}\left( \frac{h^{m+1}}{\Delta t^{1/2}}\right)
^{2}\right) .%
\end{array}
\label{lpse5.0}
\end{equation}%
Hence,%
\begin{equation}
\left\vert \left\vert \left\vert \theta _{h}^{N}\right\vert \right\vert
\right\vert \leq \left\vert \left\vert \left\vert \theta _{h}^{0}\right\vert
\right\vert \right\vert +C\left( \tau _{\max }^{1/2}(h^{m}+h^{s+1})+\left(
\frac{\Delta t^{1/2}\left\Vert \mathbf{u}\right\Vert _{L^{\infty }(L^{\theta
}(D))}}{h}\right) \left( \frac{h^{m+1}}{\Delta t^{1/2}}\right) \right) .
\label{lpse5.1}
\end{equation}%
Now, noting that $\left\vert \left\vert \left\vert e^{N}\right\vert
\right\vert \right\vert \leq \left\vert \left\vert \left\vert \theta
_{h}^{N}\right\vert \right\vert \right\vert +\left\vert \left\vert
\left\vert \rho ^{N}\right\vert \right\vert \right\vert $ and%
\begin{equation*}
\left\vert \left\vert \left\vert \rho ^{N}\right\vert \right\vert
\right\vert \leq C(h^{m+1}+\tau _{\max }^{1/2}h^{m}),
\end{equation*}%
it follows from (\ref{lpse5.1}) that%
\begin{equation}
\left\vert \left\vert \left\vert e^{N}\right\vert \right\vert \right\vert
\leq \left\vert \left\vert \left\vert e^{0}\right\vert \right\vert
\right\vert +C\left( \tau _{\max }^{1/2}(h^{m}+h^{s+1})+h^{m+1}+\left( \frac{%
\Delta t^{1/2}\left\Vert \mathbf{u}\right\Vert _{L^{\infty }(L^{2}(D))}}{h}%
\right) \left( \frac{h^{m+1}}{\Delta t^{1/2}}\right) \right) .
\label{lpse5.2}
\end{equation}%
Thus, since both estimates (\ref{lpse5.2}) and (\ref{lpse2}) hold, then we
can write that there exists a constant $C$ such that%
\begin{equation*}
\left\vert \left\vert \left\vert e^{N}\right\vert \right\vert \right\vert
\leq \left\vert \left\vert \left\vert e^{0}\right\vert \right\vert
\right\vert +C\left( \tau _{\max }^{1/2}(h^{m}+h^{s+1})+h^{m+1}+\min \left(
1,\frac{\Delta t^{1/2}\left\Vert \mathbf{u}\right\Vert _{L^{\infty
}(L^{2}(D))}}{h}\right) \left( \frac{h^{m+1}}{\Delta t^{1/2}}\right) \right)
.
\end{equation*}
\end{proof}

\bigskip

\subsection{Numerical tests with the LPS-LG method}

We run, under the same premises, the rotating hump problem defined in
Section 3.3, although the mesh is now composed of right triangles with legs
of length $h=10^{-2}$. We show in the upper panel of Figure \ref{figure:lps2}
the $L^{2}$-norm of the error as a function of $\Delta t$ for both the
two-level LPS-LG method and the conventional LG method for a mesh size $h=%
\sqrt{2}\times 10^{-2}$ in both cases. In these experiments we have
calculated the integrals (\ref{gal_proj}) with a quadrature rule of 12
points, which is exact for polynomials of degree 6. The spaces $W_{h}$ and $%
G_{h}$ of the LPS-LG method are those shown in Figure \ref{figure:lps1},
whereas the finite element space for the conventional LG method consists of
piecewise quadratic polynomials defined on each one of the 3 triangles that
compose the macro-element. We observe that the LPS-LG method is more stable
than the conventional LG, because the latter goes unstable whereas the
LPS-LG method remains stable when $\tau _{M}=\tau =h$ for all $M$, but it
becomes unstable, with an instability region along the $\Delta t$-axis
smaller than the one of the LG method, when $\tau _{M}=\tau =0.1h$ for all $M
$. The lower panel of the figure shows that by increasing the order of the
quadrature rule the LPS-LG method with $\tau _{M}=\tau =0.1h$ becomes
stable. 

\begin{figure}[th!]
\begin{center}
\scalebox{0.30}{\includegraphics{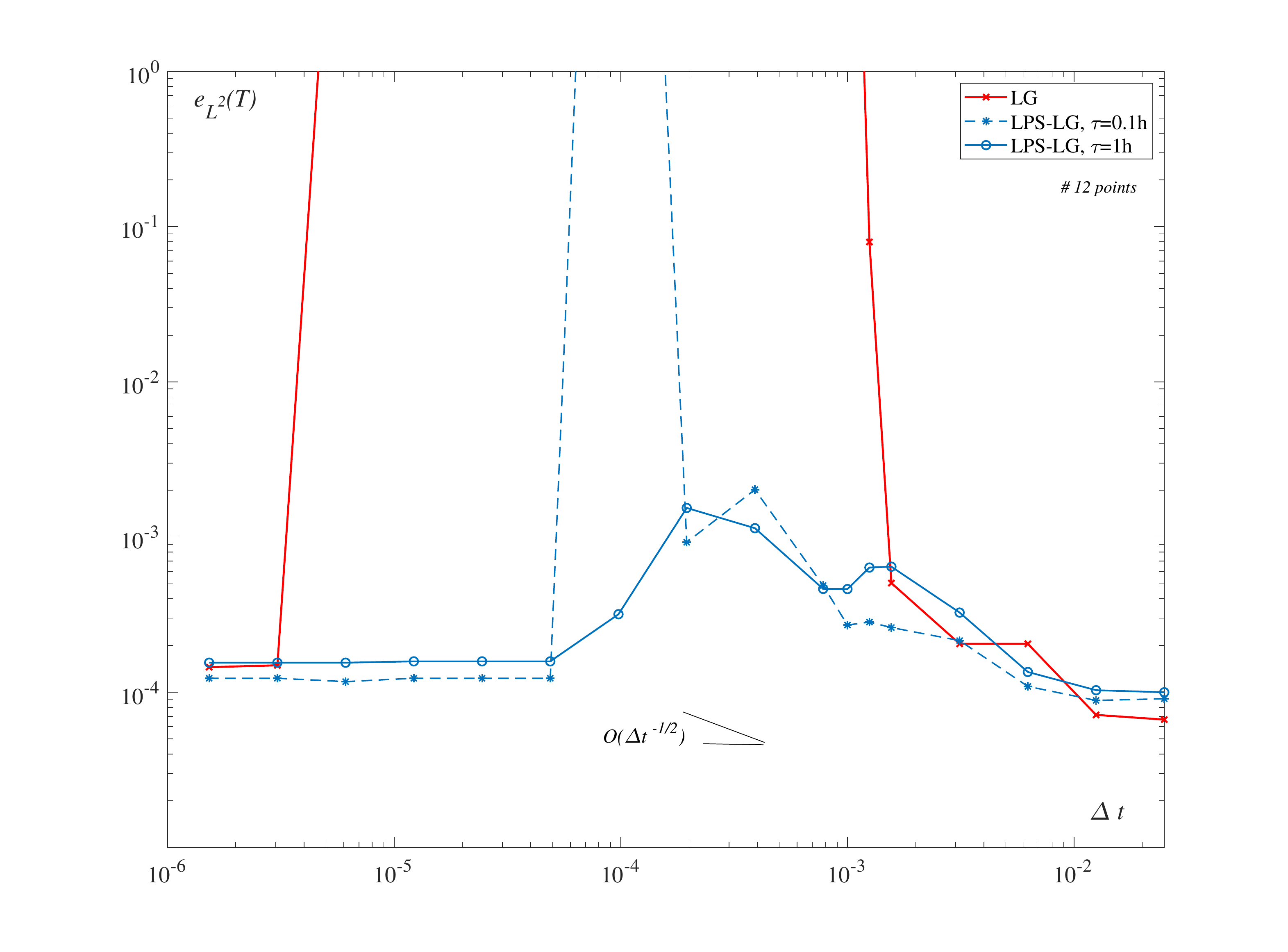}} \ \ \ %
\scalebox{0.30}{\includegraphics{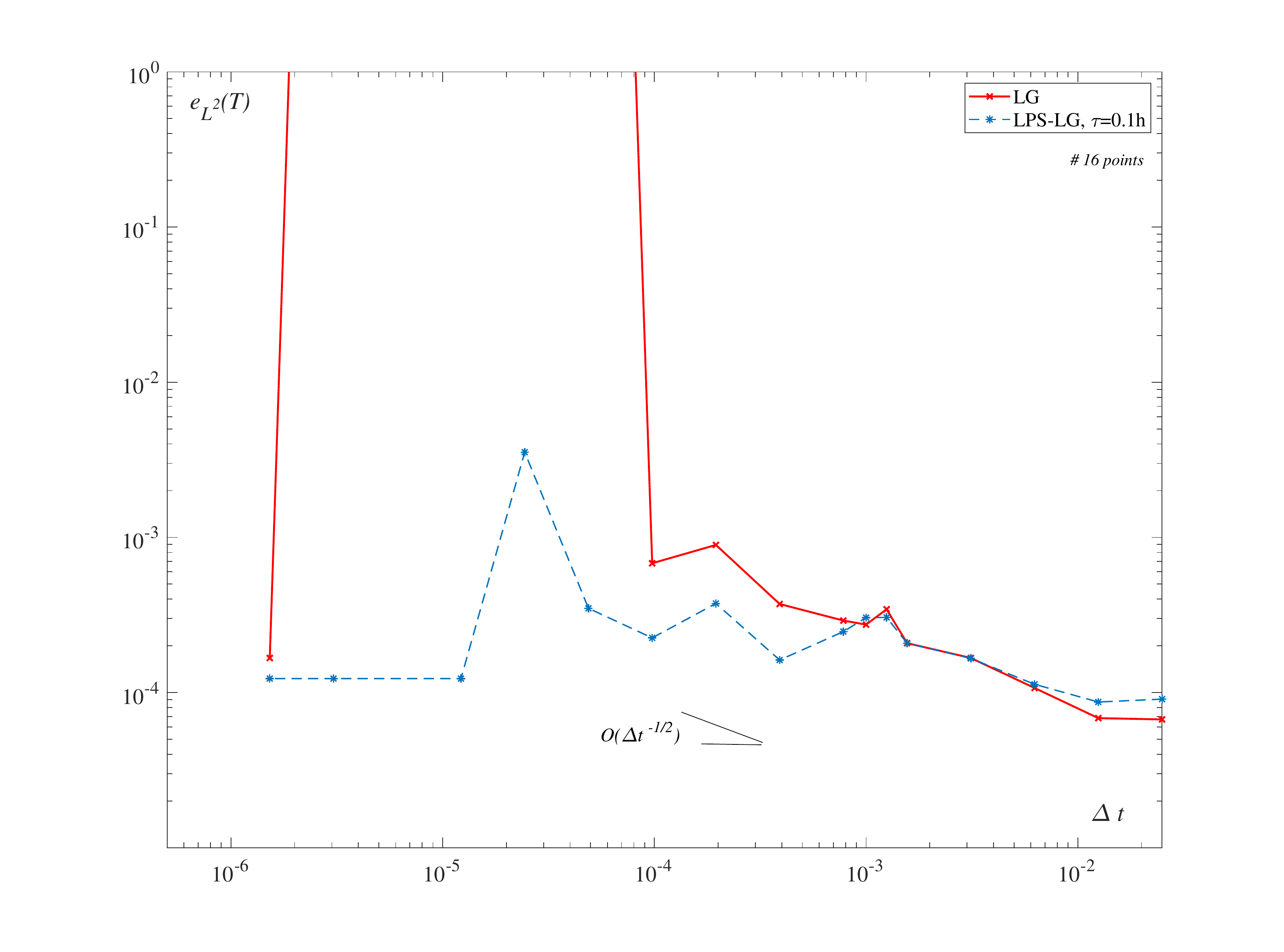}}
\end{center}
\caption{$L^2$-norm of the error in the rotating hump problem of the
two-level LPS-LG method and the conventional LG method .}
\label{figure:lps2}
\end{figure}


Figure \ref{figure:3} displays the $L^{2}$-norm of the error as a function
of $\Delta t$ for the one level LPS-LG method, the discrete spaces of which
are shown in the right panel of Figure \ref{figure:lps1}, and for the
conventional LG method with finite element space $W_{h}=P_{1}^{+}$. The mesh
size of this experiment is $h=\sqrt{2}\times 10^{-2}$. The solid lines
represent the error for the LPS-LG method with $\tau{\max} =\tau _{M}=0.1h$
and quadrature rules of $16$ and $25$ points, respectively. The dashed lines
correspond to the error of the conventional LG method.

We notice in these figures that for $\Delta t=O(h)$ and $h$ such that $\min
\left( 1,\frac{\Delta t^{1/2}\left\Vert \mathbf{u}\right\Vert _{L^{\infty
}(L^{2}(D))}}{h}\right) =1$, the solutions given by both the LPS-LG and LG
methods are very similar, regardless the quadrature rule. This fact agrees
with the results of Theorem \ref{teorema1} and Theorem \ref{teorlps},
because in this case the dominant term of the error in the conventional LG
method is $O({h^{m+1}}/{\Delta t^{1/2}})=O(h^{m+1/2})$, and in the LPS-LG
method the dominant term of the error is $O({h^{m+1}}/{\Delta t^{1/2}+}\tau
_{\max }^{1/2}\times (h^{m}+h^{s+1}))$, so letting $\tau _{\max
}^{1/2}=ch^{1/2}$ and $s=m-1$ one has that the error is also $O(h^{m+1/2})$.
However, for $\Delta t$ small enough so that $\min \left( 1,\frac{\Delta
t^{1/2}\left\Vert \mathbf{u}\right\Vert _{L^{\infty }(L^{2}(D))}}{h}\right) =%
\frac{\Delta t^{1/2}\left\Vert \mathbf{u}\right\Vert _{L^{\infty }(L^{2}(D))}%
}{h}$, the maximum error in the $L^{2}$-norm for the conventional LG method
is $O(h^{m}) $, whereas the maximum error of the LPS-LG method in the mesh
dependent norm$\ $is also $O(h^{m})$; however, since%
\begin{equation*}
\left\vert \left\vert \left\vert e^{N}\right\vert \right\vert \right\vert
^{2}=\left\Vert e\right\Vert ^{2}+\Delta t\sum_{n=1}^{N}\left( \sum_{M}\tau
_{M}(\kappa _{M}\nabla e^{n},\kappa _{M}\nabla e^{n})_{M}\right) ,
\end{equation*}%
then the $L^{2}$-error of the LPS-LG method is smaller than the $L^{2}$%
-error of the conventional LG method, and this is what we observe in Figure %
\ref{figure:3}.


\begin{figure}[th!]
\begin{center}
\scalebox{0.5}{\includegraphics{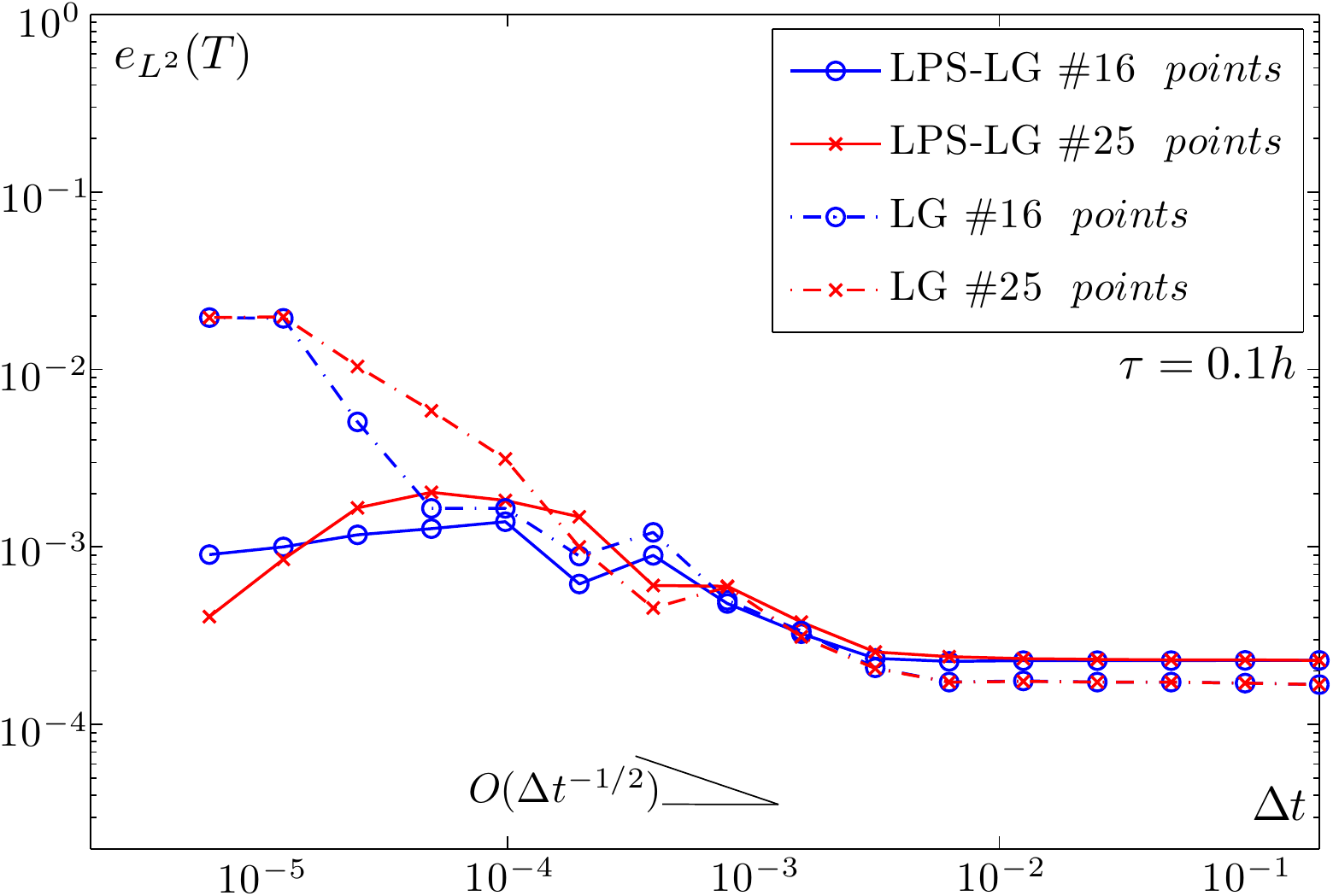}}
\end{center}
\caption{$L^2$-norm of the error in the rotating hump problem of the LPS-LG
method (solid lines) and the LG method (dashed lines).}
\label{figure:3}
\end{figure}


\section{The DC-LG method}

Numerical experiments show that when the analytical solution $c(x,t)$ is not
sufficiently smooth the LG methods presented in the previous sections are
not free from wiggles. Following the approach of \cite{john}, where the so
called shock-capturing characteristic streamline-diffusion method is
developed, but scaling the non linear dissipative term as in \cite{Naz}, we
formulate a LG method that is stable in the maximum norm with linear finite
elements, although numerical experiments show that the method may also be
stable with quadratic elements; this stabilization is achieved by adding a
non linear dissipative term on the left side of the formulation (\ref{ecua4}%
), thus obtaining the so called discontinuity-capturing LG method. In this
method, we calculate $c_{h}^{n}$ as solution of
\begin{equation}
\left( c_{h}^{n}-c_{h}^{\ast n-1},v_{h}\right) +\Delta t\sum_{K}\left(
\varepsilon _{K}(c_{h}^{n})\nabla c_{h}^{n},\nabla v_{h}\right) _{K}=0,
\label{n1}
\end{equation}%
where $\left( \varepsilon _{K}(c_{h}^{n})\nabla c_{h}^{n},\nabla
v_{h}\right) _{K}:=\int_{K}\varepsilon _{K}(c_{h}^{n})\nabla c_{h}^{n}\cdot
\nabla v_{h}dx$ and
\begin{equation}
\varepsilon _{K}(c_{h}^{n}):=C_{\varepsilon }h_{K}^{\alpha
}|R(c_{h}^{n})||_{K}\equiv C_{\varepsilon }h_{K}^{\alpha }\left. \frac{%
\left\vert c_{h}^{n}-c_{h}^{\ast n-1}\right\vert }{\Delta t}\right\vert _{K}.
\label{n2}
\end{equation}%
Here, $C_{\varepsilon }<1$ is a user-defined positive constant, the
coefficient $\alpha \in \lbrack 1,2)$ and $|R(c_{h}^{n})||_{K}$ denotes the
absolute value of the residual, restricted to the element $K$, generated by
the discretization of the material derivative along the characteristic
curves. The existence of a solution of (\ref{n1}) can be proven making use
of Corollary 1.1 of Chapter IV of \cite{GR} as in \cite{Naz}. Notice that
the amount of artificial diffusion is externally controlled by $%
C_{\varepsilon }$, $h$ and the parameter $\alpha $, the latter must be less
than 2 in order for the method to be stable in the maximum norm when the
finite element space is linear.

\subsection{Analysis of the DC-LG method}

First, we study the stability of (\ref{n1}) in both the $L^{2}$ norm and the
$L^{\infty }$ norm.


\begin{lemma}
\label{stabsc} For all $N\geq 1$, it holds
\begin{equation}
\left\Vert c_{h}^{N}\right\Vert ^{2}+\sum_{n=1}^{N}\left\Vert
c_{h}^{n}-c_{h}^{\ast n-1}\right\Vert ^{2}+2\Delta
t\sum_{n=1}^{N}\sum_{K}\left\Vert \varepsilon _{K}(c_{h}^{n})^{1/2}\nabla
c_{h}^{n}\right\Vert _{K}^{2}\leq \left\Vert c_{h}^{0}\right\Vert ^{2}.
\label{n3}
\end{equation}
\end{lemma}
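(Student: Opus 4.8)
The plan is to run the standard Lagrange--Galerkin energy argument, exactly as in the proofs of Lemma~\ref{stability} and Lemma~\ref{stablps}, the only new ingredient being the nonnegativity of the artificial diffusion coefficient $\varepsilon _{K}(c_{h}^{n})$. First I would take $v_{h}=c_{h}^{n}$ in the defining equation (\ref{n1}), which is admissible since $c_{h}^{n}\in V_{h}$, obtaining
\begin{equation*}
\left( c_{h}^{n}-c_{h}^{\ast n-1},c_{h}^{n}\right) +\Delta t\sum_{K}\left( \varepsilon _{K}(c_{h}^{n})\nabla c_{h}^{n},\nabla c_{h}^{n}\right) _{K}=0.
\end{equation*}
Applying the elementary identity $2(a-b,a)=\left\Vert a\right\Vert ^{2}+\left\Vert a-b\right\Vert ^{2}-\left\Vert b\right\Vert ^{2}$ to the first term, and noting that, since $\varepsilon _{K}(c_{h}^{n})$ equals $C_{\varepsilon }h_{K}^{\alpha }$ times $\left\vert c_{h}^{n}-c_{h}^{\ast n-1}\right\vert /\Delta t$ restricted to $K$ and hence is nonnegative, one has $\left( \varepsilon _{K}(c_{h}^{n})\nabla c_{h}^{n},\nabla c_{h}^{n}\right) _{K}=\left\Vert \varepsilon _{K}(c_{h}^{n})^{1/2}\nabla c_{h}^{n}\right\Vert _{K}^{2}$, the equation becomes
\begin{equation*}
\left\Vert c_{h}^{n}\right\Vert ^{2}+\left\Vert c_{h}^{n}-c_{h}^{\ast n-1}\right\Vert ^{2}-\left\Vert c_{h}^{\ast n-1}\right\Vert ^{2}+2\Delta t\sum_{K}\left\Vert \varepsilon _{K}(c_{h}^{n})^{1/2}\nabla c_{h}^{n}\right\Vert _{K}^{2}=0.
\end{equation*}

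Next I would invoke the norm-preservation identity (\ref{eq6.0}), which applies here because $c_{h}^{\ast n-1}(x)=c_{h}^{n-1}(X^{n,n-1}(x))\in L^{2}(D)$ and $J(x,t_{n};t_{n-1})=1$ a.e.; this gives $\left\Vert c_{h}^{\ast n-1}\right\Vert =\left\Vert c_{h}^{n-1}\right\Vert $. Substituting this and summing the resulting identity from $n=1$ up to $n=N$ telescopes the $\left\Vert c_{h}^{n}\right\Vert ^{2}$ terms and leaves
\begin{equation*}
\left\Vert c_{h}^{N}\right\Vert ^{2}+\sum_{n=1}^{N}\left\Vert c_{h}^{n}-c_{h}^{\ast n-1}\right\Vert ^{2}+2\Delta t\sum_{n=1}^{N}\sum_{K}\left\Vert \varepsilon _{K}(c_{h}^{n})^{1/2}\nabla c_{h}^{n}\right\Vert _{K}^{2}=\left\Vert c_{h}^{0}\right\Vert ^{2},
\end{equation*}
which is in fact an equality; retaining all three (nonnegative) left-hand terms yields the claimed bound (\ref{n3}).

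There is essentially no hard step here: the argument is a verbatim adaptation of the earlier stability lemmas, and the only point meriting a line of justification is that $\varepsilon _{K}(c_{h}^{n})\geq 0$, so that the stabilization term is a genuine dissipation; this relies solely on $C_{\varepsilon }>0$, $h_{K}>0$, and the absolute value in (\ref{n2}). (The companion $L^{\infty }$-stability statement alluded to in the text is of a different nature and would instead require a discrete maximum-principle analysis of the algebraic system, but it is not needed for (\ref{n3}).)
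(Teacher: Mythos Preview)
Your proof is correct and follows essentially the same approach as the paper: test (\ref{n1}) with $v_{h}=c_{h}^{n}$, apply the polarization identity and the norm-preservation $\left\Vert c_{h}^{\ast n-1}\right\Vert =\left\Vert c_{h}^{n-1}\right\Vert$, then telescope. Your version is slightly more explicit in justifying $\varepsilon_{K}(c_{h}^{n})\geq 0$ and in noting that the final relation is actually an equality, but the argument is otherwise identical to the paper's.
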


\begin{proof}
Letting $v_{h}=c_{h}^{n}$ in (\ref{n1}) and taking into account that $%
\left\Vert c_{h}^{\ast n-1}\right\Vert =\left\Vert c_{h}^{n-1}\right\Vert $,
it follows that%
\begin{equation*}
\left\Vert c_{h}^{n}\right\Vert ^{2}+\left\Vert c_{h}^{n}-c_{h}^{\ast
n-1}\right\Vert ^{2}-\left\Vert c_{h}^{n-1}\right\Vert ^{2}+2\Delta
t\sum_{K}\left\Vert \varepsilon _{K}(c_{h}^{n})^{1/2}\nabla
c_{h}^{n}\right\Vert _{K}^{2}.
\end{equation*}%
Hence, it follows that%
\begin{equation*}
\left\Vert c_{h}^{N}\right\Vert ^{2}+\sum_{n=1}^{N}\left\Vert
c_{h}^{n}-c_{h}^{\ast n-1}\right\Vert ^{2}+2\Delta
t\sum_{n=1}^{N}\sum_{K}\left\Vert \varepsilon _{K}(c_{h}^{n})^{1/2}\nabla
c_{h}^{n}\right\Vert _{K}^{2}\leq \left\Vert c_{h}^{0}\right\Vert ^{2}.
\end{equation*}
\end{proof}

\begin{lemma}
\label{stabmaxn} There is a constant $C$ independent of $h$, $\Delta t$, and
$n$, but depending on the constant $C_{\varepsilon }$, such that for all $n,$

\begin{equation}
\left\Vert c_{h}^{n}\right\Vert _{L^{\infty }(D)}\leq (1+Ch^{\frac{1}{2}%
(2-\alpha )}\log (\frac{1}{h}))\left\Vert c_{h}^{0}\right\Vert _{L^{\infty
}(D)}.  \label{n4}
\end{equation}
\end{lemma}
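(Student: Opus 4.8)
The plan is to estimate $\|c_h^n\|_{L^\infty(D)}$ directly at the mesh nodes, using that with linear finite elements $c_h^n$ is continuous and piecewise linear, so $\|c_h^n\|_{L^\infty(D)}=\max_i|c_h^n(x_i)|$. Fix $n$, let $x_0$ be a node realizing this maximum, and introduce the discrete Dirac mass $\delta_h^{x_0}\in V_h$ defined by $(\delta_h^{x_0},v_h)=v_h(x_0)$ for all $v_h\in V_h$ (equivalently, one may work with the discrete Green's function of the lumped-mass operator). Taking $v_h=\delta_h^{x_0}$ in (\ref{n1}) gives $c_h^n(x_0)=(c_h^{\ast n-1},\delta_h^{x_0})-\Delta t\sum_K(\varepsilon_K(c_h^n)\nabla c_h^n,\nabla\delta_h^{x_0})_K$, so that $|c_h^n(x_0)|$ splits into a \emph{transport part} and an \emph{artificial-diffusion part}. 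For the transport part one uses that the characteristic map $x\mapsto X^{n,n-1}(x)$ is a measure-preserving bijection, hence $\|c_h^{\ast n-1}\|_{L^\infty(D)}=\|c_h^{n-1}\|_{L^\infty(D)}$, combined with the nonnegativity and partition-of-unity properties of the $P_1$ nodal basis — best exploited after replacing the consistent mass pairing by the lumped one, the lumping discrepancy on $P_1\times P_1$ being $O(h^2)$ and hence absorbable — to obtain $|(c_h^{\ast n-1},\delta_h^{x_0})|\le\|c_h^{n-1}\|_{L^\infty(D)}$ up to lower-order terms.

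The core of the argument is the artificial-diffusion part. By the elementwise Cauchy--Schwarz inequality, $\Delta t\,\bigl|\sum_K(\varepsilon_K\nabla c_h^n,\nabla\delta_h^{x_0})_K\bigr|\le\Delta t\,\bigl(\sum_K\varepsilon_K\|\nabla c_h^n\|_K^2\bigr)^{1/2}\bigl(\sum_K\varepsilon_K\|\nabla\delta_h^{x_0}\|_K^2\bigr)^{1/2}$. The first factor is precisely the dissipation functional that Lemma \ref{stabsc} controls \emph{cumulatively in time}, namely $\Delta t\sum_{n}\sum_K\|\varepsilon_K(c_h^n)^{1/2}\nabla c_h^n\|_K^2\le\tfrac12\|c_h^0\|^2$. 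The second factor is bounded using $\varepsilon_K(c_h^n)\le 2C_\varepsilon h_K^{\alpha}\|c_h^n\|_{L^\infty(D)}/\Delta t$ (obtained from (\ref{n2}) after bootstrapping stability, with $\|c_h^{\ast n-1}\|_{L^\infty}=\|c_h^{n-1}\|_{L^\infty}$), the inverse inequality (\ref{eq4b}) applied to $\nabla\delta_h^{x_0}$, and the standard norm bounds for the discrete Dirac mass (resp. discrete Green's function) on quasi-uniform meshes — one of which carries the logarithmic factor characteristic of $L^\infty$ estimates in two space dimensions, and this is where $\log(1/h)$ enters. Summing the resulting per-step bound over $n=1,\dots,N$, using Cauchy--Schwarz in time together with the cumulative dissipation estimate of Lemma \ref{stabsc}, collapses the entire diffusion contribution into a single term of the form $C\,h^{\frac12(2-\alpha)}\log(1/h)\,\|c_h^0\|_{L^\infty(D)}$, with $C$ depending on $C_\varepsilon$ (and $C_\varepsilon<1$ used, if needed, to absorb a leftover $\|c_h^n\|_{L^\infty}$ on the right). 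The restriction $\alpha\in[1,2)$ is exactly what makes the exponent $\tfrac12(2-\alpha)$ positive, so the correction is $o(1)$ as $h\to 0$.

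Combining the two parts yields $\|c_h^n\|_{L^\infty(D)}\le\|c_h^{n-1}\|_{L^\infty(D)}+a_n$ with $\sum_{n}a_n\le C\,h^{\frac12(2-\alpha)}\log(1/h)\,\|c_h^0\|_{L^\infty(D)}$, and telescoping over the time levels gives (\ref{n4}). The main obstacle is that, in contrast with lumped mass on a weakly acute triangulation, the consistent mass matrix is not of nonnegative type, so no direct discrete maximum principle is available; one must instead measure, in the $L^\infty$ norm, the node-wise correction produced at each step through the discrete Dirac mass / discrete Green's function, and — crucially — avoid the naive compounding $(1+\text{const})^{N}$ by channelling the whole artificial-diffusion contribution through the \emph{single} cumulative $L^2$-dissipation estimate of Lemma \ref{stabsc} rather than bounding it step by step. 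A secondary technicality is the careful treatment of the mass-lumping discrepancy and of the boundary nodes, neither of which affects the stated order.
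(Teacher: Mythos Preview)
Your argument has a structural gap: you treat the artificial-diffusion term as a \emph{perturbation} whose size is to be bounded (via Cauchy--Schwarz and the cumulative $L^2$-dissipation of Lemma~\ref{stabsc}), but this is backwards. The conventional LG method without artificial diffusion is \emph{not} $L^\infty$-stable --- the $L^2$-projection $P_h$ has $L^\infty\!\to\!L^\infty$ operator norm strictly greater than~$1$, and iterating over $N=T/\Delta t$ steps gives uncontrolled growth. Your mass-lumping device does give constant~$1$ for the lumped projection, but the scheme uses the consistent mass matrix, and the lumping discrepancy you dismiss as ``$O(h^2)$ and hence absorbable'' accumulates to $O(h^2/\Delta t)$ over $N$ steps and cannot be routed through Lemma~\ref{stabsc}. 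Independently, your diffusion bound does not scale correctly: with $\|\delta_h^{x_0}\|^2\sim h^{-d}$ and the inverse inequality, $\sum_K\varepsilon_K\|\nabla\delta_h^{x_0}\|_K^2\lesssim (C_\varepsilon h^{\alpha}M/\Delta t)\,h^{-2-d}$, so after Cauchy--Schwarz in time your $\sum_n a_n$ behaves like $h^{(\alpha-2-d)/2}\Delta t^{-1/2}$, which in $d=2$ is $h^{(\alpha-4)/2}\Delta t^{-1/2}$ and blows up; it is nowhere near $h^{(2-\alpha)/2}\log(1/h)$. The $\log(1/h)$ you invoke does not come from norms of the $L^2$-Riesz discrete Dirac (those are pure powers of $h$); that logarithm belongs to Green's functions of discrete \emph{elliptic} operators, which play no role here.

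The paper's route is entirely different and uses the artificial diffusion with its \emph{sign}. Following Nazarov, Johnson and Szepessy, one tests (\ref{n1}) with $v_h=\Pi_h(c_h^n)^{p-1}$ for $p=2,4,8,\ldots$, uses $\|c_h^{\ast n-1}\|_{L^p}=\|c_h^{n-1}\|_{L^p}$, and invokes Szepessy's identity
\[
\int_D \nabla w_h\cdot\nabla \Pi_h(w_h)^{p-1}\,dx \;=\; \frac{c}{p^2}\sum_K\int_K |\nabla w_h|^{2}(w_h)^{p-2}\,dx,
\]
valid only for $P_1$ (this is the true source of the linear-element restriction). This makes the diffusion contribution nonnegative and comparable to the residual term arising from replacing $(c_h^n)^{p-1}$ by its interpolant; the residual-based coefficient $\varepsilon_K=C_\varepsilon h_K^{\alpha}|c_h^n-c_h^{\ast n-1}|/\Delta t$ is exactly what is needed to absorb that term. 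One obtains an $L^p$-recursion whose growth factor, optimized at $p\sim\log(1/h)$, yields the bound $(1+Ch^{(2-\alpha)/2}\log(1/h))\|c_h^0\|_{L^\infty}$. The point is that the nonlinear diffusion is the \emph{mechanism} of $L^\infty$-stability, not a remainder to be estimated.
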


Noting that for $p\geq 1$, $\left\Vert c_{h}^{\ast n-1}\right\Vert
_{L^{p}(D)}=\left\Vert c_{h}^{n-1}\right\Vert _{L^{p}(D)}$, we can prove
this lemma by using the the same arguments as those employed to prove Lemma
6 in \cite{Naz}. See also the proof presented in \cite{john} of the
stability in the maximum norm for the shock-capturing characteristic
streamline-diffusion method. It is worth remarking that maximum norm
stability has only been proven for linear finite elements, because this
proof makes use of a result of \cite{Szp}, which says that there is a
constant $c$ independent of $p=2a,\ a=1,2\ldots $, such that for all $%
w_{h}\in W_{h}$%
\begin{equation*}
\int_ {\mathbb{R}^{d}}\nabla w_{h}\cdot \nabla \Pi _{h}(w_{h})^{p-1}dx=\frac{%
c}{p^{2}}\sum_{K}\int_{K}\left\vert \nabla w_{h}\right\vert \left(
w_{h}\right) ^{p-2}dx.
\end{equation*}%
And this result has only been proven for linear finite elements. However,
via numerical examples, we have observed that the maximum norm stability
also holds in cases where the solution exhibits strong discontinuities for
quadratic elements.

For the error analysis we have the following result.

\begin{theorem}
\label{teorema3}Let $c\in L^{\infty }(H^{m+1}(D)\cap H_{0}^{1}(D))\cap
L^{\infty }(W^{1,\infty }(D))$. Then, there exists a constant $C$
independent of $\Delta t,\ h$ and $n$, but depending on $\left\vert
D\right\vert $, $\left\vert c\right\vert _{L^{\infty }(H^{m+1}(D))}$ and $%
\left\Vert \nabla c\right\Vert _{L^{\infty }(L^{\infty }(D))}$, such that
\begin{equation}
\left\Vert c-c_{h}\right\Vert _{l^{\infty }\left( L^{2}(D)\right) }\leq
\frac{C(h^{m+1}+C_{\varepsilon }h^{\alpha })}{\Delta t^{1/2}}.  \label{teor3}
\end{equation}
\end{theorem}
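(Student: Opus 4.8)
The plan is to mimic the structure of the proof of Theorem \ref{teorema1}, carrying the extra nonlinear diffusion term $\Delta t\sum_K(\varepsilon_K(c_h^n)\nabla c_h^n,\nabla v_h)_K$ through the computation as a controllable perturbation. First I would write the error as $e^n=c^n-c_h^n=\rho^n+\theta_h^n$ with $\rho^n=c^n-P_hc^n$ and $\theta_h^n=P_hc^n-c_h^n\in V_h$, and recall $\|\rho^n\|_{H^l}\le c_2h^{m+1-l}|c^n|_{H^{m+1}}$ from \eqref{eq9.1}. Using $c^n=c^{\ast n-1}$ from \eqref{eq4} and subtracting \eqref{n1}, the error equation becomes
\begin{equation*}
(e^n-e^{\ast n-1},v_h)+\Delta t\sum_K(\varepsilon_K(c_h^n)\nabla c_h^n,\nabla v_h)_K=0\qquad\forall v_h\in V_h.
\end{equation*}
I would then write $c_h^n=c^n-e^n$ inside the diffusion term to split it as $\Delta t\sum_K(\varepsilon_K(c_h^n)\nabla c^n,\nabla v_h)_K-\Delta t\sum_K(\varepsilon_K(c_h^n)\nabla e^n,\nabla v_h)_K$, moving the second (dissipative, good-sign) piece to the left and leaving a consistency term involving $\nabla c^n$ on the right.

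Next I would test with $v_h=\theta_h^n=e^n-\rho^n$. As in Theorem \ref{teorema1}, the identity $2(e^n-e^{\ast n-1},e^n)=\|e^n\|^2+\|e^n-e^{\ast n-1}\|^2-\|e^{n-1}\|^2$ together with $\|e^{\ast n-1}\|=\|e^{n-1}\|$ from \eqref{eq6.0} gives the telescoping skeleton, and $P_h\rho^n=0$ kills the inner product $(e^n-e^{\ast n-1},\theta_h^n)$'s $\rho$–free obstruction exactly as before, leaving $T_1=(e^n-e^{\ast n-1},\rho^n)$ bounded by $\tfrac14\|e^n-e^{\ast n-1}\|^2+\|\rho^n\|^2$ via Cauchy–Schwarz. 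The new terms are: (i) the dissipative term $\Delta t\sum_K(\varepsilon_K(c_h^n)\nabla e^n,\nabla e^n)_K-\Delta t\sum_K(\varepsilon_K(c_h^n)\nabla e^n,\nabla\rho^n)_K$, where the first stays on the left with its sign and the cross term is absorbed by a Cauchy inequality at the price of $\tfrac12\Delta t\sum_K(\varepsilon_K(c_h^n)\nabla\rho^n,\nabla\rho^n)_K$; and (ii) the consistency term $\Delta t\sum_K(\varepsilon_K(c_h^n)\nabla c^n,\nabla\theta_h^n)_K$, which I would bound using $\theta_h^n=e^n-\rho^n$ again and a Cauchy inequality against the dissipative term plus $\Delta t\sum_K(\varepsilon_K(c_h^n)\nabla c^n,\nabla c^n)_K$. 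The key size estimate is the uniform bound $\varepsilon_K(c_h^n)=C_\varepsilon h_K^\alpha\,\||c_h^n-c_h^{\ast n-1}|/\Delta t\|_K$; here I would use $\|c_h^n\|_{L^\infty}$ controlled by Lemma \ref{stabmaxn} (or, for general $m$, the $L^2$ stability of Lemma \ref{stabsc}) to get $\varepsilon_K(c_h^n)\le C_\varepsilon h_K^{\alpha}\cdot C/\Delta t\cdot(\text{bounded})$, so that $\Delta t\,\varepsilon_K(c_h^n)\le C C_\varepsilon h_K^\alpha$, and then $\Delta t\sum_K(\varepsilon_K(c_h^n)\nabla\rho^n,\nabla\rho^n)_K\le C C_\varepsilon h^\alpha\|\nabla\rho^n\|^2\le C C_\varepsilon h^{\alpha+2m}|c^n|_{H^{m+1}}^2$ and $\Delta t\sum_K(\varepsilon_K(c_h^n)\nabla c^n,\nabla c^n)_K\le C C_\varepsilon h^\alpha\|\nabla c^n\|^2\le C C_\varepsilon h^\alpha\|\nabla c\|_{L^\infty(L^\infty)}^2|D|$.

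Assembling, I would obtain
\begin{equation*}
\|e^n\|^2+\tfrac12\|e^n-e^{\ast n-1}\|^2-\|e^{n-1}\|^2+\Delta t\sum_K(\varepsilon_K(c_h^n)\nabla e^n,\nabla e^n)_K\le C\|\rho^n\|^2+CC_\varepsilon h^\alpha(h^{2m}+1)|c|_{\cdots}^2,
\end{equation*}
sum from $n=1$ to $N$, drop the nonnegative dissipation sum, and use $\|\rho^n\|^2\le Ch^{2(m+1)}|c^n|_{H^{m+1}}^2$ together with $\sum_{n=1}^N=T/\Delta t$ terms. This yields $\|e^N\|^2\le\|e^0\|^2+C(h^{2(m+1)}+C_\varepsilon h^\alpha)/\Delta t$, hence, taking square roots and choosing $h_0$ small enough that $C_\varepsilon h^\alpha\le 1$ absorbs the $h^{2m}$ factor, the claimed bound $\|c-c_h\|_{l^\infty(L^2)}\le C(h^{m+1}+C_\varepsilon h^\alpha)/\Delta t^{1/2}$ (the initial-error term is absorbed into the constant or may be kept separately as in \eqref{eq8}). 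The main obstacle I anticipate is handling the consistency term $\Delta t\sum_K(\varepsilon_K(c_h^n)\nabla c^n,\nabla\theta_h^n)_K$: the clean absorption above requires the crude uniform bound $\Delta t\,\varepsilon_K(c_h^n)\le CC_\varepsilon h_K^\alpha$, which in turn rests on an $L^\infty$ or $L^2$ a priori bound for $c_h^n$ that is only fully justified for linear elements via Lemma \ref{stabmaxn}; for higher-order elements one must instead lean on the $L^2$ stability from Lemma \ref{stabsc}, accept a possibly suboptimal power of $h$, and this is precisely where the hypothesis $c\in L^\infty(W^{1,\infty}(D))$ and the restriction $\alpha\in[1,2)$ enter to keep the artificial-viscosity contribution subcritical.
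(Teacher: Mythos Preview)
Your overall architecture is right, but there is a genuine gap in the way you bound the artificial-viscosity terms, and it costs you exactly the rate claimed in the theorem.

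You propose to control $\varepsilon_K(c_h^n)$ by the crude pointwise estimate $\Delta t\,\varepsilon_K(c_h^n)=C_\varepsilon h_K^\alpha|c_h^n-c_h^{\ast n-1}|\le CC_\varepsilon h_K^\alpha$, using the $L^\infty$ stability of Lemma~\ref{stabmaxn}. This leads to a per-step contribution $\Delta t\sum_K(\varepsilon_K(c_h^n)\nabla c^n,\nabla c^n)_K\le CC_\varepsilon h^\alpha$ in the \emph{squared} inequality. After summing over $N=T/\Delta t$ steps and taking the square root you get $\|e^N\|\le C(h^{m+1}+C_\varepsilon^{1/2}h^{\alpha/2})/\Delta t^{1/2}$, not the claimed $C(h^{m+1}+C_\varepsilon h^\alpha)/\Delta t^{1/2}$. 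The step where you say ``taking square roots\ldots the claimed bound'' is simply incorrect: $\sqrt{C_\varepsilon h^\alpha}\ne C_\varepsilon h^\alpha$. Moreover, your route leans on Lemma~\ref{stabmaxn}, which is only available for $m=1$, while the theorem is stated for general~$m$; you yourself flag this as an obstacle.

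The key idea you are missing---and which removes both problems at once---is the identity $c_h^n-c_h^{\ast n-1}=-(e^n-e^{\ast n-1})$, which holds because $c^n=c^{\ast n-1}$. Using it, the $\Delta t$ in $\Delta t\,\varepsilon_K(c_h^n)$ cancels the $\Delta t$ in the residual and one obtains
\[
\Delta t\sum_K(\varepsilon_K(c_h^n)\nabla c^n,\nabla c^n)_K
= C_\varepsilon\sum_K h_K^\alpha\!\int_K|e^n-e^{\ast n-1}|\,|\nabla c^n|^2
\le C_\varepsilon h^\alpha\|\nabla c^n\|_{L^\infty}^2\,\|e^n-e^{\ast n-1}\|_{L^1(D)}.
\]
Now $\|e^n-e^{\ast n-1}\|_{L^1(D)}\le C_D\|e^n-e^{\ast n-1}\|_{L^2(D)}$ and a Young inequality lets you \emph{absorb} this into the jump term $\|e^n-e^{\ast n-1}\|^2$ already sitting with a good sign on the left, at the price of $CC_\varepsilon^2 h^{2\alpha}$ per step. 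Summation and square root then give the correct power $C_\varepsilon h^\alpha$, and no a~priori $L^\infty$ bound on $c_h^n$ is needed, so the argument works for every~$m$. The same trick handles the $\nabla\rho^n$ term; here the paper takes $\rho^n=c^n-\Pi_h c^n$ (Lagrange interpolant) rather than $c^n-P_h c^n$, precisely so that $\|\nabla\rho^n\|_{L^\infty}\le c_3\|\nabla c^n\|_{L^\infty}$ is available from~\eqref{eq4e}.
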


\begin{proof}
The error equation is%
\begin{equation}
\left( e^{n}-e^{\ast n-1},v_{h}\right) -\Delta t\sum_{K}(\varepsilon
_{K}(c_{h}^{n})\nabla c_{h}^{n},\nabla v_{h})_{K}=0  \label{dce1}
\end{equation}%
Noting that $c_{h}^{n}=c^{n}-e^{n}$ we have that%
\begin{equation*}
\begin{array}{r}
\Delta t\sum_{K}(\varepsilon _{K}(c_{h}^{n})\nabla c_{h}^{n},\nabla
v_{h})_{K}=\Delta t\displaystyle\sum_{K}(\varepsilon _{K}(c_{h}^{n})\nabla
c^{n},\nabla v_{h})_{K} \\
\\
-\Delta t\displaystyle\sum_{K}(\varepsilon _{K}(c_{h}^{n})\nabla
e^{n},\nabla v_{h})_{K},%
\end{array}%
\end{equation*}%
so we can write the error equation as%
\begin{equation}
\left( e^{n}-e^{\ast n-1},v_{h}\right) +\Delta t\sum_{K}(\varepsilon
_{K}(c_{h}^{n})\nabla e^{n},\nabla v_{h})_{K}=\Delta t\sum_{K}(\varepsilon
_{K}(c_{h}^{n})\nabla c^{n},\nabla v_{h})_{K}.  \label{dce2}
\end{equation}%
Now, setting in this equation $v_{h}=\theta _{h}^{n}=e^{n}-\rho ^{n}$, where
$\rho ^{n}=c^{n}-$ $\Pi _{h}c^{n}$ and $\theta _{h}^{n}=\Pi
_{h}c^{n}-c_{h}^{n}$, we get%
\begin{equation}
\begin{array}{r}
\left( e^{n}-e^{\ast n-1},e^{n}\right) +\Delta t\sum_{K}(\varepsilon
_{K}(c_{h}^{n})\nabla e^{n},\nabla e^{n})_{K}=\left( e^{n}-e^{\ast n-1},\rho
^{n}\right) \\
\\
+\Delta t\sum_{K}(\varepsilon _{K}(c_{h}^{n})\nabla e^{n},\nabla \rho
^{n})_{K} \\
\\
+\Delta t\sum_{K}(\varepsilon _{K}(c_{h}^{n})\nabla e^{n},\nabla c^{n})_{K}
\\
\\
-\Delta t\sum_{K}(\varepsilon _{K}(c_{h}^{n})\nabla c^{n},\nabla \rho
^{n})_{K}\equiv \sum_{i=1}^{4}R_{i}.%
\end{array}
\label{dce3}
\end{equation}%
We estimate the terms $R_{i}$ on the right hand side. Thus, regarding $R_{1}$
we apply the Cauchy-Schwarz inequality to obtain that%
\begin{equation*}
\left\vert R_{1}\right\vert =\left\vert \left( e^{n}-e^{\ast n-1},\rho
^{n}\right) \right\vert \leq \frac{1}{8}\left\Vert e^{n}-e^{\ast
n-1}\right\Vert ^{2}+2\left\Vert \rho ^{n}\right\Vert ^{2}.
\end{equation*}%
As for the term $R_{2}$, we use the same inequality to get%
\begin{equation*}
\left\vert R_{2}\right\vert \leq (\delta /2)\Delta t\sum_{K}(\varepsilon
_{K}(c_{h}^{n})\nabla e^{n},\nabla e^{n})_{K}+(2\delta )^{-1}\Delta
t\sum_{K}(\varepsilon _{K}(c_{h}^{n})\nabla \rho ^{n},\nabla \rho
^{n})_{K},\ 0<\delta <1.
\end{equation*}%
Similarly%
\begin{equation*}
\left\vert R_{3}\right\vert \leq (\delta /2)\Delta t\sum_{K}(\varepsilon
_{K}(c_{h}^{n})\nabla e^{n},\nabla e^{n})_{K}+(2\delta )^{-1}\Delta
t\sum_{K}(\varepsilon _{K}(c_{h}^{n})\nabla c^{n},\nabla c^{n})_{K},
\end{equation*}%
and%
\begin{equation*}
\left\vert R_{4}\right\vert \leq \frac{1}{2}\Delta t\sum_{K}(\varepsilon
_{K}(c_{h}^{n})\nabla c^{n},\nabla c^{n})_{K}+\frac{1}{2}\Delta
t\sum_{K}(\varepsilon _{K}(c_{h}^{n})\nabla \rho ^{n},\nabla \rho ^{n})_{K}.
\end{equation*}%
Substituting this estimates in (\ref{dce3}) with $\delta =1/2$ yields%
\begin{equation}
\begin{array}{r}
\left\Vert e^{n}\right\Vert ^{2}+\frac{3}{4}\left\Vert e^{n}-e^{\ast
n-1}\right\Vert ^{2}-\left\Vert e^{n-1}\right\Vert ^{2}+\Delta
t\sum_{K}(\varepsilon _{K}(c_{h}^{n})\nabla e^{n},\nabla e^{n})_{K} \\
\\
\leq 4\left\Vert \rho ^{n}\right\Vert ^{2}+3\Delta t\sum_{K}(\varepsilon
_{K}(c_{h}^{n})\nabla c^{n},\nabla c^{n})_{K} \\
\\
+3\Delta t\sum_{K}(\varepsilon _{K}(c_{h}^{n})\nabla \rho ^{n},\nabla \rho
^{n})_{K}.%
\end{array}
\label{dce4}
\end{equation}%
Next, we have to estimate the last two terms on the right hand side of this
inequality.%
\begin{equation*}
\begin{array}{r}
\Delta t\sum_{K}(\varepsilon _{K}(c_{h}^{n})\nabla c^{n},\nabla
c^{n})_{K}=C_{\varepsilon }\Delta t\displaystyle\sum_{K}h_{K}^{\alpha
}\int_{K}\frac{\left\vert c_{h}^{n}-c_{h}^{\ast n-1}\right\vert }{\Delta t}%
\left( \nabla c^{n}\right) ^{2}dK \\
\\
\leq C_{\varepsilon }\left\Vert \nabla c^{n}\right\Vert _{L^{\infty }(D)}^{2}%
\displaystyle\sum_{K}h_{K}^{\alpha }\int_{K}\left\vert c_{h}^{n}-c_{h}^{\ast
n-1}\right\vert dK \\
\\
\leq C_{\varepsilon }\left\Vert \nabla c^{n}\right\Vert _{L^{\infty }(D)}^{2}%
\displaystyle\sum_{K}h_{K}^{\alpha }\left\Vert c_{h}^{n}-c_{h}^{\ast
n-1}\right\Vert _{L^{1}(K)} \\
\\
\leq C_{\varepsilon }\left\Vert \nabla c^{n}\right\Vert _{L^{\infty
}(D)}^{2}h^{\alpha }\left\Vert c_{h}^{n}-c_{h}^{\ast n-1}\right\Vert
_{L^{1}(D)}.%
\end{array}%
\end{equation*}%
It remains to estimate the term $\left\Vert c_{h}^{n}-c_{h}^{\ast
n-1}\right\Vert _{L^{1}(D)}$. To do so, we observe that%
\begin{equation*}
c_{h}^{n}-c_{h}^{\ast n-1}=\left( c^{n}-e^{n}\right) -\left( c^{\ast
n-1}-e^{\ast n-1}\right) =e^{n}-e^{\ast n-1}
\end{equation*}%
because $c^{n}=c^{\ast n-1}$, then $\left\Vert c_{h}^{n}-c_{h}^{\ast
n-1}\right\Vert _{L^{1}(D)}=\left\Vert e^{n}-e^{\ast n-1}\right\Vert
_{L^{1}(D)}$ and by virtue of the Cauchy-Schwarz inequality $\left\Vert
e^{n}-e^{\ast n-1}\right\Vert _{L^{1}(D)}\leq C_{D}\left\Vert e^{n}-e^{\ast
n-1}\right\Vert _{L^{2}(D)}$, $C_{D}=C(|D|)$, $|D|$ being the measure of $D$%
; hence
\begin{equation*}
C_{\varepsilon }\left\Vert \nabla c^{n}\right\Vert _{L^{\infty
}(D)}^{1}h^{\alpha }\left\Vert c_{h}^{n}-c_{h}^{\ast n-1}\right\Vert
_{L^{1}(D)}\leq \frac{1}{16}\left\Vert e^{n}-e^{\ast n-1}\right\Vert
_{L^{2}(D)}^{2}+4C_{\varepsilon }^{2}h^{2\alpha }C_{D}^{2}\left\Vert \nabla
c^{n}\right\Vert _{L^{\infty }(D)}^{4}.
\end{equation*}%
Therefore, we can set that
\begin{equation}
\Delta t\sum_{K}(\varepsilon _{K}(c_{h}^{n})\nabla c^{n},\nabla
c^{n})_{K}\leq \frac{1}{16}\left\Vert e^{n}-e^{\ast n-1}\right\Vert
_{L^{2}(D)}^{2}+4C_{\varepsilon }^{2}h^{2\alpha }C_{D}^{2}\left\Vert \nabla
c^{n}\right\Vert _{L^{\infty }(D)}^{4}.  \label{dce5}
\end{equation}%
We estimate now the term $\Delta t\sum_{K}(\varepsilon _{K}(c_{h}^{n})\nabla
\rho ^{n},\nabla \rho ^{n})_{K}$. To this end, we notice that%
\begin{equation*}
\Delta t\sum_{K}(\varepsilon _{K}(c_{h}^{n})\nabla \rho ^{n},\nabla \rho
^{n})_{K}=C_{\varepsilon }\sum_{K}h_{K}^{\alpha }\int_{K}\left\vert
c_{h}^{n}-c_{h}^{\ast n-1}\right\vert \left( \nabla \rho ^{n}\right) ^{2}dK,
\end{equation*}%
but by virtue of (\ref{eq4e})\ we have that $\left\Vert \nabla \rho
^{n}\right\Vert _{L^{\infty }(D)}\leq c_{3}\left\Vert \nabla
c^{n}\right\Vert _{L^{\infty }(D)}$, then we can write that
\begin{equation*}
\Delta t\sum_{K}(\varepsilon _{K}(c_{h}^{n})\nabla \rho ^{n},\nabla \rho
^{n})_{K}\leq c_{3}^{2}C_{\varepsilon }\left\Vert \nabla c^{n}\right\Vert
_{L^{\infty }(D)}^{2}\sum_{K}h_{K}^{\alpha }\int_{K}\left\vert
c_{h}^{n}-c_{h}^{\ast n-1}\right\vert dK;
\end{equation*}

so, arguing as we have just done for the term $\Delta t\sum_{K}(\varepsilon
_{K}(c_{h}^{n})\nabla c^{n},\nabla c^{n})_{K}$ it follows that%
\begin{equation}
\Delta t\sum_{K}(\varepsilon _{K}(c_{h}^{n})\nabla \rho ^{n},\nabla \rho
^{n})_{K}\leq \frac{1}{16}\left\Vert e^{n}-e^{\ast n-1}\right\Vert
_{L^{2}(D)}^{2}+4c_{3}^{4}C_{\varepsilon }^{2}h^{2\alpha}C_{D}^{2}\left\Vert
\nabla c^{n}\right\Vert _{L^{\infty }(D)}^{4}.  \label{dce6}
\end{equation}

Substituting (\ref{dce5}) and (\ref{dce6}) in (\ref{dce4}) and using the
estimate (\ref{eq4e}) yields%
\begin{equation*}
\begin{array}{r}
\left\Vert e^{n}\right\Vert ^{2}+\frac{1}{2}\left\Vert e^{n}-e^{\ast
n-1}\right\Vert ^{2}-\left\Vert e^{n-1}\right\Vert ^{2}+\Delta
t\sum_{K}(\varepsilon _{K}(c_{h}^{n})\nabla e^{n},\nabla e^{n})_{K} \\
\\
\leq C\left( h^{2(m+1)}+C_{\varepsilon }^{2}h^{2\alpha }\right) .%
\end{array}%
\end{equation*}%
where $C$ is a constant that depends on $C_{D}$, $\left\vert
c^{n}\right\vert _{H^{m+1}(D)}$ and $\left\Vert \nabla c^{n}\right\Vert
_{L^{\infty }(D)}$. Summing both terms of this inequality from $n=1$ up to $%
n=N$ it follows that%
\begin{equation*}
\begin{array}{r}
\left\Vert e^{N}\right\Vert ^{2}+\frac{1}{2}\sum_{n=1}^{N}\left\Vert
e^{n}-e^{\ast n-1}\right\Vert ^{2}+\Delta
t\sum_{n=1}^{N}\sum_{K}(\varepsilon _{K}(c_{h}^{n})\nabla e^{n},\nabla
e^{n})_{K} \\
\\
\leq \frac{C}{\Delta t}\left( h^{2(m+1)}+C_{\varepsilon }^{2}h^{2\alpha
}\right) ,%
\end{array}%
\end{equation*}%
or equivalently%
\begin{equation*}
\left\Vert e^{N}\right\Vert \leq C\frac{\left( h^{m+1}+C_{\varepsilon
}h^{\alpha }\right) }{\Delta t^{1/2}}.
\end{equation*}
\end{proof}

\begin{remark}
This estimate depends on $\Delta t^{-1/2}$ so that for $h$ fixed blows up as
$\Delta t\rightarrow 0$. In contrast with the previous LG methods, for the
DC-LG method we have not been able to find an error estimate free from the $%
\Delta t^{-1/2}$ dependence; however, based on numerical experiments and
assuming that the maximum norm stability holds, we may hypothesizes that
there is a $\Delta t_{c}$, such that for $\Delta t\leq \Delta t_{c}$ the
error will not increase, remaining nearly constant or decreasing very
slowly. So, noting that $c_{h}^{n}-c_{h}^{\ast n-1}=e^{n}-e^{\ast n-1}$
because $c^{n}=c^{\ast n-1}$, we can argue that for $\Delta t\leq \Delta
t_{c}$%
\begin{equation*}
\max_{n}\max_{K}\left. \frac{\left\vert e^{n}-e^{\ast n-1}\right\vert }{%
\Delta t}\right\vert _{K}=\beta ,
\end{equation*}%
$\beta $ being a small constant that depends on $m$; hence, we can consider
that $\varepsilon _{K}(c_{h}^{n})$ is a constant, specifically, for all $K$
and $n$ we set
\begin{equation*}
\nu :=\varepsilon _{K}(c_{h}^{n})=C_{\varepsilon }\beta h^{\alpha }.
\end{equation*}%
Then, the error equation can be written now as%
\begin{equation}
\left( e^{n}-e^{\ast n-1},v_{h}\right) -\Delta t\nu \sum_{K}(\nabla
c_{h}^{n},\nabla v_{h})_{K}=0.  \label{rem1}
\end{equation}%
So, as we have done above, we let%
\begin{equation*}
v_{h}=\theta _{h}^{n},\ e^{n}=\rho ^{n}+\theta _{h}^{n},\ e^{\ast n-1}=\rho
^{\ast n-1}+\theta _{h}^{\ast n-1}\ \mathrm{and\ }c_{h}^{n}=c^{n}-(\rho
^{n}+\theta _{h}^{n}),
\end{equation*}%
with $\rho ^{n}=c^{n}-P_{h}c^{n}$, and recast (\ref{rem1}) as%
\begin{equation}
\begin{array}{r}
\left( \theta _{h}^{n}-\theta _{h}^{\ast n-1},\theta _{h}^{n}\right) +\Delta
t\nu \nabla \theta _{h}^{n},\nabla \theta _{h}^{n})=-(\rho ^{n}-\rho ^{\ast
n-1},\theta _{h}^{n}) \\
\\
-\Delta t\nu (\nabla \rho ^{n},\nabla \theta _{h}^{n})+\Delta t\nu (\nabla
c^{n},\nabla \theta _{h}^{n}).%
\end{array}
\label{rem2}
\end{equation}%
If we compares this equation with (\ref{11}), we can consider that the
artificial dissipation terms represent a perturbation to the equation of the
pure advection problem, so, we can expect that when $\nu \rightarrow 0$ (\ref%
{rem2}) will yield the same estimate as (\ref{11}). To check that this is
the case, we bound the terms $-(\rho ^{n}-\rho ^{\ast n-1},\theta _{h}^{n})$%
, $(\nabla \rho ^{n},\nabla \theta _{h}^{n})$ and $(\nabla c^{n},\nabla
\theta _{h}^{n})$ as we have done many times before and can easily arrive to
the estimate%
\begin{equation*}
\left\Vert \theta _{h}^{N}\right\Vert \leq C\left( h^{m}+\nu ^{1/2}\right) ,
\end{equation*}%
where the constant \ $C$ depends on $\left\vert c\right\vert _{L^{\infty
}(0,T;H^{m+1}(D))}$, and consequently,%
\begin{equation*}
\left\Vert e^{N}\right\Vert =O\left( h^{m}+\left( C_{\varepsilon }\beta
h^{\alpha }\right) ^{1/2}\right) .
\end{equation*}%
So, if $C_{\varepsilon }\beta $ is so small that $\left( C_{\varepsilon
}\beta h^{\alpha }\right) ^{1/2}\leq h^{m}$, then $\left\Vert
e^{N}\right\Vert =O(h^{m})$.
\end{remark}

\subsection{Numerical tests with the DC-LG method}

Since the method is designed to deal with discontinuous initial conditions,
we shall perform two numerical tests. The first one is again the hump
problem to see wether the error behaves according to Theorem \ref{teorema3};
the second test uses as initial condition the so called ``slotted''
cylinder, this a typical initial condition to study the ability of schemes
to deal with strong discontinuities.

\subsubsection{The hump test}

We run the test under the same conditions as the numerical test for the
conventional LG method. We show in Figures \ref{figure:4} and \ref{figure:5}
the results for the meshes with mesh parameter $h=0.05$ and $h=0.025$ after
one revolution, and with the constants $C_{\varepsilon}$ and $\alpha$ of the
expression for the artificial diffusivity (\ref{n2}) taking the values $%
C_{\varepsilon}=0.01$, $C_{\varepsilon}=0.1$ and $\alpha=\frac{3}{2}$. These
results must be compared with those of Figure \ref{figure:2}.

\begin{figure}[ht!]
\begin{center}
\scalebox{0.5}{\includegraphics{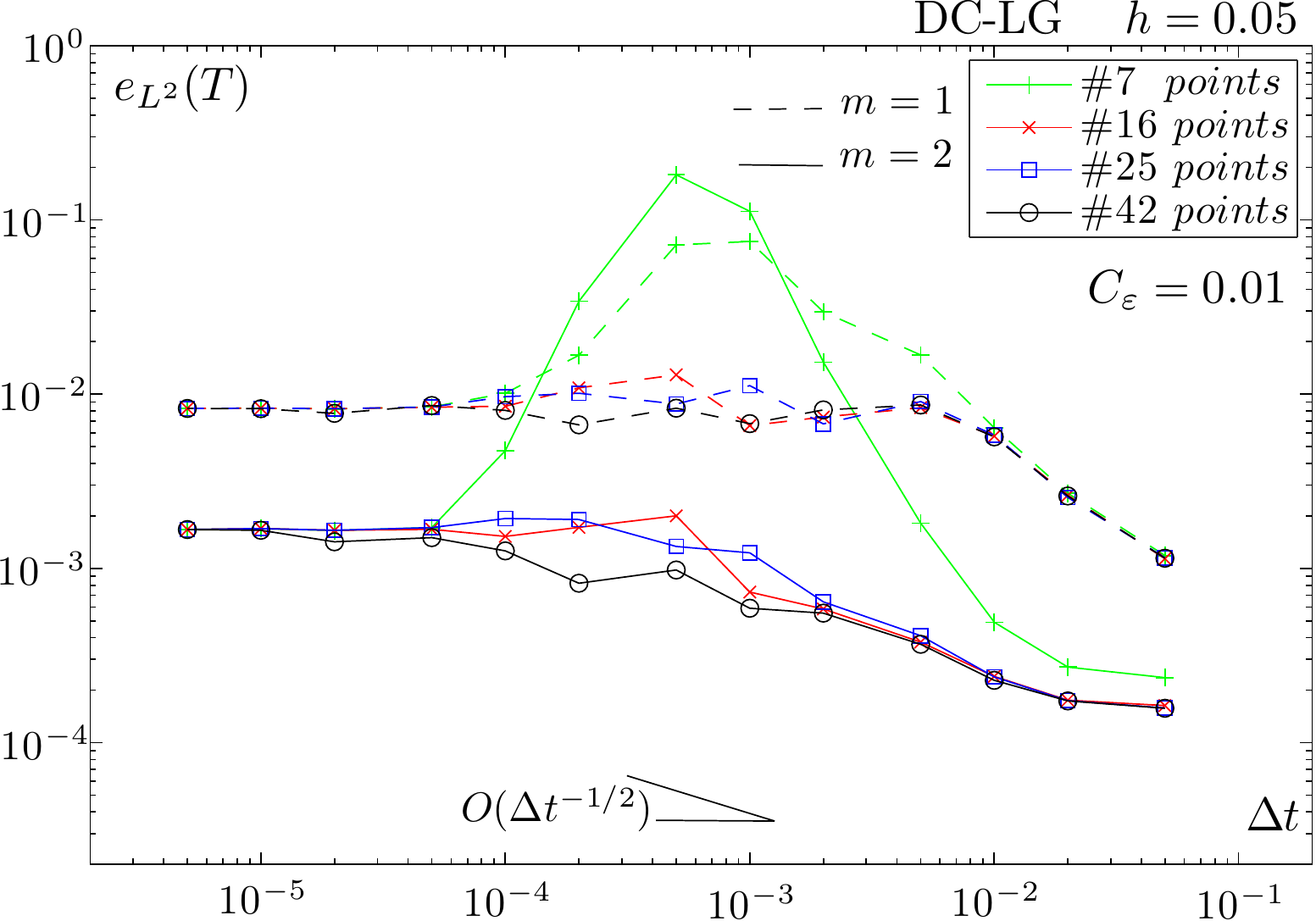}}\ \ \ %
\scalebox{0.5}{\includegraphics{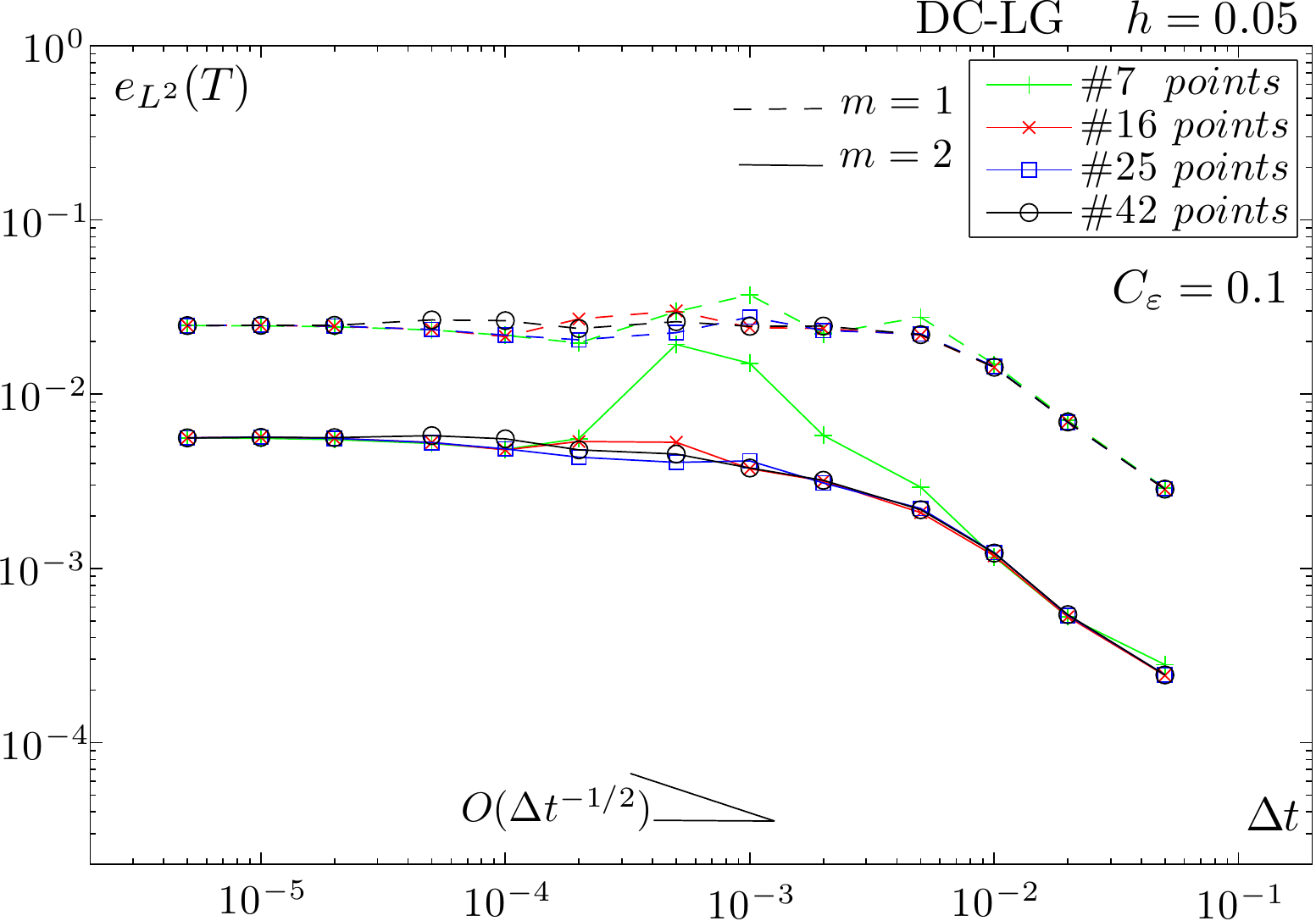}}
\end{center}
\caption{$L^2$-error norm with the DC-LG method in the rotating hump problem
for $h=0.05$}
\label{figure:4}
\end{figure}

We notice the following facts: (a) For high order quadrature rules, the
error of the DC-LG method shows a similar, but smoother, behavior as the
error of the conventional LG method, with the feature that the higher the
constant $C_{\varepsilon }$ or the coarser the mesh the smoother the profile
of the error curves; this is a consequence of the nonlinear artificial
diffusivity that depends on both $C_{\varepsilon }$ and $h$. (b) For the low
order quadrature rule of 7 points, the DC-LG method loses accuracy for those
values $\Delta t$ for which the conventional LG method is inaccurate or even
unstable; in fact, for $C_{\varepsilon }=0.01$ and $h=0.025$ there is an
interval of values $\Delta t$, which, roughly speaking, corresponds with
those values for which the conventional LG method with quadratic polynomials
becomes unstable, in which the DC-LG method with quadratic polynomials is
less accurate than with linear polynomials. This can be explained because
when both $C_{\varepsilon }$ and $h$ are low the artificial diffusivity is
not sufficiently strong to prevent the instability. (c) Roughly speaking, we
can say that the higher the artificial viscosity the less sensitive the
DC-LG method is to the order of the quadrature rules, provided that such
rules are exact for polynomials of degree $2m$. (d) For $\Delta t=O(h)$ or $%
\Delta t=o(h^{2})$, all the quadrature rules give about the same solution.
This means that in those ranges of values $\Delta t$ it is not necessary the
use of high order quadrature rules, just a rule which is exact for
polynomials of degree $2(m+1)$ would suffice. (e) Looking at the profiles of
the error curves, we notice that for high order quadrature rules the error
behaves as Theorem \ref{teorema3} says, that is, there is a value $\Delta
t_{c}$ (in this test, $\Delta t_{c}=O(h^{-2})$), such that for $\Delta t\geq
\Delta t_{c}$ the error is $O(h^{m+1}+C_{\varepsilon }h^{\alpha })/\Delta
t^{1/2}$. However, for $\Delta t<\Delta t_{c}$, the error does not grow and
remains more or less constant, particularly as the artificial diffusivity is
high enough, see Figure \ref{figure:4}. Finally, fixing the mesh and the
parameter $\alpha $, this test shows that as the constant $C_{\varepsilon }$
becomes smaller and smaller, the DC-LG solution approaches the solution of
conventional LG method.

\begin{figure}[th!]
\begin{center}
\scalebox{0.5}{\includegraphics{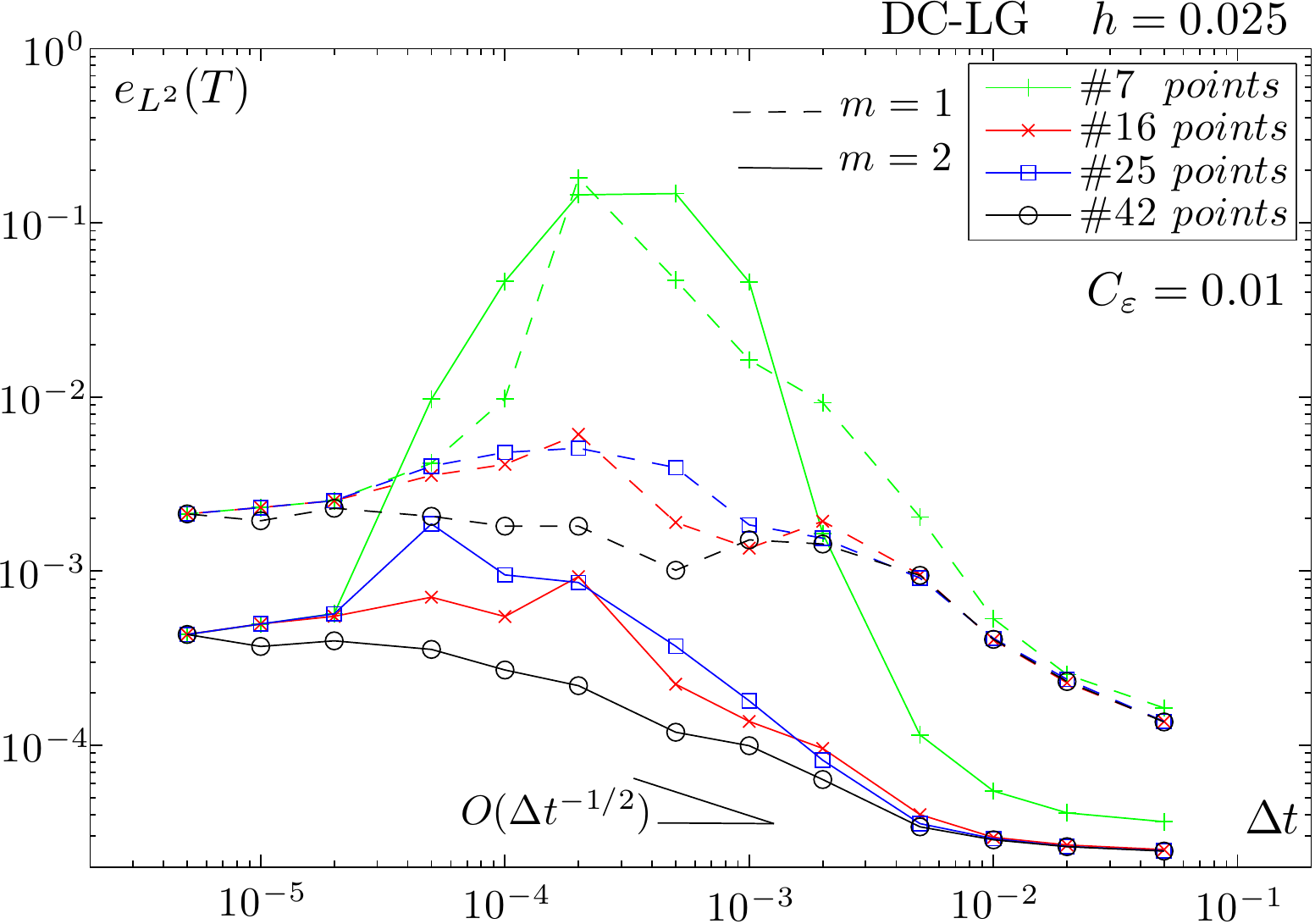}}\ \ \ %
\scalebox{0.5}{\includegraphics{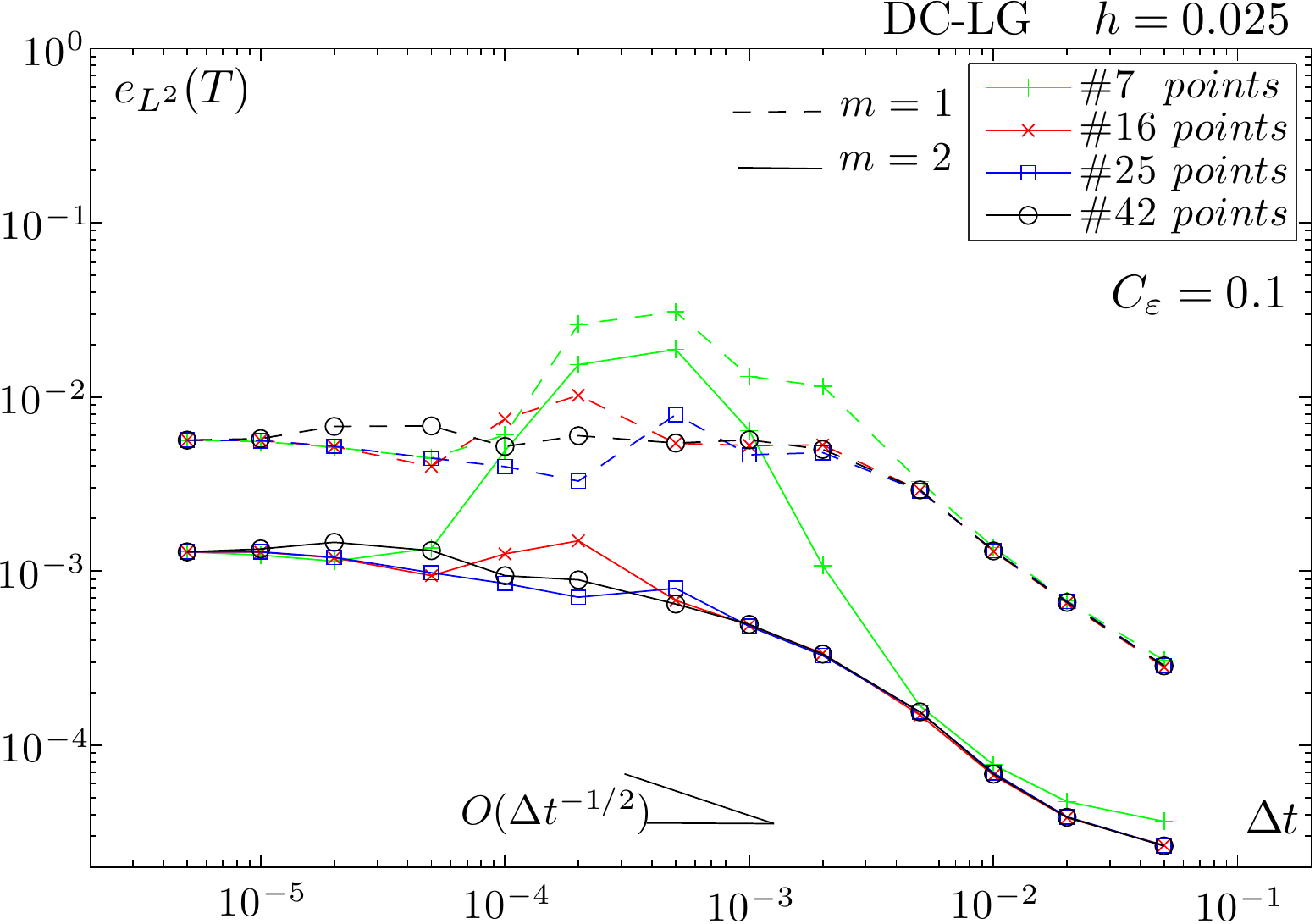}}
\end{center}
\caption{$L^2$-error norm with the DC-LG method in the rotating hump problem
for $h=0.025$}
\label{figure:5}
\end{figure}


\subsubsection{The slotted cylinder}

Our second test is the so called slotted cylinder. The idea behind this test
is to assess the ability of the DC-LG method to deal with strong
discontinuities; specifically, we wish to see how the scheme smears out an
initial condition that is strongly discontinuous. The domain $%
D:=[-1,1]\times \lbrack -1,1]$, the velocity field $\mathbf{u}$ is the same
as in the previous tests, i.e., $\mathbf{u}=2\pi (-x_{2},x_{1})$, and the
initial condition is a cylinder of height 1 and radius 0.25 centered at
(0.5,0), with a slot along the plane $x=0.5$ of width 0.1 and depth 0.35.
The simulations are carried out with a time step $\Delta t=0.01$ in the mesh
with mesh parameter $h=0.025$, and the numerical initial condition being
computed by the $L^{2}$-projection onto the finite element space $V_{h}$.
Although we are aware that this is not a good way to calculate the numerical
initial condition because, as we see in Figure \ref{figure:7}, some
overshoots and undershoots are generated by the $L^{2}$-projection, we have
left it to test the capability of DC-LG method to suppress the wiggles; it
is clear that the method is able to kill them out after few time steps when
the constant $C_{\varepsilon }$ of the artificial diffusion $\varepsilon
_{h}(c_{h}^{n})$ is $C_{\varepsilon }=0.1$. A better approach to calculate
the numerical initial condition would have been to perform $L^{2}$%
-projection of the exact initial condition with linear elements and lumped
mass matrix, yielding this way a somewhat smoother initial condition. The
integration time $T=1$. For the results, we have used the exact trajectories
and the integrals (\ref{gal_proj}) have been calculated with the quadrature
rule of 16 points. Based on the results of the hump test, we know that for
the values $\Delta t=0.01$ and $h=0.025$ the solution is not sensitive to
the order of the quadrature rules used to approximate the integrals (\ref%
{gal_proj}), provided that the rule is exact for polynomials of degree $%
>2(m+1)$.

\begin{figure}[th!]
\begin{center}
\scalebox{0.6}{\includegraphics{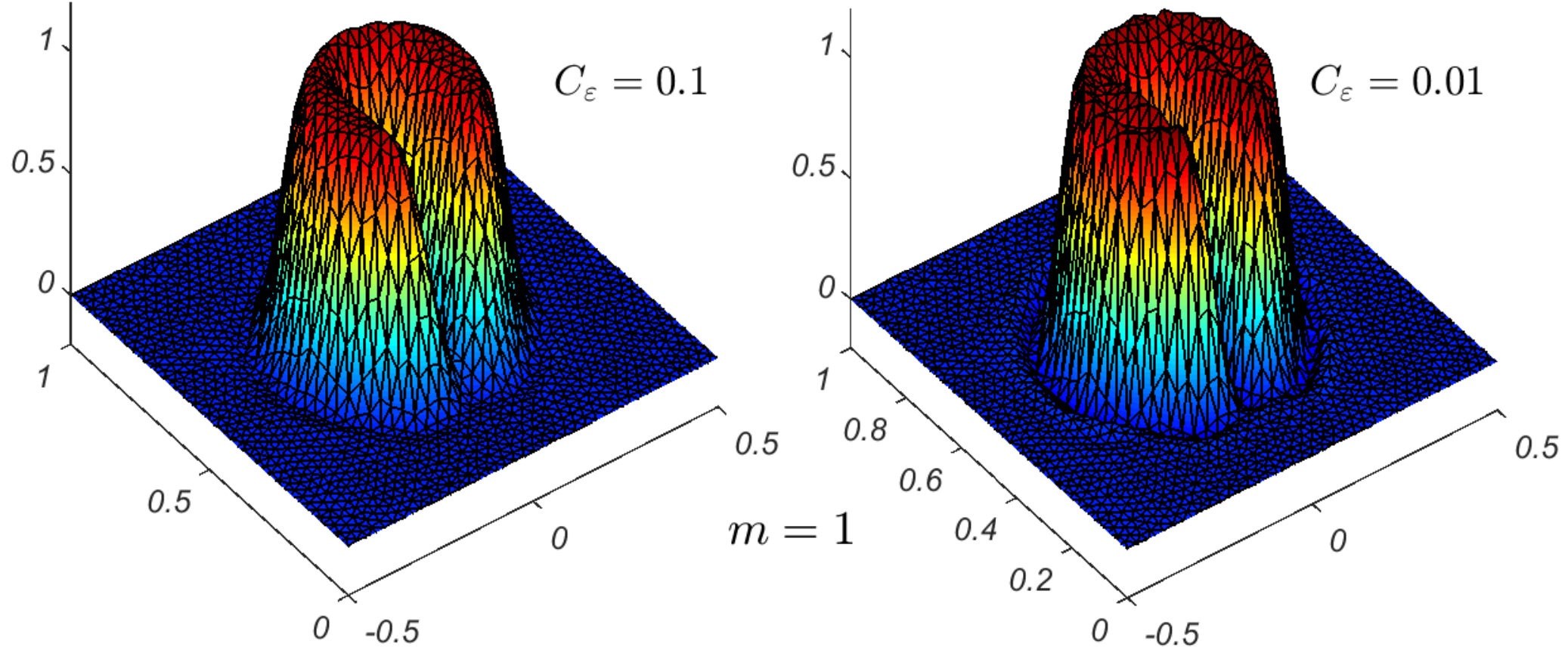}} \scalebox{0.5}{%
\includegraphics{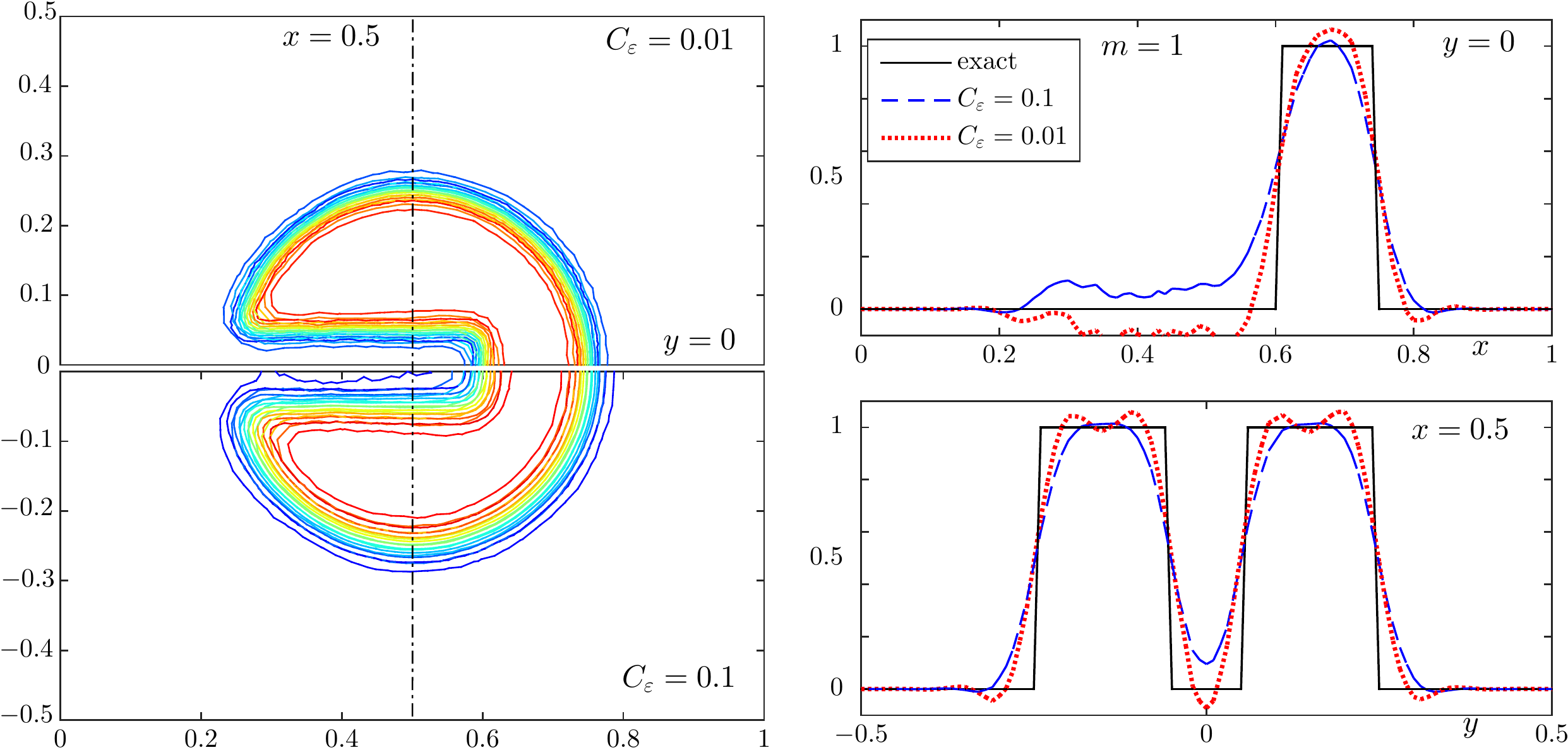}}
\end{center}
\caption{Slotted cylinder after one revolution for linear finite elements $%
m=1$. Upper panel: three dimensional view of the solutions. Lower panel: the
level lines (on the left) and cross sections (on the right) that correspond
with the figures of the upper panel.}
\label{figure:6}
\end{figure}


We display in the upper panel of Figure \ref{figure:6} a three dimensional
view of the cylinder after one revolution, whereas in the low panel are
represented the level lines (on the left) and cross sections (on the right)
when the constant of the artificial diffusion takes the values $%
C_{\varepsilon }=0.1$ and $C_{\varepsilon }=0.01$. This solution has been
calculated with linear polynomials ($m=1$). We notice that the width of the
upper face of the lobes and the width of the \textquotedblleft
bridge\textquotedblright\ as well as the depth of the slot are reasonably
well preserved for both constants $C_{\varepsilon }=0.1$ and $C_{\varepsilon
}=0.01$. It is worth remarking that the figures of the upper and middle
panel with $C_{\varepsilon }=0.1$ compare very well with those obtained in
\cite{john} and \cite{Hans} applying the shock-capturing
streamline-diffusion method with a time step $\Delta t=0.01$ and the mesh
size $h=0.01$, which is $2.5$ times smaller than the one we use. It is clear
that with the constant $C_{\varepsilon }=0.1$ the DC-LG method introduces a
major degree of smearing, and when $C_{\varepsilon }=0.01$ the method is not
able to suppress the wiggles generated around the discontinuities at the
first time step.

Similar representations of the numerical solution calculated with quadratic
polynomials (m=2) are displayed in Figure \ref{figure:7}. If we compare
these graphs with those of Figure \ref{figure:6} one sees that it is clear
the improvement of the numerical solution calculated with quadratic
polynomials; for instance, the slopes of the cylinder sides, the width of
the lobes of the upper face and the width of the ``bridge'' are much better
represented with quadratic elements than with linear elements.

\begin{figure}[th!]
\begin{center}
\scalebox{0.6}{\includegraphics{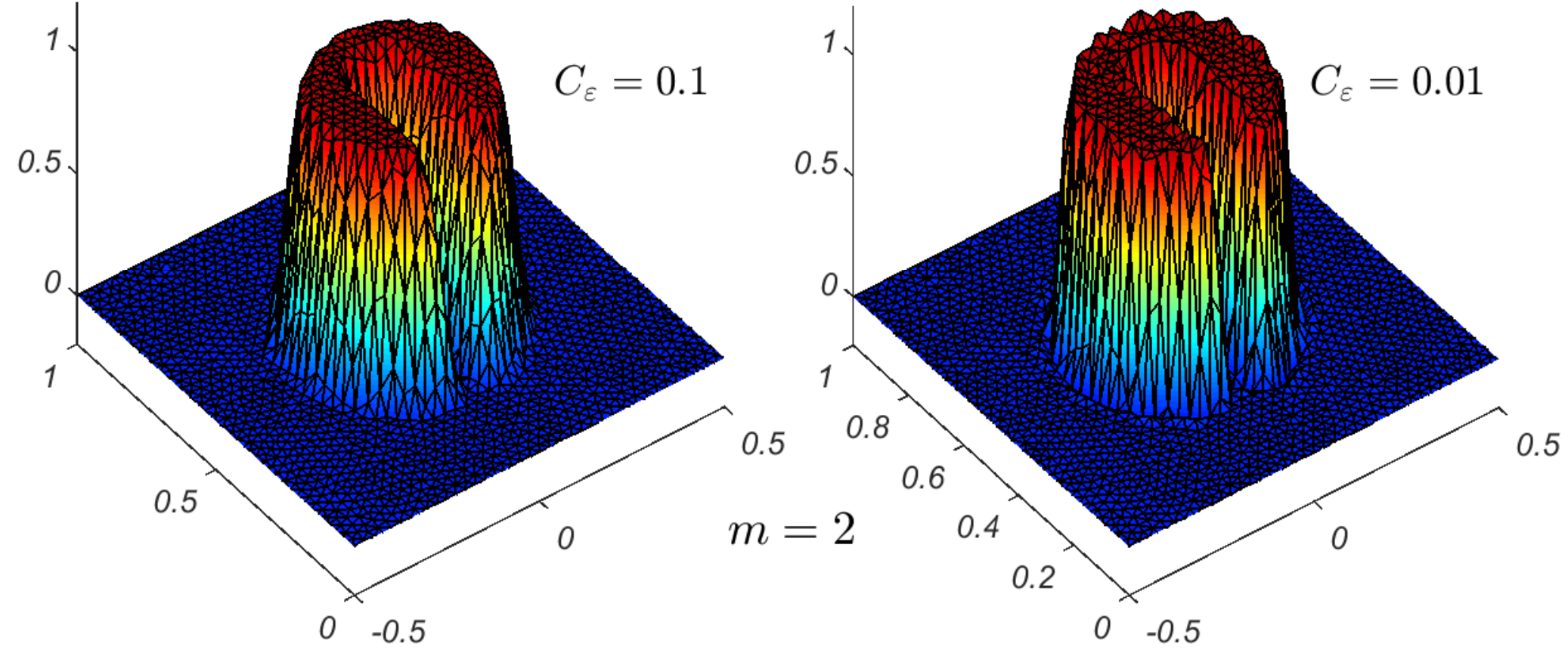}} \scalebox{0.5}{%
\includegraphics{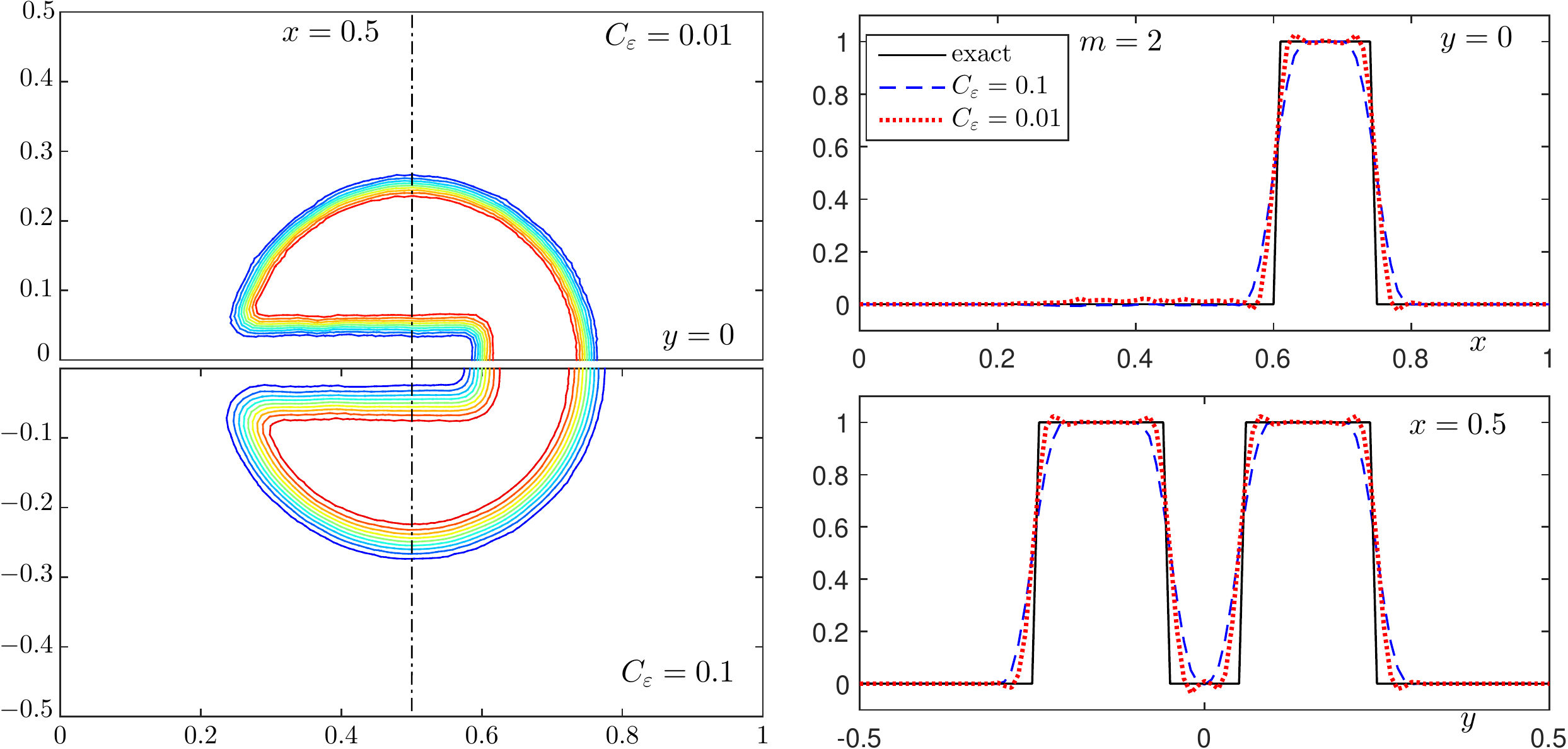}}
\end{center}
\caption{Slotted cylinder after one revolution for quadratic finite elements
$m=2$. Upper panel: three dimensional view of the solutions. Lower panel:
the level lines (on the left) and cross sections (on the right) that
correspond with the figures of the upper panel.}
\label{figure:7}
\end{figure}


Finally, we represent in Figure \ref{figure:8} the time evolution of the
maximum and minimum of the numerical solutions obtained by the conventional
LG method, and the DC-LG one (with $C_{\varepsilon }=0.01$ and $%
C_{\varepsilon }=0.1$). As we commented above, our calculation of the
numerical initial condition allows the generation of wiggles at the first
time step, in fact, the largest amplitude of such wiggles is $0.3$. The
DC-LG method with $C_{\varepsilon }=0.1$ dissipates these wiggles as the
solution progresses, such that the for $m=2$ the dissipation is very strong
at the beginning, going very quickly the minimum to zero and the maximum to
1, as, on the other hand, should be; however, when $C_{\varepsilon }=0.01$,
the wiggles are also dissipated, but at a slower rate, with the amplitudes
of the minimum and maximum values decreasing somewhat oscillatorily, tending
to $-0.05$ and $1.05$ respectively. However, though we proof that linear
polynomials are stable in the maximum norm, the behavior of the maximum and
minimum is not as good as that of quadratic elements; for instance, when $%
C_{\varepsilon }=0.1$ the dissipation of the amplitude of the wiggles is
slower and less strong than in the case of quadratic elements, noting that
the steady maximum and minimum are $1.03$ and $-0.03$ respectively; when $%
C_{\varepsilon }=0.01$ the maximum and minimum of DC-LG solution, though
smaller in amplitude, exhibit a similar oscillatory behavior as those of the
conventional LG method. It is remarkable that both the maximum and the
minimum of the conventional LG method, either with $m=1$ or $m=2$, undergo
dissipation at the beginning of the calculations and then go on exhibiting
an oscillatory behavior.

\begin{figure}[th!]
\begin{center}
\scalebox{0.46}{\includegraphics{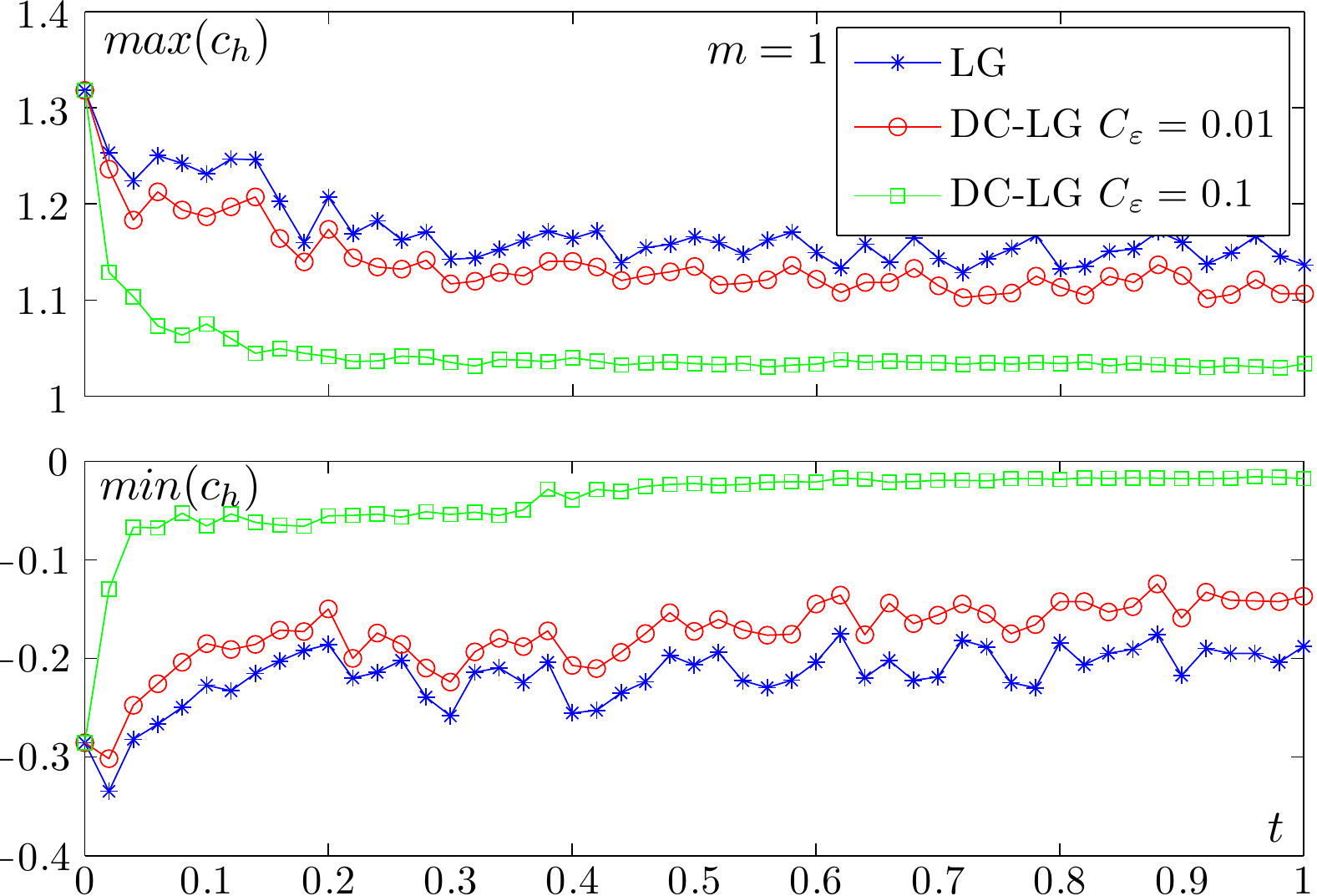}}\ \ \ %
\scalebox{0.46}{\includegraphics{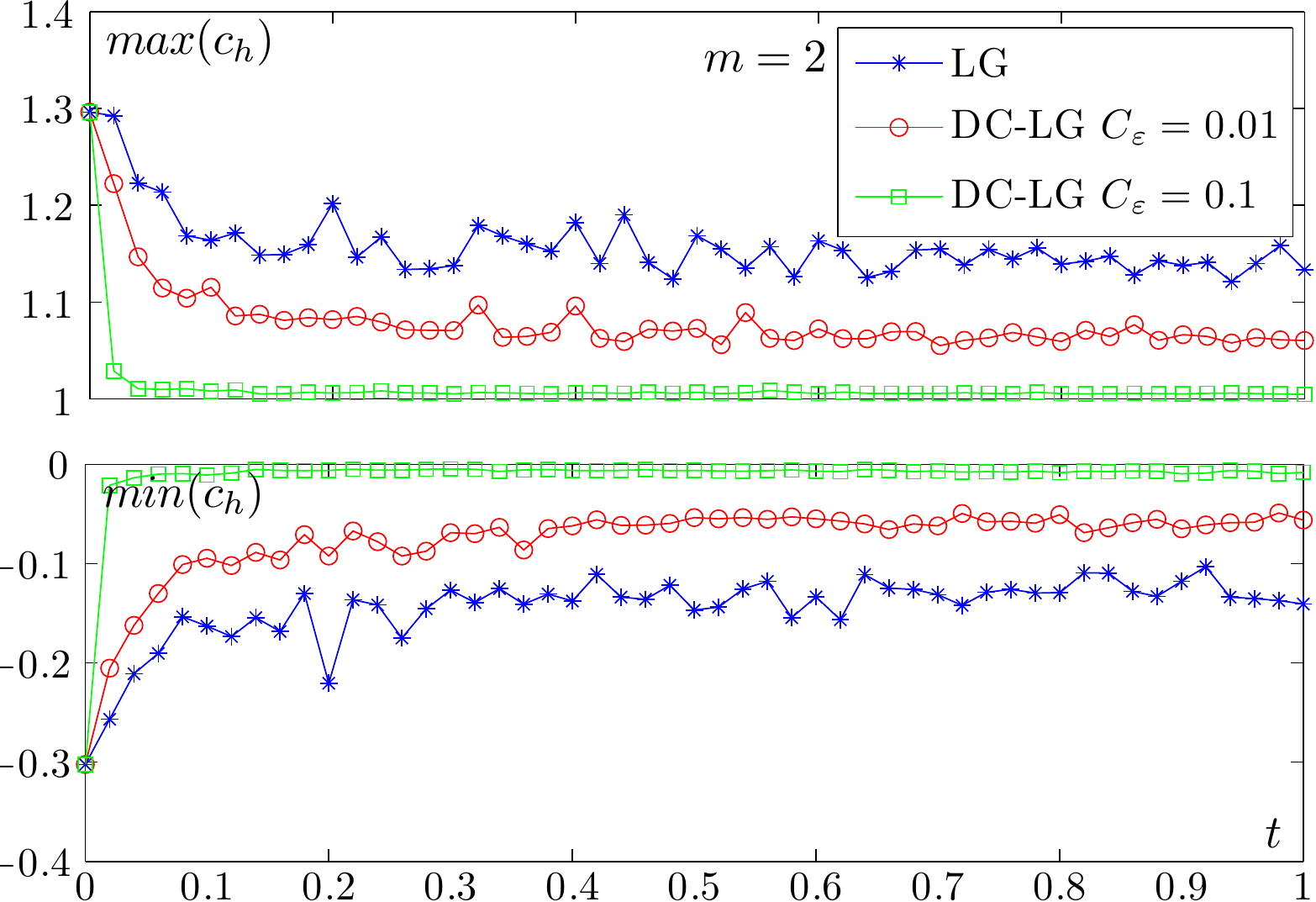}}
\end{center}
\caption{Evolution with time of the maximum and minimum of the slotted
cylinder during one revolution for linear $m=1$ and quadratic $m=2$ finite
elements}
\label{figure:8}
\end{figure}

\section{Concluding remarks}

1) We have obtained a new error estimate of the conventional LG method for
the advection equation. In contrast with previous estimates, ours is valid
for all $\Delta t$, no matter how small $\Delta t$ is, showing that for $%
\Delta t\leq Kh^{p},\ p>2$, the error is $O(h^{m})$, and for $\Delta
t>Kh^{p} $ the error is $O({h^{m+1}}/{\Delta t^{1/2}})$, here $K=(\left\Vert
\mathbf{u}\right\Vert _{L^{\infty }(L^{\infty }(D)^{d})})^{-1/2}$. This
error estimate has been obtained under the assumption that the integrals $%
\int_{K}\phi _{j}(X_{h}(x,t_{n+1},t_{n}))\phi _{i}(x)dx$ are calculated
exactly. 2) To validate our theoretical result we perform numerical tests
using quadrature rules of different orders to evaluate those integrals and
calculating exactly the trajectories. We find that the higher the order of
the quadrature rule the closer the error behavior to the theoretical one.
Other interesting finding is that for $\Delta t=O(h)$ and $\Delta t=O(h^{3})$%
, the error is quite independent of the order of the quadrature rule as long
as the rule calculates exactly polynomials of degree $\geq 2(m+1)$. 3) The
LG approach is a natural way of introducing upwinding in the numerical
method, but the degree of upwinding is not strong enough if the initial
condition lacks regularity. One way of stabilizing the conventional LG
method is using the so called local projection stabilization technique,
which is symmetric and acts on the small unresolved scales. We thus obtain
the so called LPS-LG method and estimate its error in a mesh dependent norm.
4) Neither the LPS-LG nor the conventional LG methods are stable in the
maximum norm, so they do not deal satisfactorily with strongly discontinuous
initial conditions. Following the idea of shock-capturing characteristic
streamline-diffusion method of \cite{john}, we have formulated the DC-LG
method that is a residual stabilized LG method, which for linear finite
elements is stable in both the $L^{2}$- and $L^{\infty }$-norms. This method
has shown to be effective in preserving the shape of the initial condition,
in particular, when quadratic elements are used, though there is no
theoretical proof of the stability in the infinite norm for these elements.
Finally, we must say that this dependence of the error behavior on the CFL
number of the LG methods is not exclusive for the pure advection problem, it
can also be proven for advection-dominated and NS problems, see \cite{BS1},
\cite{BS2} and \cite{BS3}.



\section*{Acknowledgements}

This research has been partially funded by grant PGC-2018-097565-B100 of
Ministerio de Ciencia, Innovaci\'{o}n y Universidades of Spain and of the
European Regional Development Fund.

\end{document}